\theoremstyle{plain}
\newtheorem{theorem}{Theorem}[section]
\newtheorem{corollary}[theorem]{Corollary}
\newtheorem{lemma}[theorem]{Lemma}
\newtheorem{proposition}[theorem]{Proposition}
\theoremstyle{definition}
\newtheorem{remark}[theorem]{Remark}
\numberwithin{equation}{section}
\numberwithin{figure}{section}
\numberwithin{table}{section}
\DeclareMathOperator{\PP}{\operatorname{\mathbb P}}
\newcommand{\C}[0]{\mathbb C}
\newcommand{\N}[0]{\mathbb N}
\tikzstyle{fleche}=[>=stealth', postaction={decorate}, thick]
\tikzstyle{axis}=[->, >=stealth', thick, gray]
\tikzstyle{path}=[->, >=stealth', thick]
\tikzstyle{grille}=[dotted, gray]
\colorlet{lgray}{white!85!black}
\colorlet{lred}{white!65!red}
\colorlet{lblue}{white!75!blue}
\title[]{Periodic $q$-Whittaker and Hall--Littlewood processes}
\author{Jimmy He}
\address{Department of Mathematics, MIT, Cambridge, MA 02139}
\email{jimmyhe@mit.edu}
\author{Michael Wheeler}
\address{School of Mathematics and Statistics, University of Melbourne, Victoria 3010, Australia}
\email{wheelerm@unimelb.edu.au}
\keywords{}
\date{\today}
\begin{document}
\maketitle

\begin{abstract}
We study the periodic $q$-Whittaker and Hall--Littlewood processes, two probability measures on sequences of partitions. We prove that a certain observable of the periodic $q$-Whittaker process exhibits a $(q,u)$ symmetry after a random shift, generalizing a previous result of Imamura, Mucciconi, and Sasamoto who showed a matching between the periodic Schur and $q$-Whittaker measures, and also give a vertex model formulation of their result. As part of our proof of the $(q,u)$ symmetry, we obtain contour integral formulas for both the periodic $q$-Whittaker and Hall--Littlewood processes. We also show a matching between certain observables in the periodic Hall--Littlewood process and in a quasi-periodic stochastic six vertex model after a suitable random shift, and discuss a limit to the stationary periodic stochastic six vertex model.
\end{abstract}

\section{Introduction}
There has been considerable success in studying integrable models in the KPZ universality class via probability measures on partitions or sequences of partitions defined in terms of symmetric functions. These include the Macdonald process, introduced by Borodin and Corwin \cite{BC14}, as well as its degenerations, which are related to certain integrable probabilistic models without a boundary including models of particle systems, random growth, directed polymers, and the KPZ equation. These measures on partitions have been extremely useful in obtaining asymptotics in these models by allowing for the computation of exact formulas, see e.g. \cite{BC14,CD18,BCF14,BCS14}.

There has also been considerable success in studying models with one open boundary via the half space Macdonald process and its degenerations, introduced by Barraquand, Borodin, and Corwin \cite{BBC20}. However, while exact formulas for certain observables were found, until the recent work of Imamura, Mucciconi, and Sasamoto \cite{IMS21, IMS22}, asymptotic results were restricted to zero temperature models where Pfaffian formulas were available except in very restricted cases \cite{BBCW18}. Their work \cite{IMS21} found a relationship between certain degenerations of the Macdonald and half space Macdonald processes, known as $q$-Whittaker and half space $q$-Whittaker processes, and two other objects called the periodic and free boundary Schur processes respectively. 

These periodic and free boundary models are also of independent interest. The periodic Schur process was introduced by Borodin \cite{B07}, and the free boundary Schur process by Betea, Bouttier, Nejjar, and Vuletic \cite{BBNV18}. Let us also mention that the periodic Macdonald process was introduced by Koshida \cite{K21}, who found formulas for moments of certain observables. These processes should be related to probabilistic models with either periodic or two open boundaries, but this relationship is not as clearly understood as for the full and half space models, and currently connections are only known in the Schur case.

In this paper, we study two degenerations of the periodic Macdonald process: the periodic $q$-Whittaker process, and the periodic Hall--Littlewood process. Our main results follow two threads: understanding the matching between $q$-Whittaker and periodic Schur measures, and understanding the relationship between periodic measures on partitions and probabilistic models with periodicity.

On the one hand, we generalize a result of Imamura, Mucciconi, and Sasamoto \cite{IMS21} on the relationship between the $q$-Whittaker and periodic Schur processes, by showing that it lifts to a statement about distributional symmetries in the two parameters of certain observables of the periodic $q$-Whittaker process after a random shift. As part of the proof, we actually give explicit contour integral formulas for the distribution function of this observable after the random shift. We also give a vertex model interpretation of the special case proved in \cite{IMS21}. We hope that both results will help shed light on the nature of these unexpected relationships between different measures on partitions and potentially lead to new asymptotic results. 

We also study the periodic Hall--Littlewood process, showing that certain observables can be matched to observables of a quasi-periodic six vertex model, generalizing the known relationship between the usual six vertex model and the Hall--Littlewood process found by Borodin, Bufetov, and the second author \cite{BBW16}. We also explain the relationship to the periodic six vertex model started from stationarity. There has been recent interest in understanding models in the KPZ universality class with periodic boundary conditions including particle systems \cite{BL16,L18,BL21,BL19}, polymers \cite{GK23b}, and the KPZ equation \cite{DGK23, GK23}. We hope that our results will lead to applications in studying a periodic six vertex model.

We now more formally state our main results. Throughout this paper, we will use $u$ to denote the winding fugacity of our periodic processes, and $q$ and $t$ to denote the remaining parameter in the $q$-Whittaker and Hall--Littlewood processes respectively, to match the conventions for Macdonald polynomials.

\subsection{Symmetries of periodic \texorpdfstring{$q$}{q}-Whittaker measures}
Let $q,u\in [0,1)$, and let $a$ and $b$ be positive specializations of the $q$-Whittaker functions $P_{\lambda/\mu}(x;q,0)$. We define the \emph{periodic $q$-Whittaker measure} to be the following measure on partitions:
\begin{equation*}
    \PP(\lambda)=\frac{1}{\Phi(a,b;q,0,u)}\sum_{\mu} u^{|\mu|}P_{\lambda/\mu}(a;q,0)Q_{\lambda/\mu}(b;q,0),
\end{equation*}
where $\Phi(a,b;q,0,u)$ is a normalization constant and $Q_{\lambda/\mu}(x;q,0)$ denotes the dual $q$-Whittaker function. If $a=(a_1,\dotsc)$ and $b=(b_1,\dotsc)$, then $\Phi(a,b;q,0,u)=\frac{1}{(u;u)_\infty}\prod_{i,j}\frac{1}{(a_ib_j;q,u)_{\infty}}$, where $(z;q,u)_\infty=\prod_{k,l\geq 0}(1-q^ku^lz)$. We say that $\chi$ is a $q$-geometric random variable if $\PP(\chi=n)=q^n\frac{(q;q)_\infty}{(q;q)_n}$. We now give the first main result of this paper.

\begin{theorem}
\label{thm: qt sym}
Let $\lambda$ be distributed according to the periodic $q$-Whittaker measure, and let $\chi$ be an independent $q$-geometric random variable. Then $\PP(\lambda_1+\chi\leq n)$ is symmetric in $q$ and $u$ for all $n$.
\end{theorem}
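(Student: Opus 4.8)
The plan is to reduce the claimed symmetry to one that is visible in an exact contour integral formula for a multiplicative observable of $\lambda_1$. The first step is to remove the auxiliary randomness. Since $\chi$ is $q$-geometric, a short induction using $\sum_{k=0}^{m}q^k/(q;q)_k=1/(q;q)_m$ gives
\[
\PP(\chi\le m)=(q;q)_\infty\sum_{k=0}^{m}\frac{q^k}{(q;q)_k}=\frac{(q;q)_\infty}{(q;q)_m}=(q^{\,m+1};q)_\infty
\]
for every $m\in\Z$, the right-hand side vanishing automatically when $m<0$. Conditioning on $\lambda$ then yields
\[
\PP(\lambda_1+\chi\le n)=\E_{\lambda}\big[(q^{\,n+1-\lambda_1};q)_\infty\big],
\]
so it suffices to prove that this expectation, taken under the periodic $q$-Whittaker measure with parameters $(q,u)$, is invariant under $q\leftrightarrow u$. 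The role of the shift is now transparent: it replaces the indicator $\mathbf 1_{\lambda_1\le n}$, which depends on $q$ only through the structure of the measure, by the factor $(q^{\,n+1-\lambda_1};q)_\infty$, which carries exactly the extra $q$-dependence needed to put $q$ on the same footing as the fugacity $u$.

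Next I would derive a contour integral formula for $\E_{\lambda}[(q^{\,n+1-\lambda_1};q)_\infty]$. Starting from the definition $\PP(\lambda)=\Phi(a,b;q,0,u)^{-1}\sum_\mu u^{|\mu|}P_{\lambda/\mu}(a;q,0)Q_{\lambda/\mu}(b;q,0)$, I would proceed in two stages. For a fixed inner partition $\mu$, skew $q$-Whittaker Cauchy identities reduce the $\lambda$-sum to the same edge observable for an ordinary ($u=0$) $q$-Whittaker measure, which has a known contour integral representation; in view of the non-periodic matching of Imamura--Mucciconi--Sasamoto this representation already has the shape of an observable of a periodic Schur measure with winding parameter $q$. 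One then resums the geometric-in-$u$ series over $\mu$, the resummation being engineered to reconstruct the doubly-infinite $q$-Pochhammer symbols $(z;q,u)_\infty=\prod_{k,l\ge0}(1-q^ku^lz)$ and the theta-type prefactor $1/(u;u)_\infty$ that already appear in the normalization $\Phi$. The output should be a (double) contour integral — equivalently a Fredholm determinant with a theta-deformed kernel — whose integrand depends on $q$ and $u$ only through such combinations, with contours that may be chosen symmetrically.

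With such a formula in hand, the theorem follows by inspection: exchanging $q\leftrightarrow u$ merely permutes the factors inside each $(\cdot\,;q,u)_\infty$ and swaps the two theta prefactors, leaving the integral unchanged, which is the asserted symmetry. A cleaner packaging is likely available: the same contour integral also represents an observable of the periodic Hall--Littlewood process with the roles of the Macdonald parameter and the winding fugacity interchanged, so that the contour formula for the periodic Hall--Littlewood process directly supplies the $(q,u)$-swapped representation; comparing the two gives Theorem~\ref{thm: qt sym}. This also explains why one wants contour formulas for \emph{both} periodic processes along the way.

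The main obstacle is the resummation over the inner partition $\mu$ in the second step. One must show that the $u=0$ edge formula depends on $\mu$ only through a product over the parts of $\mu$ of an explicit rational factor, so that $\sum_\mu u^{|\mu|}(\cdots)$ telescopes into the desired quotient of $(\cdot\,;q,u)_\infty$'s, and one must control convergence of the $u$-series and of the resulting integral (together with trace-class bounds, if it is written as a Fredholm determinant) throughout the periodic regime $q,u\in[0,1)$. A subsidiary difficulty is ensuring the base-case $q$-Whittaker edge formula is available in a sufficiently uniform and flexible form; if that is awkward, one can instead derive the contour integral directly from the periodic measure using Macdonald-type difference operators adapted to the winding, in the spirit of Koshida's analysis of the periodic Macdonald process.
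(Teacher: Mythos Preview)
Your opening reduction is correct and matches the paper: using $\PP(\chi\le m)=(q;q)_\infty/(q;q)_m$ one rewrites the quantity of interest as
\[
\frac{1}{(q;q)_\infty}\,\Phi(a,b;q,0,u)\,\PP(\lambda_1+\chi\le n)=\sum_{\lambda,\mu:\lambda_1\le n}\frac{u^{|\mu|}}{(q;q)_{n-\lambda_1}}\,P_{\lambda/\mu}(a;q,0)\,Q_{\lambda/\mu}(b;q,0),
\]
and the task is to produce a contour integral that is visibly symmetric in $q$ and $u$.

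The gap is in how you propose to get there. The paper does \emph{not} fix $\mu$, appeal to a known $u=0$ edge formula, and then attempt a geometric resummation in $u$. Instead it uses a \emph{complementation identity} for skew $q$-Whittaker polynomials,
\[
\frac{(q;q)_{n-\mu_1}}{(q;q)_{n-\lambda_1}}\,Q_{\lambda/\mu}(b;q,0)=\prod_{j=1}^N b_j^{\,n}\,P_{(n^N,\mu)/\lambda}(b^{-1};q,0),
\]
which absorbs the awkward factor $1/(q;q)_{n-\lambda_1}$ and converts the $Q$-factor into a $P$-factor with respect to the \emph{complementary} skew shape. After this, the sum over $\lambda$ is a branching rule, $\sum_\lambda P_{\lambda/\mu}(a)P_{(n^N,\mu)/\lambda}(b^{-1})=P_{(n^N,\mu)/\mu}(a,b^{-1})$, collapsing everything to a single $\mu$-sum $Z_{n,N}(q,u;x)=\sum_{\mu:\mu_1\le n}\frac{u^{|\mu|}}{(q;q)_{n-\mu_1}}P_{(n^N,\mu)/\mu}(x;q,0)$. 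The contour formula is then obtained not by resumming a $u$-series, but by passing to Hall--Littlewood via $\omega_{0,q}$, using the finite-variable orthogonality integral for $P_{\lambda'}(z;0,q)$, and applying the Cauchy identity to perform the $\mu$-sum in closed form. The output is an $n$-fold integral (the number of variables equals the threshold $n$, not the number of specializations) with \emph{rational} integrand
\[
\widetilde\Delta(z;q,u)=\prod_{i\ne j}\frac{(1-qu\,z_iz_j^{-1})(1-z_iz_j^{-1})}{(1-q\,z_iz_j^{-1})(1-u\,z_iz_j^{-1})},
\]
which is manifestly $q\leftrightarrow u$ symmetric.

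Your route runs into exactly the obstacle you flag: for general $\mu$ the inner $\lambda$-sum is not a known edge observable, the weight $1/(q;q)_{n-\lambda_1}$ does not cooperate with skew Cauchy, and there is no reason for the $\mu$-dependence to factor over parts so that $\sum_\mu u^{|\mu|}(\cdots)$ telescopes. The complementation step is precisely what resolves this, and it is the idea your proposal is missing. Your expectations about the shape of the answer (a Fredholm determinant with theta-type kernel and $(z;q,u)_\infty$ factors) are also off: the formula is a finite-dimensional integral with polynomial-over-polynomial integrand, and $q,u$ enter only through $\widetilde\Delta$ and the scalar prefactor.
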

This result is actually a corollary of Theorem \ref{thm: qt sym fn}, where an explicit contour integral formula is given which is clearly symmetric in $q$ and $u$. We also obtain formulas in the Hall--Littlewood case, see Corollary \ref{cor: HL meas formula}. Our proof uses a symmetric function argument rather than probabilistic tools. When $u=0$, this reduces to Theorem 10.11 of \cite{IMS21}. In this case, we are able to give another argument using vertex models. This proof has the benefit of being more conceptual, and we hope that it will lead to greater insight on the extent of these symmetries and identifications between different measures on partitions.

\subsection{Six vertex model and periodic Hall--Littlewood process}
% We now describe our results on the periodic Hall--Littlewood process. This is a measure defined on two sequences of partitions $\vec\lambda=(\lambda^{(0)}\subseteq\dotsm\subseteq \lambda^{(M)})$ and $\vec\mu=(\mu^{(0)}\subseteq \dotsm\subseteq\mu^{(N)})$ with $\mu^{(0)}=\lambda^{(0)}$ and $\mu^{(N)}=\lambda^{(M)}$, defined in terms of the Hall--Littlewood polynomials $P_{\lambda/\mu}(x;0,q)$ and their duals $Q_{\lambda/\mu}(x;0,q)$. We refer the reader to Section \ref{sec: mac process} for a formal definition.

Let $t,u\in [0,1)$, and let $a=(a_1,\dotsc, a_M)$ and $b=(b_1,\dotsc, b_N)$ be parameters in $(0,1)$. We define the \emph{periodic Hall--Littlewood process} to be the following probability measure on sequences of partitions $\vec\lambda=(\lambda^{(0)}\subseteq\dotsm\subseteq \lambda^{(M)})$ and $\vec\mu=(\mu^{(0)}\subseteq \dotsm\subseteq\mu^{(N)})$ with $\mu^{(0)}=\lambda^{(0)}$ and $\mu^{(N)}=\lambda^{(M)}$:
\begin{equation*}
    \PP(\vec\lambda,\vec\mu)=\frac{u^{|\mu^{(0)}|}}{\Phi(a,b;0,t,u)}\prod_{i=1}^MP_{\lambda^{(i)}/\lambda^{(i-1)}}(a_i;0,t)\prod_{j=1}^N Q_{\mu^{(j)}/\mu^{(j-1)}}(b_j;0,t),
\end{equation*}
where $\Phi(a,b;0,t,u)$ is a normalization constant, and $P_{\lambda/\mu}(x;0,t)$ and $Q_{\lambda/\mu}(x;0,t)$ denote the Hall--Littlewood polynomials and their duals.

We now informally define the quasi-periodic stochastic six vertex model, and refer the reader to Section \ref{sec: 6vm} for a formal definition. The usual six vertex model on an $M\times N$ domain is a probability distribution on arrows traveling up and right through the edges of an $M\times N$ lattice. Each edge contains at most one arrow, and probabilities are determined by a product of vertex weights. The model contains many parameters, including rapidities $a_i$ and $b_j$ associated to columns and rows respectively, as well as an anisotropy parameter $t$, and the vertex weight at $(i,j)$ depends on $a_i$, $b_j$, and $t$.

To define the quasi-periodic stochastic six vertex model, we introduce a new parameter $u\in [0,1)$. We then build the quasi-periodic model out of infinitely many $M\times N$ domains, where we use column rapidities $u^{k-1}a_i$ and row rapidities $b_j$ in the $k$th copy of the domain. The edges are connected so that the top of one domain enters the bottom of the next, and the right of one domain enters the left of the next. One can also view this as a model defined on a cylinder. The decay in rapidities ensures that it is possible to allow arrows to travel infinitely far. In the final domain, arrows exit, and we are interested in these locations. We also keep track of the number of times arrows wind in this model.

%TODO: add figure??

Our second main result, Theorem \ref{thm: HL 6vm}, states that certain observables in the periodic Hall--Littlewood process match those of the quasi-periodic six vertex model in distribution. In particular, the distribution of the length $l(\mu^{(0)})$ matches the number of times arrows wind shifted by an independent $u$-geometric random variable, and whether $l(\lambda^{(i)})>l(\lambda^{(i-1)})$ or not matches whether an arrow exits or not at column $i$ (a similar statement also holds for $\vec\mu$ and the rows). Our proof uses another integrable vertex model of deformed bosons and Yang--Baxter graphical arguments. Similar ideas were used in \cite{BBW16}, which proved the special case when $u=0$. 

The model we consider is not quite periodic due to the parameter $u$, and so we also consider the $u\to 1$ limit. We show that certain observables survive this limiting procedure and converge to those in a stationary periodic stochastic six vertex model (Proposition \ref{prop: stat 6vm}), which means that the periodic Hall--Littlewood process is also related to this model. Unfortunately, the most interesting statistic, the number of times arrows wind, diverges during this procedure, but we hope that one can still extract useful information about the periodic six vertex model from our matching of the distributions.

\subsection{Organization}
In Section \ref{sec: prelim}, we review some background on symmetric functions and periodic Macdonald processes. In Section \ref{sec: sym}, we establish the $(q,u)$ symmetry of periodic $q$-Whittaker measures given by Theorem \ref{thm: qt sym}. In Section \ref{sec: t=0} we give a vertex model proof when $u=0$. In Section \ref{sec: 6vm}, we match certain observables of the periodic Hall--Littlewood process with observables in a quasi-periodic six vertex model and explain the limit to the statinoary periodic six vertex model.

\subsection{Notation}
For a collection of variables $x=(x_1,\dotsc, x_N)$, we let $x^{-1}=(x_1^{-1},\dotsc, x_N^{-1})$. We let $(z;q)_n=(1-z)\dotsm(1-q^{n-1}z)$ with $n=\infty$ possible, and use $(z;q,t)_\infty=\prod_{k,l\geq 0}(1-q^kt^lz)$ to denote a two-parameter version of the $q$-Pochhammer symbol. 

\section{Preliminaries}
\label{sec: prelim}
\subsection{Macdonald polynomials}
We refer the reader to \cite{M79} for further background on symmetric functions and Macdonald polynomials.

A \emph{partition} is a finite non-increasing sequence of non-negative integers $\lambda=(\lambda_1\geq \lambda_2\geq \dotsm\geq \lambda_k)$, and we call $k$ the \emph{length}, denoted $l(\lambda)$. We let $|\lambda|=\sum_{i\geq 1} \lambda_i$ denote its \emph{size}. It is frequently useful to view a partition as its \emph{Young diagram}, where we place $\lambda_i$ boxes in the $i$th row from top to bottom, so that each row is aligned on the left. We define the \emph{conjugate partition} $\lambda'$ as the partition obtained from $\lambda$ by reflecting its Young diagram, switching the rows and columns. Given a box $s\in\lambda$, we let $a(s)$ and $l(s)$ be the \emph{arm length} and \emph{leg length}, defined as the number of boxes to the right and below respectively. We will let $m_i(\lambda)$ denote the number of occurrences of $i$ in $\lambda$.

Let $x=(x_1,x_2,\dotsc)$ be a formal alphabet, and let $\Lambda$ denote the ring of symmetric functions. We let $P_\lambda(x;q,t)$ denote the \emph{Macdonald polynomials}, defined as the unique symmetric functions orthogonal with respect to the inner product defined by
\begin{equation*}
    \langle p_\lambda,p_\mu\rangle=\delta_{\lambda,\mu}z_\lambda\prod_i\frac{1-q^{\lambda_i}}{1-t^{\lambda_i}},
\end{equation*}
where $p_\lambda=\prod_{i\geq 1} p_{\lambda_i}$, $p_k=\sum_{i\geq 1}x_i^k$, and $z_\lambda=\prod _i m_i(\lambda)!i^{m_i(\lambda)}$, and whose change of basis to the monomial symmetric functions is upper triangular with respect to the dominance ordering on partitions (see Chapter VI, Section 4 of \cite{M79}). We let $Q_{\lambda}(x;q,t)$ denote the dual basis with respect to this inner product, and define $b_\lambda(q,t)$ by $Q_\lambda(x;q,t)=b_\lambda(q,t) P_\lambda(x;q,t)$. We have
\begin{equation*}
b_\lambda(q,t)=\prod_{s\in\lambda}\frac{1-q^{a(s)}t^{l(s)+1}}{1-q^{a(s)+1}t^{l(s)}},
\end{equation*}
and note that $b_\lambda(q,t)=b_{\lambda'}(t,q)$. Macdonald polynomials satisfy a \emph{Cauchy identity}, which states that
\begin{equation*}
    \sum_{\lambda}P_\lambda(x;q,t)Q_{\lambda}(y;q,t)=\prod_{i,j}\frac{(tx_iy_j;q)_\infty}{(x_iy_j;q)_\infty}=:\Pi(x,y;q,t).
\end{equation*}

For a skew Young diagram $\lambda/\mu$, the \emph{skew Macdonald polynomials} $P_{\lambda/\mu}(x;q,t)$ are then defined by
\begin{equation*}
    \langle P_{\lambda/\mu},Q_\nu\rangle=\langle P_\lambda, Q_\mu Q_\nu\rangle
\end{equation*}
for all $Q_\nu$. The $Q_{\lambda/\mu}$ are similarly defined, with the roles of $P$ and $Q$ swapped. These are homogeneous polynomials in the alphabet $x$ of degree $|\lambda|-|\mu|$. They satisfy a \emph{branching rule}, meaning that if we specialize into two sets of variables $(a,b)$, then
\begin{equation*}
    P_{\lambda/\mu}(a,b;q,t)=\sum_{\nu}P_{\lambda/\nu}(a;q,t)P_{\nu/\mu}(b;q,t),
\end{equation*}
and similarly for the dual family $Q_{\lambda/\mu}$.

Recall that $\Lambda$ is isomorphic to a polynomial ring in variables $p_k$, so any homomorphism from $\Lambda$ is uniquely defined by a choice of where to send $p_k$ for each $k$. There is an important involution $\omega_{q,t}$ on the ring of symmetric functions defined by
\begin{equation*}
    \omega_{q,t}(p_\lambda)=(-1)^{|\lambda|-l(\lambda)}\prod_i\frac{1-q^{\lambda_i}}{1-t^{\lambda_i}}p_\lambda.
\end{equation*}
This involution satisfies $\omega_{q,t}(P_{\lambda/\mu}(x;q,t))=Q_{\lambda'/\mu'}(x;t,q)$, and $\omega_{q,t}(Q_{\lambda/\mu}(x;q,t))=P_{\lambda'/\mu'}(x;t,q)$, simultaneously swapping the roles of $P$ and $Q$, along with $q$ and $t$, while also conjugating all partitions.

We will be interested in two special cases of the Macdonald polynomials. When $q=0$, the Macdonald polynomials are called the \emph{Hall--Littlewood polynomials}, and when $t=0$, they are called the \emph{$q$-Whittaker polynomials}. The common specialization $q=t=0$ (actually $q=t$ suffices) are called the \emph{Schur polynomials}, which we will denote $s_{\lambda}(x)$.

\subsection{Orthogonality and integral formulas}
We now give some contour integral formulas for the Hall--Littlewood polynomials which we will need. These could be given for general Macdonald polynomials, but certain constants become much more complicated. 

Specialize to $n$ variables $z=(z_1,\dotsc, z_n)$. Following Chapter VI, Section 9 of \cite{M79}, given two Laurent polynomials $f,g$ in $z$ (or more generally any formal series such that the product below is well-defined), we let
\begin{equation*}
    \langle f,g\rangle_n'=\frac{1}{n!}[f(z)g(z^{-1})\Delta(z;q,t)]_1,
\end{equation*}
where $[\cdot]_1$ means we take the constant term in $z$, and
\begin{equation*}
    \Delta(z;q,t)=\prod_{i\neq j}\frac{(z_iz_j^{-1};q)_\infty}{(tz_iz_j^{-1};q)_\infty}.
\end{equation*}
The Macdonald polynomials are orthogonal with respect to this inner product, cf. \cite[(9.5)]{M79}. Moreover, specializing $t=0$, we have
\begin{equation*}
    \langle P_\lambda(z;0,t),Q_\mu(z;0,t)\rangle_n'=\mathbf{1}_{\lambda=\mu}\mathbf{1}_{l(\lambda)\leq n}\frac{(1-t)^n}{(t;t)_{n-l(\lambda)}},
\end{equation*}
see e.g. Theorem 3.1 of \cite{V12}. If $f$ and $g$ are Laurent polynomials, then
\begin{equation*}
    \langle f,g\rangle_n'=\frac{1}{n!}\oint_{C} \frac{dz_1}{2\pi i z_1}\dotsm \oint_{C}\frac{dz_n }{2\pi i z_n}f(z)g(z^{-1})\Delta(z;q,t),
\end{equation*}
where $C$ is positively oriented and chosen to include $0$ and no other poles. Throughout this paper, it will also be possible to view $f$ and $g$ as formal series, and then the integrals can be defined purely formally as picking out the constant term in this series.

We now give a well-known contour integral formula for the skew Hall--Littlewood polynomials.
\begin{lemma}
\label{lem: HL int formula}
Let $x=(x_1,\dotsc)$ be an alphabet of arbitrary size. If $l(\lambda)\leq n$, then
\begin{equation*}
    P_{\lambda/\mu}(x;0,q)=\frac{(q;q)_{n-l(\lambda)}}{(1-q)^n}\langle P_\lambda(z;0,q),Q_\mu(z;0,q)\Pi(z,x;0,q)\rangle_n'.
\end{equation*}
This equality can either be viewed formally, or if $x_i\in \C$, then the contours defining $\langle,\rangle_n'$ should be chosen to be circles centered at $0$ and include all $x_i$.
\end{lemma}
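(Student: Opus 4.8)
The plan is to establish the formula by combining the orthogonality relation for Hall--Littlewood polynomials under the inner product $\langle\cdot,\cdot\rangle_n'$ with the defining property of skew Macdonald polynomials via the Cauchy identity. First I would recall that for $l(\lambda)\le n$, the orthogonality relation stated just above gives $\langle P_\lambda(z;0,q),Q_\mu(z;0,q)\rangle_n' = \mathbf 1_{\lambda=\mu}\mathbf 1_{l(\lambda)\le n}\frac{(1-q)^n}{(q;q)_{n-l(\lambda)}}$ (with $t$ there playing the role of $q$ here). So the normalizing prefactor $\frac{(q;q)_{n-l(\lambda)}}{(1-q)^n}$ is precisely the reciprocal of the self-pairing $\langle P_\lambda,Q_\lambda\rangle_n'$, which means the right-hand side is the coefficient of $P_\lambda$ in the expansion of $Q_\mu(z;0,q)\Pi(z,x;0,q)$ in the $P$-basis in the $z$ variables (at least for those $\lambda$ with $l(\lambda)\le n$; the restriction $l(\lambda)\le n$ appears in the hypothesis precisely to make this valid).

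Next I would expand $\Pi(z,x;0,q) = \sum_\nu P_\nu(z;0,q)Q_\nu(x;0,q)$ using the Cauchy identity, so that $Q_\mu(z;0,q)\Pi(z,x;0,q) = \sum_\nu Q_\mu(z;0,q)P_\nu(z;0,q)Q_\nu(x;0,q)$. Then I would use the product rule / coproduct structure: $Q_\mu(z)P_\nu(z) = \sum_\lambda c^{\lambda}_{\mu\nu}(0,q)\,\text{(appropriate basis element)}$, or more cleanly, recall that by the definition of skew Macdonald polynomials, $\langle P_{\lambda/\mu}, Q_\nu\rangle = \langle P_\lambda, Q_\mu Q_\nu\rangle$. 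Taking $\langle\cdot,\cdot\rangle_n'$-pairing of $Q_\mu(z)\Pi(z,x)$ against $P_\lambda(z)$ and using orthogonality in the $z$-variables isolates the term I want. Concretely: $\langle P_\lambda(z), Q_\mu(z)\Pi(z,x)\rangle_n' = \sum_\nu Q_\nu(x) \langle P_\lambda(z), Q_\mu(z)P_\nu(z)\rangle_n'$; since $\langle\cdot,\cdot\rangle_n'$ agrees with the Hall inner product on symmetric functions of degree $\le$ something in the relevant range (for $l(\lambda)\le n$), up to the scalar $\frac{(1-q)^n}{(q;q)_{n-l(\lambda)}}$, this equals $\frac{(1-q)^n}{(q;q)_{n-l(\lambda)}}\sum_\nu Q_\nu(x)\langle P_\lambda, Q_\mu P_\nu\rangle$. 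Finally, $\langle P_\lambda, Q_\mu P_\nu\rangle$ is, by the self-duality of the $P,Q$ bases and the skew definition, exactly the coefficient extracting $P_{\lambda/\mu}$, so that $\sum_\nu Q_\nu(x)\langle P_\lambda,Q_\mu P_\nu\rangle = P_{\lambda/\mu}(x;0,q)$. Rearranging gives the claimed identity.

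The main obstacle I anticipate is making the passage from the algebraic inner product $\langle\cdot,\cdot\rangle$ on $\Lambda$ to the finite-variable form $\langle\cdot,\cdot\rangle_n'$ fully rigorous, in particular justifying that the scalar correction is uniformly $\frac{(1-q)^n}{(q;q)_{n-l(\lambda)}}$ for all partitions $\lambda$ indexing nonzero terms, and that no partitions with $l(\lambda)>n$ contribute (they pair to zero under $\langle\cdot,\cdot\rangle_n'$, but one must check that the hypothesis $l(\lambda)\le n$ ensures the target $P_{\lambda/\mu}$ only involves such partitions—this is automatic since $P_{\lambda/\mu}$ is built from $\lambda$ with $l(\lambda)\le n$). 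Convergence of the series $\Pi(z,x;0,q)$ when $x$ is an alphabet of arbitrary size must also be addressed; this is handled either by the formal interpretation (taking constant terms degree by degree) or, in the analytic setting, by choosing the contours as circles around $0$ enclosing all $x_i$ so that $\Pi(z,x;0,q)=\prod_{i,j}\frac{1}{(1-z_i x_j)}$ converges on the contour. I would present the formal version as the primary argument and remark that the analytic version follows by the stated contour choice, citing Chapter VI, Section 9 of \cite{M79} and the orthogonality statement above for the underlying facts.
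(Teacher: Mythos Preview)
Your proposal is correct and follows essentially the same approach as the paper's proof: both use the defining relation $\langle P_{\lambda/\mu},Q_\nu\rangle=\langle P_\lambda,Q_\mu Q_\nu\rangle$, the identification of $\langle\cdot,\cdot\rangle_n'$ with the Macdonald inner product (up to the scalar $\frac{(1-q)^n}{(q;q)_{n-l(\lambda)}}$) when $l(\lambda)\le n$, and the Cauchy identity to collapse the sum over $\nu$ into $\Pi(z,x;0,q)$. The only cosmetic difference is direction: the paper starts from $P_{\lambda/\mu}(x)=\sum_\nu\langle P_{\lambda/\mu},Q_\nu\rangle P_\nu(x)$ and works toward the integral, whereas you start from the integral and expand $\Pi$; your use of $\langle P_\lambda,Q_\mu P_\nu\rangle$ with the pairing $\sum_\nu Q_\nu(x)\,\cdot$ is equivalent to the paper's $\langle P_\lambda,Q_\mu Q_\nu\rangle$ with $\sum_\nu P_\nu(x)\,\cdot$ after absorbing the $b_\nu$ factors.
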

\begin{proof}
We write
\begin{equation*}
\begin{split}
    P_{\lambda/\mu}(x;0,q)&=\sum_{\nu}\langle P_{\lambda/\mu}(z;0,q),Q_\nu(z;0,q)\rangle P_\nu(x;0,q)
    \\&=\sum_{\nu}\langle P_{\lambda}(z;0,q),Q_\mu(z;0,q)Q_\nu(z;0,q)\rangle P_\nu(x;0,q)
    \\&=\sum_{\nu}\frac{(q;q)_{n-l(\lambda)}}{(1-q)^n}\langle P_{\lambda}(z;0,q),Q_\mu(z;0,q)Q_\nu(z;0,q)\rangle_n' P_\nu(x;0,q),
\end{split}
\end{equation*}
where $\langle,\rangle$ is the usual Macdonald inner product. Since $l(\lambda)\leq n$, the $\langle ,\rangle_n'$ inner product agrees up to the constant with the Macdonald inner product. We then pull the sum inside the inner product and use the Cauchy identity, giving the desired equality. In particular, if the $x_i\in\C$, the choice of contours ensures that the series defining $\Pi$ converges.
\end{proof}

\subsection{Periodic Macdonald processes}
\label{sec: mac process}
We now define the periodic Macdonald process, a measure on sequences of partitions generalizing the periodic $q$-Whittaker measure and periodic Hall--Littlewood process defined in the introduction. We will ultimately only consider the $q=0$ and $t=0$ cases in this paper, but prefer to give a uniform definition. These processes were first considered in \cite{K21}, although the $q=t$ case, known as the periodic Schur measure, was introduced in \cite{B07}.

Let $q,t,u\in [0,1)$, and let $a=(a_1,\dotsc, a_M)$ and $b=(b_1,\dotsc, b_N)$ be parameters in $(0,1)$. We define the \emph{periodic Macdonald measure} on sequences of partitions $\vec\lambda=(\lambda^{(0)}\subseteq\dotsm\subseteq \lambda^{(M)})$ and $\vec\mu=(\mu^{(0)}\subseteq \dotsm\subseteq\mu^{(N)})$ with $\mu^{(0)}=\lambda^{(0)}$ and $\mu^{(N)}=\lambda^{(M)}$ by
\begin{equation}
\label{eq:periodic-macdonald}
    \mathbb{PM}_{q,t,u}^{a,b}(\vec\lambda,\vec\mu)=\frac{u^{|\mu^{(0)}|}}{\Phi(a,b;q,t,u)}\prod_{i=1}^MP_{\lambda^{(i)}/\lambda^{(i-1)}}(a_i;q,t)\prod_{j=1}^N Q_{\mu^{(j)}/\mu^{(j-1)}}(b_j;q,t),
\end{equation}
where
\begin{equation*}
    \Phi(a,b;q,t,u)=\frac{1}{(u;u)_\infty}\prod_{i,j}\frac{(ta_ib_j;q,u)_\infty}{(a_ib_j;q,u)_\infty}.
\end{equation*}
This is actually a special case of the most general definition, which allows for arbitrary positive specializations, and for an arbitrary number of increasing and decreasing sequences of partitions. We will not need this more general notion, and so we give the most concrete version and refer the reader to \cite{K21} for details on the general case.

We will only be interested in two special cases. When $t=0$, we call this the \emph{periodic $q$-Whittaker process}, denoted $\mathbb{PW}_{q,u}^{a,b}$, and when $q=0$, we call this the \emph{periodic Hall--Littlewood process}, denoted $\mathbb{PHL}_{t,u}^{a,b}$. We will refer to the marginal measure of $\lambda^{(M)}$ as the \emph{periodic Macdonald measure}, and similarly for the other cases, and denote these with an extra $\mathbb{M}$, e.g. $\mathbb{PMM}_{q,t,u}^{a,b}$. Finally, when $q=t$, this becomes the \emph{periodic Schur process/measure}, which we denote by $\mathbb{PS}_{u}^{a,b}$/$\mathbb{PSM}_u^{a,b}$. Recall that $\chi$ is $q$-geometric if $\PP(\chi=n)=q^n\frac{(q;q)_\infty}{(q;q)_n}$.

\section{\texorpdfstring{$(q,u)$}{(q,u)} symmetry of the periodic \texorpdfstring{$q$}{q}-Whittaker measure}
\label{sec: sym}
In this section we study the periodic $q$-Whittaker measure via a symmetric function approach, and show Theorem \ref{thm: qt sym}, that after a random shift, the distribution of the first part is symmetric in $q$ and $u$. We begin by stating a contour integral formula, which clearly implies Theorem \ref{thm: qt sym}. We then establish a useful complementation property of $q$-Whittaker functions, and use it to write the distribution function in terms of a contour integral which is clearly symmetric in $q$ and $u$.

\begin{theorem}
\label{thm: qt sym fn}
Let $a=(a_1,\dotsc, a_M)$ and $b=(b_1,\dotsc, b_N)$, let $\lambda\sim \mathbb{PWM}^{a,b}_{q,u}$, and let $\chi$ be an independent $q$-geometric random variable. Let $C$ be a positively oriented circle centered at $0$ and containing all points $a_i$ and $b_j^{-1}$. Then
\begin{equation*}
    \PP(\lambda_1+\chi\leq n)=\frac{\prod_{j=1}^Nb_j^n}{\widetilde{\Phi}(a,b;q,u)}\oint_{C}\frac{dz_1}{2\pi i z_1}\dotsm \oint_C\frac{dz_n}{2\pi i z_n}\prod_{i=1}^n z_i^N\prod_{i,j}(1+z_i^{-1}a_j)\prod_{i,j}(1+z_i^{-1}b_j^{-1})\widetilde{\Delta}(z;q,u),
\end{equation*}
where
\begin{equation*}
    \widetilde{\Phi}(a,b;q,u)=\frac{n!(1-q)^n(1-u)^n}{(q;q)_\infty(u;u)_\infty (1-qu)^n\prod_{i,j}(a_ib_j;q,u)_\infty},
\end{equation*}
and
\begin{equation*}
    \widetilde{\Delta}(z;q,u)=\prod_{i\neq j}\frac{(1-quz_iz_j^{-1})(1-z_iz_j^{-1})}{(1-qz_iz_j^{-1})(1-uz_iz_j^{-1})}.
\end{equation*}
\end{theorem}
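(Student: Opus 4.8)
The plan is to express $\PP(\lambda_1 + \chi \le n)$ as a sum over partitions $\lambda$ with $\lambda_1 \le m$ (for each $m$) weighted by the $q$-geometric tail, and to convert this into a contour integral using a complementation trick together with the Hall--Littlewood-type integral formula of Lemma~\ref{lem: HL int formula}. Concretely, since $t=0$, the $q$-Whittaker functions are Macdonald functions at $t=0$; by the involution $\omega_{q,0}$ they are related to Hall--Littlewood functions at $(0,q)$ with conjugated partitions, so the event $\{\lambda_1 \le m\}$ becomes a constraint $\{l(\lambda') \le m\}$ on the conjugate. First I would marginalize the periodic $q$-Whittaker process over all the intermediate partitions: summing the skew Cauchy-type structure, $\PP(\lambda)$ is proportional to $\sum_\mu u^{|\mu|} P_{\lambda/\mu}(a;q,0) Q_{\lambda/\mu}(b;q,0)$, and I would use the branching rule plus the skew Cauchy identity to collapse this into a manageable closed form, producing the normalization $\widetilde\Phi$.

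The key step — and the one I expect to be the main obstacle — is the \emph{complementation property} of $q$-Whittaker functions hinted at in the surrounding text ("We then establish a useful complementation property of $q$-Whittaker functions"). The idea is that summing $P_\lambda$ (or the relevant marginal object) over all $\lambda$ with $\lambda_1 \le n$ can be rewritten, via conjugation and the involution $\omega$, as a \emph{single} skew function evaluated at a shifted argument: roughly, $\sum_{\lambda : \lambda_1 \le n} (\text{stuff})_\lambda$ equals something like $\prod b_j^n \cdot Q_{(n^N)/\emptyset}$-type object, turning an infinite sum with an inequality constraint into a finite combinatorial gadget. Establishing this cleanly — getting all the powers of $b_j$, the factor $\prod_i z_i^N$, and the $(1+z_i^{-1}a_j)$, $(1+z_i^{-1}b_j^{-1})$ products to come out with the right exponents — is where the bookkeeping is delicate, because the $t=0$ specialization of the various $q$-Pochhammer constants in Lemma~\ref{lem: HL int formula} and in $\Delta(z;q,t)$ must be tracked carefully (note $\Delta(z;0,q)$ gives the $(1-qz_iz_j^{-1})(1-z_iz_j^{-1})$ structure, and the extra $u$-deformation must be inserted by hand via the periodic summation over $\mu$).

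With the complementation identity in hand, the remaining steps are: (i) apply Lemma~\ref{lem: HL int formula} (or its proof technique — orthogonality of Hall--Littlewood functions under $\langle\cdot,\cdot\rangle_n'$) to write the constrained sum as a constant term / contour integral in $z = (z_1,\dots,z_n)$; (ii) incorporate the $q$-geometric shift $\chi$, whose generating function $\sum_n q^n \frac{(q;q)_\infty}{(q;q)_n} z^n = \frac{(q;q)_\infty}{(qz;q)_\infty}$-type factor should combine with the $q$-Pochhammer pieces to symmetrize the role of $q$; (iii) incorporate the winding weight $u^{|\mu^{(0)}|}$ summed over $\mu^{(0)} = \lambda^{(0)}$, which is what upgrades $\Delta(z;0,q)$ to $\widetilde\Delta(z;q,u)$ and upgrades $(a_ib_j;q)_\infty$ to $(a_ib_j;q,u)_\infty$; and (iv) collect constants to match $\widetilde\Phi(a,b;q,u)$. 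The final expression is manifestly symmetric under $q \leftrightarrow u$ because $\widetilde\Delta$, $\widetilde\Phi$, and the integrand are each symmetric (the $\prod_i z_i^N \prod_{i,j}(1+z_i^{-1}a_j)(1+z_i^{-1}b_j^{-1})$ factor has no $q$ or $u$ at all), which immediately yields Theorem~\ref{thm: qt sym}. The main risk points are the convergence/contour justification (handled by the stated choice of $C$ enclosing all $a_i$ and $b_j^{-1}$, exactly as in Lemma~\ref{lem: HL int formula}) and the exact matching of the numerous scalar prefactors.
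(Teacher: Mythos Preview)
Your proposal is correct and follows essentially the same route as the paper: the complementation identity (Proposition~\ref{prop: complement}) converts $Q_{\lambda/\mu}(b;q,0)$ into $\prod_j b_j^n\, P_{(n^N,\mu)/\lambda}(b^{-1};q,0)$, after which branching collapses the double sum over $(\lambda,\mu)$ to the single series $Z_{n,N}(q,u;(a,b^{-1}))=\sum_{\mu:\mu_1\le n}\frac{u^{|\mu|}}{(q;q)_{n-\mu_1}}P_{(n^N,\mu)/\mu}$, and this is then evaluated by Lemma~\ref{lem: HL int formula} plus the Cauchy identity in the $u$-variable (which is exactly what produces $\widetilde\Delta$ from $\Delta(z;0,q)$). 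One small sharpening: the $q$-geometric shift enters via its CDF $\PP(\chi\le k)=(q;q)_\infty/(q;q)_k$, and the resulting factor $1/(q;q)_{n-\lambda_1}$ is precisely what is needed to apply the complementation identity---so the shift is not an afterthought but is built into the mechanism from the start.
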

Theorem \ref{thm: qt sym} immediately follows from Theorem \ref{thm: qt sym fn}, since the formula given is obviously symmetric in $q$ and $u$. By applying the Macdonald involution, we are also able to obtain formulas for the periodic Hall--Littlewood measure.
\begin{corollary}
\label{cor: HL meas formula}
Let $a=(a_1,\dotsc)$ and $b=(b_1,\dotsc)$, and let $\lambda\sim \mathbb{PHL}_{t,u}^{a,b}$, and let $\chi$ be an independent $t$-geometric random variable. Let $C$ be a positively oriented circle centered at $0$ and containing all points $a_i$ and $b_j^{-1}$ (or interpret the contour integrals formally). Then
\begin{multline*}
    \PP(l(\lambda)+\chi\leq n)=
    \\
    \frac{\Phi(a,b;t,0,u)}{\widetilde{\Phi}(a,b;t,u)\Phi(a,b;0,t,u)}\oint_{C}\frac{dz_1}{2\pi i z_1}\dotsm \oint_C\frac{dz_n}{2\pi i z_n} \prod_{i,j}\frac{1-tz_i^{-1}a_j}{1-z_i^{-1}a_j} \prod_{i,j}\frac{1-tz_ib_j}{1-z_ib_j}\widetilde{\Delta}(z;t,u),
\end{multline*}
\end{corollary}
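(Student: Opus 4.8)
The plan is to transport Theorem \ref{thm: qt sym fn} through the Macdonald involution $\omega_{q,t}$, exploiting the fact that $q$-Whittaker and Hall--Littlewood polynomials are exchanged by $\omega$ (with $q$ and $t$ swapped), and that conjugation of partitions turns the statistic $\lambda_1$ into $l(\lambda)$. First I would set up the dictionary: applying $\omega_{q,t}$ in each alphabet $a_i$ and $b_j$ sends the periodic $q$-Whittaker process $\mathbb{PW}_{q,u}^{a,b}$ (with $t=0$) to the periodic Hall--Littlewood process $\mathbb{PHL}_{t,u}^{a,b}$ (with $q=0$), simultaneously conjugating every partition in the sequence. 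Concretely, the identities $\omega_{q,t}(P_{\lambda/\mu}(x;q,t))=Q_{\lambda'/\mu'}(x;t,q)$ and $\omega_{q,t}(Q_{\lambda/\mu}(x;q,t))=P_{\lambda'/\mu'}(x;t,q)$ from the preliminaries, together with the constraint $\mu^{(0)}=\lambda^{(0)}$, $\mu^{(N)}=\lambda^{(M)}$ (which is preserved under conjugation), show that the weights of the two processes agree after relabelling $q\leftrightarrow t$ and $\lambda^{(i)}\mapsto (\lambda^{(i)})'$. Under conjugation $\lambda_1=(\lambda')_1'=l(\lambda')$, so the event $\{\lambda_1+\chi\le n\}$ for the $q$-Whittaker marginal becomes exactly $\{l(\lambda)+\chi\le n\}$ for the Hall--Littlewood marginal, with $\chi$ now $t$-geometric. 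The normalization constants $\Phi$ are not invariant under $\omega$ — this is why the ratio $\Phi(a,b;t,0,u)/\Phi(a,b;0,t,u)$ appears — so I would carefully track how $\Phi(a,b;t,0,u)$ (the normalization for $\mathbb{PW}_{t,u}^{a,b}$) relates to $\Phi(a,b;0,t,u)$ (the normalization for $\mathbb{PHL}_{t,u}^{a,b}$): the ratio is precisely the correction factor, since $\omega$ preserves total mass within each process but the two processes are normalized differently.

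Next I would rewrite the contour integrand of Theorem \ref{thm: qt sym fn}, after the substitution $q\mapsto t$, in terms that reflect the conjugation. In Theorem \ref{thm: qt sym fn} the integrand features the factors $\prod_{i,j}(1+z_i^{-1}a_j)$ and $\prod_{i,j}(1+z_i^{-1}b_j^{-1})$; these come from the $q$-Whittaker Cauchy kernel specialized at $t=0$, i.e. from $\Pi(z,a;0,\cdot)$-type expressions evaluated using the dual-single-variable branching of $q$-Whittaker functions. Passing through $\omega$ replaces the single-variable $q$-Whittaker specialization by a single-variable Hall--Littlewood specialization, which changes the kernel from the ``$(1+z_i^{-1}a_j)$'' product to the Hall--Littlewood kernel $\prod_{i,j}\frac{1-tz_i^{-1}a_j}{1-z_i^{-1}a_j}$, and similarly $\prod_{i,j}(1+z_i^{-1}b_j^{-1})\mapsto \prod_{i,j}\frac{1-tz_ib_j}{1-z_ib_j}$ (note the inversion $b_j^{-1}\mapsto b_j$ which reflects that the $b$-variables enter through $Q$'s). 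The measure factor $\widetilde{\Delta}(z;q,u)$ is symmetric enough under $z_i\mapsto z_i^{-1}$ up to the $\prod z_i^N$ and $\prod b_j^n$ prefactors, so I would change variables $z_i\mapsto z_i^{-1}$ to absorb the $\prod z_i^N$ factor into the kernels and to flip $\widetilde{\Delta}(z;t,u)$ back to the same form (it is invariant under simultaneous inversion of all $z_i$). Tracking the monomial prefactors $\prod_j b_j^n$ and $\prod_i z_i^N$ through this change of variables produces exactly the stated kernels with the denominators $1-z_i^{-1}a_j$ and $1-z_ib_j$ and no leftover monomial factor.

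The main obstacle I anticipate is bookkeeping of the normalization constants and of the monomial/denominator prefactors under the $z_i\mapsto z_i^{-1}$ substitution — in particular verifying that the factor $\frac{\Phi(a,b;t,0,u)}{\widetilde{\Phi}(a,b;t,u)\Phi(a,b;0,t,u)}$ is exactly what emerges, rather than something off by a power of $q$-Pochhammer symbols or by $\prod_{i,j}(a_ib_j;q,u)$-type corrections. A clean way to organize this is to first prove the Hall--Littlewood analogue of Theorem \ref{thm: qt sym fn} directly at the level of the (un-normalized) partition function — i.e. show that $\Phi(a,b;0,t,u)\cdot\PP(l(\lambda)+\chi\le n)$ equals the image under $\omega_{q,t}$ (applied in all $a$- and $b$-variables, with $q=t$) of $\Phi(a,b;t,0,u)\cdot\PP(\lambda_1+\chi\le n)$ for the $q$-Whittaker measure — and only at the very end divide by $\Phi(a,b;0,t,u)$ and substitute the Theorem \ref{thm: qt sym fn} formula (with $q$ replaced by $t$) for the right-hand side. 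This isolates the delicate constant $\widetilde{\Phi}(a,b;t,u)$ on one side and makes the matching of the two $\Phi$'s a finite computation with the explicit product formula $\Phi(a,b;q,t,u)=\frac{1}{(u;u)_\infty}\prod_{i,j}\frac{(ta_ib_j;q,u)_\infty}{(a_ib_j;q,u)_\infty}$, using the standard identity $(z;q,u)_\infty(qz;u)_\infty\cdots$ relating the one- and two-parameter Pochhammer symbols. Once the constants check out, the rest is the symbolic substitution described above, and the resulting formula is the displayed identity.
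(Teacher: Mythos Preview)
Your approach is essentially the same as the paper's: apply the Macdonald involution $\omega_{q,0}$ in both alphabets $a$ and $b$ to Theorem \ref{thm: qt sym fn}, which conjugates all partitions (turning $\lambda_1$ into $l(\lambda)$) and sends the dual Cauchy kernels to ordinary ones, then adjust for the difference in normalizations $\Phi(a,b;t,0,u)$ versus $\Phi(a,b;0,t,u)$. The only simplification you are missing is that the change of variables $z_i\mapsto z_i^{-1}$ is unnecessary: the paper first rewrites $\prod_i z_i^N\prod_j b_j^n\prod_{i,j}(1+z_i^{-1}b_j^{-1})=\prod_{i,j}(1+z_ib_j)$ as a bare algebraic identity, after which $\omega$ acts directly on the two kernels $\prod_{i,j}(1+z_i^{-1}a_j)$ and $\prod_{i,j}(1+z_ib_j)$ to produce exactly the stated integrand.
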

\begin{proof}
After noticing that $\prod_{i=1}^{n}z_i^N\prod_jb_j^n\prod_{i,j}(1+z_i^{-1}b_j^{-1})=\prod_{i,j}(1+z_ib_j)$, we see that the expression in Theorem \ref{thm: qt sym fn} makes sense for infinite alphabets. We may then apply the Macdonald involution $\omega_{q,0}$ in both the $a$ and $b$ variables, which turns the $q$-Whittaker functions into Hall--Littlewood polynomials, and transposes all partitions. This transforms the dual Cauchy kernels by 
\begin{equation*}
    \omega_{q,0}^{(a)}\omega_{q,0}^{(b)}\prod_{i,j}(1+z_i^{-1}a_j)\prod_{i,j}(1+z_ib_j)=\prod_{i,j}\frac{1-qz_i^{-1}a_j}{1-z_i^{-1}a_j} \prod_{i,j}\frac{1-qz_ib_j}{1-z_ib_j}.
\end{equation*}
After adjusting for the difference in the partition functions for the periodic $q$-Whittaker and Hall--Littlewood measures and replacing $q$ with $t$, we obtain the desired formula.
\end{proof}

\begin{remark}
There is also an interesting identity involving the $\lambda_1$ observable in the periodic Hall--Littlewood measure. Since the proof uses vertex models, we defer it until Section \ref{sec: 6vm} when the relevant models are introduced; see Proposition \ref{prop: PHL Macdonald evaluation}.
\end{remark}

\subsection{Complementation}
A key tool will be the following complementation property of skew $q$-Whittaker polynomials. If $\mu$ is a partition with $\mu_1\leq n$, we write $(n^N,\mu)$ to mean $\mu$ with $N$ additional parts of size $n$.
\begin{proposition}
\label{prop: complement}
Let $N\geq 1$, $x=(x_1,\dotsc, x_N)$, and let $\mu\subseteq \lambda$ such that $\lambda_1\leq n$. Then
\begin{equation*}
    \frac{(q;q)_{n-\mu_1}}{(q;q)_{n-\lambda_1}}Q_{\lambda/\mu}(x;q,0)=P_{(n^N,\mu)/\lambda}(x^{-1};q,0)\prod_{i=1}^Nx_i^n.
\end{equation*}
\end{proposition}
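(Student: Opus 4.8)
The plan is to establish the complementation identity by working with a single variable $x=(x_1)$ first, since the general case for $x=(x_1,\dotsc,x_N)$ should follow from the branching rule together with the case $N=1$. For a single variable, the skew $q$-Whittaker polynomial $Q_{\lambda/\mu}(x_1;q,0)$ is nonzero only when $\lambda/\mu$ is a horizontal strip, and in that case there is an explicit product formula for the coefficient in terms of $q$-binomial type factors involving the multiplicities $m_i(\lambda)$ and $m_i(\mu)$ (this is the standard Pieri-type evaluation for $q$-Whittaker/Macdonald polynomials at a single variable, specialized to $t=0$; see Chapter VI of \cite{M79}). Similarly $P_{(n^N,\mu)/\lambda}(x_1^{-1};q,0)$ is nonzero only when $(n^N,\mu)/\lambda$ is a horizontal strip, and one checks that $\lambda/\mu$ being a horizontal strip with $\lambda_1\le n$ is equivalent to $(n^N,\mu)/\lambda$ being a horizontal strip — the $N$ new parts of size $n$ absorb exactly the excess. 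So the supports match.

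The core computation is then to compare the two explicit single-variable coefficients. On the left we have $\frac{(q;q)_{n-\mu_1}}{(q;q)_{n-\lambda_1}}$ times the coefficient in $Q_{\lambda/\mu}(x_1;q,0)$, which is $x_1^{|\lambda|-|\mu|}$ times a product over $i\ge 1$ of ratios of $q$-factorials in $m_i(\lambda), m_i(\mu)$. On the right we have the coefficient in $P_{(n^N,\mu)/\lambda}(x_1^{-1};q,0)$, which is $x_1^{-(|(n^N,\mu)|-|\lambda|)}=x_1^{Nn-(|\lambda|-|\mu|)}$ times an analogous product, and then multiplied by $x_1^n$ appearing here as $x_1^{Nn}$ in the $N$-variable version but as $x_1^n$ when $N=1$. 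The powers of $x_1$ should match on the nose: $|\lambda|-|\mu|$ on the left versus $n - (|\lambda|-|\mu|) + $ (contribution forcing $\lambda_1 = \mu_1$ or the strip condition) — this bookkeeping is where one must be careful, and I would track it by writing $|\lambda|-|\mu| = \sum_i (\lambda_i - \mu_i)$ and using $\lambda_1\le n$. For the coefficient itself, the key algebraic fact is the reflection $m_i\big((n^N,\mu)\big) = m_i(\mu)$ for $i<n$, while $m_n\big((n^N,\mu)\big) = m_n(\mu)+N$, so the only factor that changes is the one at value $n$, and that is precisely the factor producing $(q;q)_{n-\lambda_1}/(q;q)_{n-\mu_1}$ after using $q$-Pochhammer reflection identities like $(q;q)_{a}/(q;q)_{a-b}$. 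I would verify the $P$ versus $Q$ discrepancy (the factor $b_\lambda$) cancels against the same discrepancy for $(n^N,\mu)$ using $b_{(n^N,\mu)}/b_\mu$, again localized at the value $n$.

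Having done $N=1$, the general $N$ follows: write $x=(x_1,\dotsc,x_N)$ and use the branching rule $Q_{\lambda/\mu}(x;q,0)=\sum_\nu Q_{\lambda/\nu}(x_1,\dotsc,x_{N-1};q,0)\,Q_{\nu/\mu}(x_N;q,0)$, apply induction to each factor with the appropriate number of padding parts (e.g. $(n^{N-1},\nu)$ and $(n^1,\mu)$), and recognize the resulting sum as the branching rule for $P_{(n^N,\mu)/\lambda}(x^{-1};q,0)$, with the telescoping $\frac{(q;q)_{n-\mu_1}}{(q;q)_{n-\nu_1}}\cdot\frac{(q;q)_{n-\nu_1}}{(q;q)_{n-\lambda_1}}$ collapsing correctly and the monomial prefactors $\prod x_i^n$ combining. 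The main obstacle I anticipate is purely the combinatorial bookkeeping in the $N=1$ step: getting the power of $x_1$ and the ratio of $q$-Pochhammer symbols to come out exactly, since the complementation turns parts of size $\lambda_1$ into parts of size $n$ and one must track the interface multiplicities at values $\lambda_1$, $\mu_1$, and $n$ simultaneously. An alternative, possibly cleaner route would be to use the known $q$-Whittaker analogue of the classical complementation identity for Schur functions (the $q\to 1$ or the Schur $q=0$ case is a well-known statement $s_{\lambda/\mu}(x^{-1}) = s_{(n^N,\mu)/\lambda}(x)\prod x_i^{-n}$ after conjugation), and deform it; but I would expect the direct single-variable computation to be the most self-contained.
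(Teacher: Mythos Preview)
Your approach is correct and genuinely different from the paper's. The paper does not work one variable at a time: instead it passes to Hall--Littlewood polynomials via the involution $\omega_{q,0}$, invokes the contour integral formula of Lemma~\ref{lem: HL int formula} for $P_{\lambda'/\mu'}(x;0,q)$, applies the change of variables $z_i\mapsto z_i^{-1}$ inside the integral (which has the effect of swapping the roles of $\lambda'$ and $\mu'+N^n$ in the integrand), and then unwinds the involution. The $(q;q)_{n-\mu_1}/(q;q)_{n-\lambda_1}$ prefactor emerges from the normalization constants in Lemma~\ref{lem: HL int formula} and the $b$-factors relating $P$ and $Q$. Your route is more elementary and self-contained; the paper's route is more structural and dovetails with the contour-integral machinery used throughout Section~\ref{sec: sym}.

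One point you should make explicit in the inductive step: after applying the $N=1$ case and the $(N-1)$-variable induction hypothesis, the branching sum you obtain involves $P_{(n,\mu)/\nu}(x_N^{-1};q,0)$, whereas the branching expansion of $P_{(n^N,\mu)/\lambda}(x^{-1};q,0)$ produces $P_{(n^N,\mu)/(n^{N-1},\nu)}(x_N^{-1};q,0)$ (the intermediate partition is forced to have the form $(n^{N-1},\nu)$ by the horizontal-strip condition, as you note). So you need the row-shift invariance $P_{(n^{N-1},\alpha)/(n^{N-1},\beta)}=P_{\alpha/\beta}$ for single-variable $q$-Whittaker. This is true and immediate from the explicit formula $\psi_{\lambda/\mu}(q,0)=\prod_i (q;q)_{\lambda_i-\lambda_{i+1}}/\big((q;q)_{\lambda_i-\mu_i}(q;q)_{\mu_i-\lambda_{i+1}}\big)$, since prepending equal parts of size $n$ contributes only trivial factors; but it is worth stating rather than absorbing into ``recognize the resulting sum.'' With that in hand, both the $N=1$ coefficient check and the telescoping of the $(q;q)_{n-\cdot}$ factors go through exactly as you describe.
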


\begin{proof}
The $q$-Whittaker polynomials in a finite alphabet $(x_1,\dots,x_N)$ may be recovered from Hall--Littlewood functions (in an infinite alphabet $x$) under the following specialization:
\begin{align*}
Q_{\lambda/\mu}(x_1,\dots,x_N;q,0)
=
P_{\lambda'/\mu'}(x;0,q)
\Big|_{p_k(x) \mapsto \frac{(-1)^{k+1}}{1-q^k} p_k(x_1,\dots,x_N)}
=:
\omega^{(N)} P_{\lambda'/\mu'}(x;0,q).
\end{align*}
We then have
\begin{align*}
Q_{\lambda/\mu}(x_1,\dots,x_N;q,0)
\frac{(q;q)_{n-\mu_1}}{(q;q)_{n-\lambda_1}}
=
\frac{(q;q)_{n-\ell(\mu')}}{(q;q)_{n-\ell(\lambda')}}
\cdot
\omega^{(N)} 
P_{\lambda'/\mu'}(x;0,q).
\end{align*}
By Lemma \ref{lem: HL int formula}, the right hand side of this expression admits the integral formula
\begin{multline*}
\omega^{(N)} 
\frac{(q;q)_{n-\ell(\mu')}}{(q;q)_{n-\ell(\lambda')}}
\cdot
P_{\lambda'/\mu'}(x;0,q)
=
\\
\omega^{(N)} 
\frac{(q;q)_{n-\ell(\mu')}}{n!(1-q)^n}
\cdot
\oint_{C} \frac{dz_1}{2\pi i z_1}
\cdots
\oint_{C} \frac{dz_n}{2\pi i z_n}
\Delta(z;0,q)
P_{\lambda'}(z;0,q)
Q_{\mu'}(z^{-1};0,q)
\prod_{i=1}^{n}
\prod_{j}
\frac{z_i-qx_j}{z_i-x_j},
\end{multline*}
where the contours are circles centred on the origin which enclose the points $x$. Now compute the action of $\omega^{(N)}$ on the Cauchy kernel, yielding
\begin{multline*}
\omega^{(N)} 
\frac{(q;q)_{n-\ell(\mu')}}{(q;q)_{n-\ell(\lambda')}}
\cdot
P_{\lambda'/\mu'}(x;0,q)
=
\\
\frac{(q;q)_{n-\ell(\mu')}}{n!(1-q)^n}
\cdot
\oint_{C} \frac{dz_1}{2\pi i z_1}
\cdots
\oint_{C} \frac{dz_n}{2\pi i z_n}
\Delta(z;0,q)
P_{\lambda'}(z;0,q)
Q_{\mu'}(z^{-1};0,q)
\prod_{i=1}^{n}
\prod_{j=1}^{N}
(1+x_j/z_i).
\end{multline*}
Extracting the factor $\prod_{j=1}^{N} x_j^n$, and defining new integration variables $u_i = 1/z_i$ for all $1 \leq i \leq n$, this becomes
\begin{multline*}
\omega^{(N)} 
\frac{(q;q)_{n-\ell(\mu')}}{(q;q)_{n-\ell(\lambda')}}
\cdot
P_{\lambda'/\mu'}(x;0,q)
=
\\
\prod_{j=1}^{N} x_j^n
\frac{(q;q)_{n-\ell(\mu')}}{n!(1-q)^n}
\cdot
\oint_{C} \frac{du_1}{2\pi i u_1}
\cdots
\oint_{C} \frac{du_n}{2\pi i u_n}
\Delta(u;0,q)
\\
\prod_{i=1}^{n} u_i^N
P_{\lambda'}(u^{-1};0,q)
Q_{\mu'}(u;0,q)
\prod_{i=1}^{n}
\prod_{j=1}^{N}
(1+x_j^{-1}/u_i).
\end{multline*}
It remains to redistribute overall $q$-dependent factors, and absorb $\prod_{i=1}^{n} u_i^N$ within the second Hall--Littlewood polynomial in the integrand:
\begin{multline*}
\omega^{(N)} 
\frac{(q;q)_{n-\ell(\mu')}}{(q;q)_{n-\ell(\lambda')}}
\cdot
P_{\lambda'/\mu'}(x;0,q)
=
\\
\prod_{j=1}^{N} x_j^n
\frac{(q;q)_{n-\ell(\mu')}}{n!(1-q)^n}
\frac{b_{\mu'}(0,q)}{b_{\lambda'}(0,q)}
\cdot
\oint_{C} \frac{du_1}{2\pi i u_1}
\cdots
\oint_{C} \frac{du_n}{2\pi i u_n}
\Delta(u;0,q)
\\
Q_{\lambda'}(u^{-1};0,q)
P_{\mu'+N^n}(u;0,q)
\prod_{i=1}^{n}
\prod_{j=1}^{N}
(1+x_j^{-1}/u_i),
\end{multline*}
and the right hand side may then be rewritten to yield
\begin{align*}
\omega^{(N)} 
\frac{(q;q)_{n-\ell(\mu')}}{(q;q)_{n-\ell(\lambda')}}
\cdot
P_{\lambda'/\mu'}(x;0,q)
&=
\frac{b_{\mu'}(0,q)}{b_{\lambda'}(0,q)}
\frac{(q;q)_{n-\ell(\mu')}}{(q;q)_{n-\ell(\mu'+N^n)}}
\prod_{j=1}^{N} x_j^n
\cdot
\omega^{(N)}_{x^{-1}}
P_{(\mu'+N^n)/\lambda'}(x^{-1};0,q)
\\
&=
\frac{b_{\mu'}(0,q)}{b_{\lambda'}(0,q)}
(q;q)_{n-\ell(\mu')}
\prod_{j=1}^{N} x_j^n
\cdot
Q_{(n^N,\mu)/\lambda}(x^{-1};q,0),
\end{align*}
using the fact that $\ell(\mu'+N^n) = n$ and $(\mu'+N^n)'=(n^N,\mu)$. The proof is completed by noting that
\begin{align*}
\frac{b_{\mu'}(0,q)}{b_{\lambda'}(0,q)}
(q;q)_{n-\ell(\mu')}
=
\frac{b_{\lambda}(q,0)}{b_{\mu}(q,0)} (q;q)_{n-\mu_1}
=
\frac{b_{\lambda}(q,0)}{b_{(n^N,\mu)}(q,0)}.
\end{align*}

\end{proof}

\subsection{\texorpdfstring{$(q,u)$}{(q,u)} symmetry}
We are now ready to prove Theorem \ref{thm: qt sym fn}. We begin by defining
\begin{equation*}
    Z_{n,N}(q,u;x)=\sum_{\mu: \mu_1 \leq n}\frac{1}{(q;q)_{n-\mu_1}}u^{|\mu|}P_{(n^N,\mu)/\mu}(x;q,0) .
\end{equation*}

\begin{lemma}
\label{lem: prob after complement}
Let $a=(a_1,\dotsc, a_M)$ and $b=(b_1,\dotsc, b_N)$. Let $\lambda\sim \mathbb{PW}_{q,u}^{a,b}$, and let $\chi$ be an independent $q$-geometric random variable. Then we have
\begin{equation*}
    \PP(\lambda_1+\chi\leq n)=\frac{(q;q)_\infty\prod_{j=1}^Nb_j^{n}}{\Phi(a,b;q,0,u)}Z_{n,N}(q,0,u;(a,b^{-1})),
\end{equation*}
where $(a,b^{-1})$ indicates that the two alphabets are concatenated.
\end{lemma}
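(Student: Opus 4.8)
The plan is to start from the definition of the periodic $q$-Whittaker measure and rewrite the event $\{\lambda_1 + \chi \leq n\}$ using the complementation property of Proposition~\ref{prop: complement}. First I would write out $\PP(\lambda_1 + \chi \leq n)$ explicitly: since $\chi$ is $q$-geometric and independent of $\lambda$, we have $\PP(\chi \leq n - \lambda_1) = \sum_{k=0}^{n-\lambda_1} q^k \frac{(q;q)_\infty}{(q;q)_k}$. The $q$-binomial theorem gives $\sum_{k=0}^{m} \frac{q^k}{(q;q)_k} \cdot \frac{1}{(q;q)_{\text{?}}}$-type simplifications; more precisely I expect the identity $\sum_{k=0}^{m} \frac{q^k (q;q)_\infty}{(q;q)_k} = \frac{(q;q)_\infty}{(q;q)_m}$ (a standard telescoping/$q$-series identity), so that $\PP(\chi \leq m) = \frac{(q;q)_\infty}{(q;q)_m}$, and hence $\PP(\lambda_1 + \chi \leq n) = (q;q)_\infty \sum_{\lambda : \lambda_1 \leq n} \frac{\PP(\lambda)}{(q;q)_{n-\lambda_1}}$.

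Next I would substitute the definition $\PP(\lambda) = \frac{1}{\Phi(a,b;q,0,u)} \sum_\mu u^{|\mu|} P_{\lambda/\mu}(a;q,0) Q_{\lambda/\mu}(b;q,0)$, so that
\begin{equation*}
    \PP(\lambda_1 + \chi \leq n) = \frac{(q;q)_\infty}{\Phi(a,b;q,0,u)} \sum_{\mu} u^{|\mu|} \sum_{\lambda : \mu \subseteq \lambda,\, \lambda_1 \leq n} \frac{1}{(q;q)_{n-\lambda_1}} P_{\lambda/\mu}(a;q,0) Q_{\lambda/\mu}(b;q,0).
\end{equation*}
Now I apply Proposition~\ref{prop: complement} with the alphabet $x = b = (b_1,\dots,b_N)$ to replace $Q_{\lambda/\mu}(b;q,0)$: the proposition gives $\frac{(q;q)_{n-\mu_1}}{(q;q)_{n-\lambda_1}} Q_{\lambda/\mu}(b;q,0) = P_{(n^N,\mu)/\lambda}(b^{-1};q,0) \prod_j b_j^n$, so that $\frac{1}{(q;q)_{n-\lambda_1}} Q_{\lambda/\mu}(b;q,0) = \frac{1}{(q;q)_{n-\mu_1}} P_{(n^N,\mu)/\lambda}(b^{-1};q,0) \prod_j b_j^n$. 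The factor $\frac{1}{(q;q)_{n-\mu_1}}$ and $\prod_j b_j^n$ no longer depend on $\lambda$, so they come out of the inner sum, leaving $\sum_{\lambda} P_{\lambda/\mu}(a;q,0) P_{(n^N,\mu)/\lambda}(b^{-1};q,0)$, which is exactly the branching rule for the skew $q$-Whittaker polynomial $P_{(n^N,\mu)/\mu}(a, b^{-1}; q, 0)$ (concatenating the two alphabets, summing over the intermediate partition $\lambda$).

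Assembling the pieces, I get
\begin{equation*}
    \PP(\lambda_1 + \chi \leq n) = \frac{(q;q)_\infty \prod_j b_j^n}{\Phi(a,b;q,0,u)} \sum_{\mu : \mu_1 \leq n} \frac{u^{|\mu|}}{(q;q)_{n-\mu_1}} P_{(n^N,\mu)/\mu}(a, b^{-1}; q, 0),
\end{equation*}
and the sum on the right is precisely $Z_{n,N}(q, 0, u; (a, b^{-1}))$ by its definition (note the definition writes $Z_{n,N}(q,u;x)$ but the lemma statement uses $Z_{n,N}(q,0,u;\cdot)$, matching the three-parameter $t=0$ convention). The main obstacle I anticipate is bookkeeping: I must be careful that the constraint in Proposition~\ref{prop: complement} (namely $\lambda_1 \leq n$, with $\mu \subseteq \lambda$) is exactly the summation range, that the $q$-geometric tail identity is applied correctly, and that $P_{\lambda/\mu}(a;q,0)$ is supported on $\lambda_1 \leq n$ whenever the complement term is nonzero — but since $(n^N,\mu)/\lambda$ requires $\lambda \subseteq (n^N,\mu)$ which forces $\lambda_1 \leq n$ automatically, the constraint is self-enforcing and no terms are lost. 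One should also double-check that summing $\lambda$ over \emph{all} partitions (rather than just $\mu \subseteq \lambda \subseteq (n^N,\mu)$) is harmless because $P_{\lambda/\mu}$ vanishes unless $\mu \subseteq \lambda$ and $P_{(n^N,\mu)/\lambda}$ vanishes unless $\lambda \subseteq (n^N,\mu)$.
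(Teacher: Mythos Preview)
Your proposal is correct and follows essentially the same approach as the paper: use $\PP(\chi\leq m)=(q;q)_\infty/(q;q)_m$, apply Proposition~\ref{prop: complement} to $Q_{\lambda/\mu}(b;q,0)$, and then invoke the branching rule to collapse the sum over $\lambda$ into $P_{(n^N,\mu)/\mu}(a,b^{-1};q,0)$. The only minor omission is that Proposition~\ref{prop: complement} requires $N\geq 1$; the paper handles the degenerate case $N=0$ separately (where $Q_{\lambda/\mu}$ in an empty alphabet forces $\lambda=\mu$, so the sum reduces directly to $Z_{n,0}(q,u;a)$), but this is a trivial addendum to your argument.
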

\begin{proof}
We begin by computing
\begin{equation*}
    \frac{1}{(q;q)_\infty}\Phi(a,b;q,0,u)\PP(\lambda_1+\chi\leq n)=\sum_{\lambda,\mu:\lambda_1\leq n}\frac{1}{(q;q)_{n-\lambda_1}}u^{|\mu|}P_{\lambda/\mu}(a;q,0)Q_{\lambda/\mu}(b;q,0),
\end{equation*}
using the fact that $\PP(\chi\leq k)=\frac{(q;q)_\infty}{(q;q)_{k}}$. If $N=0$, then the statement is clear, as $Q_{\lambda/\mu}(b;q,0)=0$ unless $\lambda=\mu$, immediately giving the expression $Z_{n,N}(q,u;a)$ since $Q_{\mu/\mu}(b;q,0)=1$. We thus assume that $N\geq 1$.

We now use Proposition \ref{prop: complement} on $Q_{\lambda/\mu}$, giving the expression
\begin{equation*}
    \frac{1}{(q;q)_\infty}\Phi(a,b;q,0,u)\PP(\lambda_1+\chi\leq n)=\prod_{j=1}^Nb_j^n\sum_{\lambda,\mu:\lambda_1\leq n}\frac{1}{(q;q)_{n-\mu_1}}u^{|\mu|}P_{\lambda/\mu}(a;q,0)P_{(n^N,\mu)/\lambda}(b^{-1};q,0). 
\end{equation*}
Finally, we may use the branching rule to compute the sum over $\lambda$, which gives the desired equality.
\end{proof}
Notice that after inverting the $b$ variables and clearing denominators, $\PP(\lambda_1+\chi\leq n)$ is actually completely symmetric in the combined alphabet $(a,b)$. We now give a formula for $Z_{n,N}(q,u;x)$.

\begin{lemma}
\label{lem: Z formula}
We have
\begin{equation*}
    Z_{n,N}(q,u;x)=\frac{(1-qu)^n}{n!(1-q)^n(1-u)^n}\oint_{C}\frac{dz_1}{2\pi i z_1}\dotsm \oint_C\frac{dz_n}{2\pi i z_n}\prod_{i=1}^n z_i^N\prod_{i,j}(1+z_i^{-1}x_j)\widetilde{\Delta}(z;q,u),
\end{equation*}
where
\begin{equation*}
    \widetilde{\Delta}(z;q,u)=\prod_{i\neq j}\frac{(1-quz_iz_j^{-1})(1-z_iz_j^{-1})}{(1-qz_iz_j^{-1})(1-uz_iz_j^{-1})}.
\end{equation*}
This can either be interpreted as a formal statement or as contour integrals, with $C$ a positively oriented circle centered at $0$ chosen to include all $x_i$.
\end{lemma}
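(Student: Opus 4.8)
\textbf{Proof proposal for Lemma \ref{lem: Z formula}.}

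The plan is to compute $Z_{n,N}(q,u;x)$ directly from its definition as a sum over partitions $\mu$ with $\mu_1\le n$, by first re-expressing the skew $q$-Whittaker polynomial $P_{(n^N,\mu)/\mu}(x;q,0)$ in a form amenable to the $\langle\,,\,\rangle_n'$ inner product, and then summing the resulting geometric-type series over $\mu$ against a Cauchy kernel. First I would apply the complementation identity of Proposition \ref{prop: complement} in reverse: since $(n^N,\mu)$ has first part $n$, the factor $\frac{(q;q)_{n-\mu_1}}{(q;q)_{n-n}}Q_{(n^N,\mu)/\mu'}$... more precisely, I would instead go through the $q$-Whittaker contour-integral formula. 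Recall that $P_{\lambda/\mu}(x;q,0)$ is obtained from a skew Hall--Littlewood polynomial $P_{\lambda'/\mu'}(\,\cdot\,;0,q)$ via the homomorphism $\omega^{(N)}$ of the proof of Proposition \ref{prop: complement}, and that Lemma \ref{lem: HL int formula} gives a contour-integral representation of the latter. Applying this to $\lambda=(n^N,\mu)$, whose conjugate $\lambda'=(\mu'+N^n)$ has length exactly $n$, we get
\begin{equation*}
\frac{1}{(q;q)_{n-\mu_1}}P_{(n^N,\mu)/\mu}(x;q,0)=\frac{1}{n!(1-q)^n}\oint\cdots\oint \Delta(z;0,q)\,P_{\mu'+N^n}(z;0,q)Q_{\mu'}(z^{-1};0,q)\prod_{i}\prod_{j}(1+x_j/z_i)\,\frac{dz}{(2\pi i)^n z_1\cdots z_n}
\end{equation*}
up to bookkeeping of the $b_{\mu'}$-factors, exactly as in the proof above; here $P_{\mu'+N^n}(z;0,q)=\prod_i z_i^N\,P_{\mu'}(z;0,q)$ since adding a full column of height $n$ to a partition of length $\le n$ just multiplies the Hall--Littlewood polynomial by $\prod z_i^N$.

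The second step is to carry out the sum over $\mu$. After substituting the above into $Z_{n,N}$, the $\mu$-dependence lives entirely inside $u^{|\mu|}P_{\mu'}(z;0,q)Q_{\mu'}(z^{-1};0,q)$ (the prefactor ratios of $b$'s and $(q;q)$'s should telescope against $u^{|\mu|}$ cleanly — this is the routine-but-delicate bookkeeping). Reindexing by $\nu=\mu'$ and using $u^{|\mu|}=u^{|\nu|}$ together with homogeneity, $\sum_\nu P_\nu(z;0,q)Q_\nu(uz^{-1};0,q)=\Pi(z,uz^{-1};0,q)=\prod_{i,j}\frac{(1-qu z_iz_j^{-1})}{(1-uz_iz_j^{-1})}$ by the Hall--Littlewood Cauchy identity. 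Combining this with $\Delta(z;0,q)=\prod_{i\ne j}\frac{(1-z_iz_j^{-1})}{(1-qz_iz_j^{-1})}$ produces precisely $\widetilde\Delta(z;q,u)$, while the diagonal $i=j$ terms of the Cauchy product contribute the scalar $\big(\tfrac{1-qu}{1-u}\big)^n$, which together with the $\frac{1}{n!(1-q)^n}$ already present gives the claimed constant $\frac{(1-qu)^n}{n!(1-q)^n(1-u)^n}$.

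Finally I would justify the interchange of sum and integral: either work formally, interpreting $\oint$ as extraction of the constant term in a well-defined formal power series in the $x_j$ (and in $u$), in which case the manipulation is automatic; or, for $x_j\in\C$ small, note the contours can be taken as circles about $0$ enclosing all $x_j$, and the Hall--Littlewood Cauchy series converges on a neighborhood of such contours provided $|u|<1$ and the $|z_i|$ are comparable, so dominated convergence applies. The main obstacle I anticipate is not the analysis but the exact matching of constants: tracking the $b_{\mu'}(0,q)/b_{\lambda'}(0,q)$ factors, the $(q;q)_{n-\mu_1}$ versus $(q;q)_{n-\ell(\mu'+N^n)}=(q;q)_0$ normalizations coming from Lemma \ref{lem: HL int formula}, and the passage between $P$ and $Q$ normalizations, so that everything collapses to the single clean factor $(1-qu)^n/(n!(1-q)^n(1-u)^n)$ and no residual $\mu$-dependent constant survives the summation. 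Once the Cauchy identity is applied this is forced, but it requires the same careful redistribution of $q$-dependent factors seen at the end of the proof of Proposition \ref{prop: complement}.
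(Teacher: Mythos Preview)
Your approach is essentially the same as the paper's: pass to the Hall--Littlewood side via the integral formula of Lemma \ref{lem: HL int formula}, use $P_{\mu'+N^n}(z;0,q)=\prod_i z_i^N\,P_{\mu'}(z;0,q)$, sum over $\mu'$ by the Cauchy identity to produce $\Pi(z,uz^{-1};0,q)$, and collect the diagonal factors into $(1-qu)^n/(1-u)^n$. The paper sidesteps the $b$-factor bookkeeping you flag as the main obstacle by first observing the clean identity $\frac{1}{(q;q)_{n-\mu_1}}P_{(n^N,\mu)/\mu}(x;q,0)=Q_{(n^N,\mu)/\mu}(x;q,0)$ for $N\ge1$ and then applying $\omega_{0,q}$ once globally (so no $b_{\mu'}$ ratios ever appear), with the $N=0$ case treated separately.
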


\begin{proof}
Again, there is a slight difference in the proof if $N=0$, and so for now we assume that $N\geq 1$. Then
\begin{equation}
\label{eq: Z second formula}
    Z_{n,N}(q,u;x) = \sum_{\mu: \mu_1 \leq n} Q_{(n^N,\mu)/\mu}(x;q,0) u^{|\mu|},
\end{equation}
as
\begin{equation}
\label{eq: P to Q}
    \frac{1}{(q;q)_{n-\mu_1}}P_{(n^N,\mu)/\mu}(x;q,0)=Q_{(n^N,\mu)/\mu}(x;q,0).
\end{equation}
We then write
\begin{equation*}
    Z_{n,N}(q,u;x)=\omega_{0,q}  \sum_{\lambda: l(\lambda) \leq n} P_{\lambda+N^n/\lambda}(x;0,q) u^{|\lambda|},
\end{equation*}
where the summand now features a Hall--Littlewood polynomial and $\omega_{0,q}$ is the Macdonald involution. We now use the integral formula given by Lemma \ref{lem: HL int formula}, the fact that
\begin{equation*}
    P_{\lambda+N^n}(z;0,q)=\prod_{i=1}^n z_i^N P_\lambda(z;0,q),
\end{equation*}
and the Cauchy identity. As $(q;q)_{n-l(\lambda+N^n)}=1$, this gives
\begin{equation*}
\begin{split}
    &Z_{n,N}(q,u;x)
    \\=&\omega_{0,q}\sum_{\lambda}\frac{1}{n!(1-q)^n}\oint_{C} \frac{dz_1}{2\pi i z_1}\dotsm \oint_{C}\frac{dz_n}{2\pi i z_n}P_{\lambda+N^n}(z;0,q)Q_\lambda(uz^{-1};0,q)\Pi(z^{-1},x;0,q)\Delta(z;0,q)
    \\=&\omega_{0,q}\sum_{\lambda}\frac{1}{n!(1-q)^n}\oint_{C} \frac{dz_1}{2\pi i z_1}\dotsm \oint_{C}\frac{dz_n}{2\pi i z_n}\prod_i z_i^NP_\lambda(z;0,q)Q_\lambda(uz^{-1};0,q)\Pi(z^{-1},x;0,q)\Delta(z;0,q)
    \\=&\omega_{0,q}\frac{1}{n!(1-q)^n}\oint_{C} \frac{dz_1}{2\pi i z_1}\dotsm \oint_{C}\frac{dz_n}{2\pi i z_n}\prod_i z_i^N\Pi(z,uz^{-1};0,q)\Pi(z^{-1},x;0,q)\Delta(z;0,q),
\end{split}
\end{equation*}
where the restriction $l(\lambda)\leq n$ can be dropped as $Q_\lambda(uz^{-1};0,q)=0$ otherwise. At this point, we recognize that $\omega_{0,q}\Pi(z^{-1},x;0,q)=\prod_{i,j}(1+z_i^{-1}x_j)$ and
\begin{equation*}
    \frac{1}{(1-q)^n}\Pi(z,uz^{-1};0,q)\Delta(z;0,q)=\frac{1}{(1-q)^n}\prod_{i,j}\frac{1-quz_iz_j^{-1}}{1-uz_iz_j^{-1}}\prod_{i\neq j}\frac{1-z_iz_j^{-1}}{1-qz_iz_j^{-1}}.
\end{equation*}
Splitting the $i=j$ factors in the first product gives $\frac{(1-qu)^n}{(1-u)^n}$ and leads to the desired formula.

We now consider the $N=0$ case. Here, \eqref{eq: Z second formula} and \eqref{eq: P to Q} no longer hold, and instead, $P_{\mu/\mu}(x;q,0)=Q_{\mu/\mu}(x;q,0)=1$. Thus, the factor of $\frac{1}{(q;q)_{n-\mu_1}}$ stays. The rest of the proof then proceeds as before, except that when applying Lemma \ref{lem: HL int formula}, a factor of $(q;q)_{n-l(\lambda)}$ is gained where it was not before. Since $\lambda'=\mu$, this exactly cancels the previous factor, and so the contour integral formula remains valid.
\end{proof}

\begin{remark}
A series related to $Z_{n,N}(q,u;x)$ where the $q$-Whittaker polynomials are replaced with Hall--Littlewood polynomials evaluates to a Macdonald polynomial of shape $n^N$ (see Theorem \ref{thm: macdonald vtx model} and Proposition \ref{prop: PHL Macdonald evaluation}). It seems natural to ask whether the $Z_{n,N}$ belong to a family of symmetric functions.
\end{remark}

\begin{proof}[Proof of Theorem \ref{thm: qt sym fn}]
This follows immediately from Lemmas \ref{lem: prob after complement} and \ref{lem: Z formula}.
\end{proof}

% \begin{remark}
% Let $\Delta'(z;q,t)=\prod_{i\neq j}\frac{(1-qtz_iz_j^{-1})(1-z_iz_j^{-1})}{(1-qz_iz_j^{-1})(1-tz_iz_j^{-1})}$. One can try to define orthogonal symmetric polynomials with respect to this kernel, in the same way as Macdonald polynomials for instance. Are these nice in general (orthogonal, combinatorial formula, etc)? These should be $(q,u)$ symmetric in general.
% \end{remark}

\section{Vertex models for the \texorpdfstring{$q$}{q}-Whittaker--periodic Schur correspondence}
\label{sec: t=0}
In this section we give a vertex model interpretation of the $(q,u)$ symmetry of Theorem \ref{thm: qt sym}, in the special case $u=0$. Of course in this case, we recover the known identity of Imamura, Mucciconi, and Sasamoto \cite{IMS21}. While there are now many proofs of this fact, we hope that a vertex model interpretation will potentially lead to further extensions and insights.

Let us begin with a restatement of Theorem \ref{thm: qt sym} in the $u=0$ setting:

\begin{theorem}
\label{thm: evaluation t=0}
Let $a=(a_1,\dotsc, a_M)$ and $b=(b_1,\dotsc, b_N)$. Then if $\lambda\sim \mathbb{PSM}_q^{a,b}$, $\mu\sim\mathbb{PWM}_{q,0}^{a,b}$, and $\chi$ is an independent $q$-geometric random variable, we have
\begin{equation*}
    \PP(\lambda_1\leq n)=\PP(\mu_1+\chi\leq n),
\end{equation*}
or equivalently, as an identity of symmetric functions,
\begin{equation}
\label{eq:IMS}
    \sum_{\lambda,\mu:\lambda_1\leq n}q^{|\mu|}s_{\lambda/\mu}(a)s_{\lambda/\mu}(b)=\sum_{\mu,k:\mu_1+k\leq n}\frac{q^{k}}{(q;q)_k}P_{\mu}(a;q,0)Q_\mu(b;q,0).
\end{equation}
\end{theorem}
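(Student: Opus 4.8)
The plan is to obtain Theorem~\ref{thm: evaluation t=0} by specializing the general machinery of Section~\ref{sec: sym} to $u=0$, and then to separately verify that the periodic Schur side of the identity is exactly what that specialization produces. Concretely, the right-hand side of \eqref{eq:IMS} is, up to the normalization constants, precisely $\PP(\mu_1+\chi\le n)$ for $\mu\sim\mathbb{PWM}^{a,b}_{q,0}$ with $\chi$ an independent $q$-geometric variable: one checks this by writing $\PP(\chi\le k)=(q;q)_\infty/(q;q)_k$ and using that $\Phi(a,b;q,0,0)=\prod_{i,j}1/(a_ib_j;q)_\infty$, so the sum $\sum_{\mu,k:\mu_1+k\le n}\frac{q^k}{(q;q)_k}P_\mu(a;q,0)Q_\mu(b;q,0)$ is $\frac{1}{(q;q)_\infty}\Phi(a,b;q,0,0)\PP(\mu_1+\chi\le n)$. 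Thus the content of the theorem is the equality of symmetric functions \eqref{eq:IMS}, together with the observation that the left-hand side is $\frac{1}{(q;q)_\infty}\cdot(\text{normalization})\cdot\PP(\lambda_1\le n)$ for the periodic Schur measure $\mathbb{PSM}^{a,b}_q$ (here the winding parameter of the periodic Schur process plays the role of $q$, which is the source of the $(q,u)$ symmetry: $u=q$ in $\mathbb{PS}_u$).

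Next I would feed the $u=0$ case of Theorem~\ref{thm: qt sym fn} (equivalently Lemmas~\ref{lem: prob after complement} and \ref{lem: Z formula} at $u=0$) into the right-hand side of \eqref{eq:IMS}. Setting $u=0$ in Lemma~\ref{lem: Z formula} collapses $\widetilde\Delta(z;q,0)=\prod_{i\ne j}\frac{1-z_iz_j^{-1}}{1-qz_iz_j^{-1}}=\Delta(z;q,q)^{-1}\cdot(\text{something})$; more to the point it becomes the Hall--Littlewood-type Vandermonde, and the prefactor $\frac{(1-qu)^n}{n!(1-q)^n(1-u)^n}$ becomes $\frac{1}{n!(1-q)^n}$. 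So
\begin{equation*}
    Z_{n,N}(q,0;x)=\frac{1}{n!(1-q)^n}\oint_C\frac{dz_1}{2\pi iz_1}\dotsm\oint_C\frac{dz_n}{2\pi iz_n}\prod_{i=1}^nz_i^N\prod_{i,j}(1+z_i^{-1}x_j)\prod_{i\ne j}\frac{1-z_iz_j^{-1}}{1-qz_iz_j^{-1}}.
\end{equation*}
Plugging in $x=(a,b^{-1})$ and $\prod_j b_j^n\prod_{i=1}^nz_i^N\prod_{i,j}(1+z_i^{-1}b_j^{-1})=\prod_{i,j}(1+z_ib_j)$, this is manifestly the contour-integral form of the periodic Schur measure observable $\PP(\lambda_1\le n)$ for $\mathbb{PSM}^{a,b}_q$ — which is exactly the left-hand side of \eqref{eq:IMS} up to the partition functions. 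Matching the normalization constants (comparing $\Phi(a,b;q,0,0)$ against $\Phi(a,b;q,q,0)$, i.e. the $q$-Whittaker versus Schur periodic partition functions) then yields \eqref{eq:IMS}.

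Alternatively, and perhaps more cleanly for the reader, I would prove \eqref{eq:IMS} directly as a symmetric-function identity using the same ingredients: apply Proposition~\ref{prop: complement} at $q$ (with $u$ absent) to rewrite $q^{|\mu|}s_{\lambda/\mu}(b)$ on the Schur side, use the branching rule to collapse the intermediate sum, and identify the result with the $q$-Whittaker side via the known contour-integral formula for the periodic Schur measure (Borodin~\cite{B07}). The point that needs care — and the step I expect to be the main obstacle — is bookkeeping the $(q;q)$-type constants: the factor $\frac{(q;q)_{n-\mu_1}}{(q;q)_{n-\lambda_1}}$ from complementation, the ratio $b_\lambda/b_\mu$ of $b$-coefficients, the $\frac{q^k}{(q;q)_k}$ weight of the geometric shift, and the differing normalizations $\Phi(a,b;q,0,0)$ versus $\Phi(a,b;q,q,0)=\prod_{i,j}1/(a_ib_j;q)_\infty$ on the Schur side, all have to combine to give exactly the constant $1$ (equivalently, exactly $(q;q)_\infty$) in front. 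Once the constants are pinned down, the identity \eqref{eq:IMS} follows formally from Theorem~\ref{thm: qt sym fn} at $u=0$, and the probabilistic reformulation $\PP(\lambda_1\le n)=\PP(\mu_1+\chi\le n)$ is immediate. I would then remark that this recovers Theorem~10.11 of \cite{IMS21}, now as the boundary case $u=0$ of the more general $(q,u)$ symmetry, and that the vertex-model proof given in the rest of the section offers an independent and more structural derivation.
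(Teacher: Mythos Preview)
Your approach is correct in spirit but different from the paper's, and your execution has one soft spot worth fixing.

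\textbf{The approach.} You deduce Theorem~\ref{thm: evaluation t=0} from the Section~\ref{sec: sym} machinery. The cleanest way to do this is not via the contour integral at $u=0$ but directly via the symmetry of Theorem~\ref{thm: qt sym}: observe that $\mathbb{PWM}^{a,b}_{0,u}=\mathbb{PSM}^{a,b}_u$ (the $q$-Whittaker polynomials at $q=0$ are Schur polynomials) and that a $0$-geometric random variable is identically $0$. Then the $(q,u)$ symmetry gives
\[
\PP_{\mathbb{PWM}_{q,0}}(\mu_1+\chi_q\le n)=\PP_{\mathbb{PWM}_{0,q}}(\lambda_1+\chi_0\le n)=\PP_{\mathbb{PSM}_q}(\lambda_1\le n),
\]
which is exactly the probabilistic statement. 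The symmetric-function identity \eqref{eq:IMS} follows by unpacking both sides and checking the normalizations (your first paragraph does this correctly). Your route through the $u=0$ contour integral and the claim that it is ``manifestly'' the periodic Schur formula is a detour: nothing in the paper supplies that identification, and you would have to import it from \cite{B07} or rederive it. The symmetry argument avoids this entirely.

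\textbf{Comparison with the paper.} The paper's own proof in Section~\ref{sec: t=0} is deliberately independent of Section~\ref{sec: sym}: it introduces the two-alphabet polynomials $W_\lambda(x;q,t;y)$ of \cite{GW20}, builds a fermionic vertex model for them (Proposition~\ref{prop:W-PF}), and then shows that $\prod_i a_i^n\,W_{n^M}(a^{-1},b;q,0;0)$ equals $(q;q)_n$ times the Schur side (Proposition~\ref{prop: vtx model periodic schur}) while $\prod_i a_i^n\,W_{M^n}(0;0,q;a^{-1},b)$ equals the $q$-Whittaker side (Proposition~\ref{prop: vtx model q-whit}). The identity \eqref{eq:IMS} then drops out of the duality $W_\lambda(x;q,t;y)=W_{\lambda'}(y;t,q;x)$ applied to a rectangle. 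This is a genuinely different and more structural argument: your proof is shorter and leverages the general $(q,u)$ result, while the paper's proof isolates a concrete algebraic object (the $W_\lambda$) whose built-in symmetry explains the matching, and which the authors hope will extend to other settings.
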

Our proof will involve introducing another family of functions $W_\lambda$, originally studied in \cite{GW20}, for which we give a vertex model construction. These functions satisfy a certain symmetry, namely equation \eqref{eq:W-sym}, and this is the key property that ultimately allows the two sides of \eqref{eq:IMS} to be identified.

\subsection{The polynomials \texorpdfstring{$W_{\lambda}$}{Wlambda}}
Following \cite{GW20} we introduce another family of functions, symmetric in two separate alphabets $x$ and $y$. We define $W_\lambda(x;q,t;y)$ as the image of $J_{\lambda}(x;q,t)$ (the latter being the Macdonald polynomials in their integral form; see \cite[Section VI.8]{M79}) under the homomorphism $p_k(x)\mapsto \frac{p_k(x)-(-1)^kp_k(y)}{1-t^k}$. These polynomials satisfy the following symmetry (see Proposition 2 of \cite{GW20}) which is crucial to our approach:
\begin{equation}
\label{eq:W-sym}
W_{\lambda}(x;q,t;y)=W_{\lambda'}(y;t,q;x),  
\end{equation} 
and when $y=-tx$, one has that $W_{\lambda}(x;q,t;-tx)= J_\lambda(x;q,t)$ (this is immediate from the definition $W_{\lambda}(x;q,t;y)$; see also Proposition 3 of \cite{GW20}). We now describe a vertex model construction of these functions.\footnote{A lattice construction of $W_\lambda(x;q,t;y)$ was also obtained in \cite{GW20} in terms of a certain colored bosonic vertex model. Our approach here is slightly different, making use of the fermionic vertex models of \cite{ABW21}, so we spend some time to elaborate on the details.}

\subsubsection{Vertex weights}

Our vertex model will consist of colored arrows traversing a square grid; each arrow will have a color assigned to it, which is an integer label in the set $\{1,\dots,M\}$. Each vertex in the grid consists of the crossing of an oriented horizontal and vertical line, and the four edges connected to the point of intersection are denoted as bottom-incoming, left-incoming, top-outgoing and right-outgoing edges, respectively. Any edge of the lattice may support any number of arrows, but no color may be present more than once at any given edge. 

Let $A,B,C,D\in \{0,1\}^{M}$ be vectors representing the presence or absence of arrows of each color at the edges to the bottom, left, top, and right of a vertex, and fix two indeterminates $x,y$. The weight of a vertex is then defined as
\begin{equation}
\label{eq:weights}
\begin{split}
\begin{tikzpicture}[scale=0.5,baseline=(current bounding box.center),>=stealth]
\draw[lgray,line width=10pt] (-1,0) -- (1,0);
\draw[lgray,line width=10pt] (0,-1) -- (0,1);
\node[left] at (-1,0) {$B$};
\node[right] at (1,0) {$D$};
\node[below] at (0,-1) {$A$};
\node[above] at (0,1) {$C$};
\node[left] at (-1.8,0) {$(x,y) \rightarrow $};
\end{tikzpicture}
&= L_{x,y}(A,B;C,D)
=
\mathbf{1}_{A+B=C+D}
    x^{|D|-|V|}y^{|V|}t^{|V|(1-|D|)+{|B|\choose 2}+\phi(V,D-B)+\phi(D,C)}
    \\
    &
    \qquad
    \times
    \dfrac{\prod_{i=1}^{|D|-|V|}\left(\frac{y}{x}+t^{|V|+i-1}\right)}
    {\prod_{i=1}^{|B|-|V|}\left(\frac{y}{x}+t^{|V|+i-1}\right)}
    \prod_{i:B_i-D_i=1}\left(1+t^{-B_{(i,n]}-D_{[1,i)}}\frac{y}{x}\right)
\end{split}
\end{equation}
where $V=(V_1,\dotsc, V_M)$ is a vector with components given by $V_i=\min(A_i,B_i,C_i,D_i)$ for all $1\leq i \leq M$, $\phi(X,Y) = \sum_{i<j} X_i Y_j$ for any two vectors $X,Y$ and $X_S = \sum_{i \in S} X_i$ for any set $S$. The condition that the weights vanish unless $A+B=C+D$ enforces that colored arrows are conserved as they pass through a vertex. At all times in this section we identify the vertex shown on the left hand side of \eqref{eq:weights} with the function given on the right, and pass between the two quantities in this identification freely. 

\begin{remark}
Although it is not immediately clear from the above definition, the weights \eqref{eq:weights} are in fact polynomials in $x,y,t$ for all $A,B,C,D \in \{0,1\}^M$, although this fact will have no bearing on any aspect of our construction.

The weights \eqref{eq:weights} also obey a Yang--Baxter equation, which is a consequence of the fact that they are obtained via fusion of the weights given in Figure 8.2 of \cite{ABW21}, but we shall also not require this in what follows.
\end{remark}

\subsubsection{Lattice construction}

Fix two alphabets $x=(x_1,\dotsc, x_N)$ and $y=(y_1,\dotsc, y_N)$. We also fix a partition $\lambda$, and let $\lambda^-$ denote the composition obtained by reversing the parts of $\lambda$. Consider the vertex model \eqref{eq:weights} on a semi-infinite lattice, where the number of colors is identified with the length of $\lambda$; namely, $M=l(\lambda)$. The lattice has infinitely many columns, labelled from left to right by the elements of $\mathbb{Z}_{\geq 0}$, but only $N$ rows, with a pair of parameters $(x_i,y_i)$ assigned to the $i$th row (counted from bottom to top). 

Let us now specify the boundary conditions assigned to this lattice. No colored arrows enter the lattice along any of the left-incoming edges, nor leave it via its right-outgoing edges. All of the colors $\{1,\dots,M\}$ enter the lattice via the bottom-incoming edge of the $0$th column; the top-outgoing edge of this column is empty. For each $i \in \{1,\dots,M\}$, an arrow of color $i$ exits the lattice via the top-outgoing edge of column $\lambda^-_i$, and never reappears in any column to the right of this one. In each of the columns $1 \leq j < \lambda^-_i$ the color $i$ may perform a {\it winding}; this means that it both exits via the top-outgoing edge of this column, and re-enters it via its bottom-incoming edge\footnote{Due to the fact that each edge can only be used by a color once, color $i$ can only wind once in any given column, and never in the $0$th column or in column $\lambda^-_i$.}. We associate a winding fugacity of $q^{\lambda^-_i-j}$ to the event that color $i$ winds in column $1 \leq j < \lambda^-_i$. We define the partition function $Z_{\lambda}(x;q,t;y)$ to be the sum over all weighted configurations of arrows satisfying the prescribed boundary conditions, where the weight of a configuration is the product of weights \eqref{eq:weights} that occur at each vertex within the grid and the winding fugacities described above.

Translating the above setup into pictorial form, one has that
\begin{equation}
\label{eq:Z-picture}
Z_{\lambda}(x;q,t;y)
=
\begin{tikzpicture}[scale=0.8,baseline=(current bounding box.center),>=stealth]
\foreach\x in {0,...,6}{
\draw[lgray,line width=10pt] (\x,2) -- (\x,7);
}
\foreach\y in {3,...,6}{
\draw[lgray,line width=10pt] (-1,\y) -- (7,\y);
}
\draw[line width=6pt, ->] (0,1.75) -- (0,2.5);
%labels
\node[left] at (-1,6) {\tiny $\emptyset$};
\node[left] at (-1,4.5) {$\vdots$};
\node[left] at (-1,3) {\tiny $\emptyset$};
\node[right] at (7,6) {\tiny $\emptyset$};
\node[right] at (7,4.5) {$\vdots$};
\node[right] at (7,3) {\tiny $\emptyset$};
\node[left] at (-2,6) {$(x_N,y_N) \rightarrow$};
\node[left] at (-2,4.5) {$\vdots$};
\node[left] at (-2,3) {$(x_1,y_1) \rightarrow$};
\node[below] at (0,7.6) {\tiny $\emptyset$};
\node[below] at (1,7.75) {\tiny $\lambda_1^-$};
\node[below] at (3,7.75) {\tiny $\lambda_2^-$};
\node[below] at (4,7.75) {$\cdots$};
\node[below] at (5,7.75) {$\cdots$};
\node[below] at (6,7.75) {\tiny $\lambda_M^-$};
\node[below] at (0,1.5) {\tiny $e_{[1,M]}$};
\end{tikzpicture}
\end{equation}
where $e_{[1,M]} = \sum_{i=1}^M e_i$ with $e_i$ the $i$th Euclidean unit vector in $\mathbb{R}^M$, and where we recall that for each $i \in \{1,\dots,M\}$, the arrow of colour $i$ is allowed to wind in any column $j \in [1,\lambda^-_i)$, such a winding coming with an associated fugacity $q^{\lambda^-_i-j}$.

\begin{proposition}
\label{prop:W-PF}
For all partitions $\lambda$, one has that $Z_{\lambda}(x;q,t;y)=W_{\lambda}(x;q,t;y)$.
\end{proposition}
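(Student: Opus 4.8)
The plan is to identify the partition function $Z_\lambda(x;q,t;y)$ with $W_\lambda(x;q,t;y)$ by exploiting the defining property of $W_\lambda$ as the image of the integral-form Macdonald polynomial $J_\lambda(x;q,t)$ under the plethystic substitution $p_k \mapsto \frac{p_k(x)-(-1)^k p_k(y)}{1-t^k}$. Since $W_\lambda$ is determined by its value on arbitrary alphabets, it suffices to verify that $Z_\lambda$ transforms the right way under adding a variable to a row, and that it has the correct normalization/initial value in a base case. Concretely, I would first analyze a single row of the lattice: with boundary data fixed (a composition of colors entering from below, a composition exiting on top, empty left/right), the row transfer operator built from the weights $L_{x_i,y_i}$ should act on the space spanned by partitions/compositions as a known operator. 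The key computation is to show that summing a single row of weights over internal horizontal edge states, with the winding fugacities included, produces exactly the branching coefficient for $J$-normalized skew Macdonald functions in the alphabet $(x_i, y_i)$ — i.e., that one row implements multiplication by the generating series encoding $g_{\lambda/\mu}$ (the $Q$-type skew functions in integral normalization) evaluated appropriately, together with the $y$-deformation.

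The order of steps I would follow: (1) Set up the row-to-row transfer matrix $T(x_i,y_i)$ acting on functions indexed by the outgoing color configuration at the top of each column, and observe that the boundary conditions force the color-$i$ arrow to be "stored" in column $\lambda_i^-$; (2) Compute the single-row partition function — the sum over all horizontal edge states of one row of weights, including the winding fugacity $q^{\lambda_i^- - j}$ — and recognize it as the skew $q$-Whittaker-type coefficient; here the winding mechanism is precisely what inserts the geometric-series factors $\frac{1}{(q;q)_\bullet}$-type denominators that distinguish the $J$-normalization; (3) Check the Yang–Baxter-based or direct commutation that makes the iterated product over $N$ rows symmetric in the pairs $(x_i,y_i)$, so that $Z_\lambda$ is genuinely a symmetric function in $x$ (with $y$ along for the ride); (4) Verify the base case — either $N=0$ (empty lattice, $Z_\lambda = 0$ unless $\lambda = \emptyset$, matching $W_\emptyset = 1$) or a one-row, one-variable specialization where $Z_\lambda$ can be computed by hand and compared against $J_\lambda$ of a single variable; (5) Conclude by induction on $N$, since both $Z_\lambda$ and $W_\lambda$ satisfy the same branching recursion in the alphabet $x$ (with the $y_i$'s entering through the $y$-dependence of the weights) and agree on the base case.

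Alternatively, and perhaps more cleanly, I would try to match the construction directly to the fermionic vertex model of \cite{ABW21} referenced in the footnote: the weights \eqref{eq:weights} are obtained by fusion from the weights in Figure 8.2 of \cite{ABW21}, and \cite{ABW21} (or \cite{GW20}) presumably already identifies the relevant partition functions with $J$-normalized skew Macdonald functions $P_{\lambda/\mu}$ or $Q_{\lambda/\mu}$ in the $(x,y)$ alphabet; then the only new content is bookkeeping — showing that the specific boundary conditions (all colors entering at column $0$, color $i$ exiting at column $\lambda_i^-$, windings in columns $1 \le j < \lambda_i^-$ with fugacity $q^{\lambda_i^- - j}$) assemble these skew pieces, summed against the geometric winding series, into exactly the plethystic image defining $W_\lambda$. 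Tracking the precise power of $t$ and the ratios of products $\prod(\tfrac{y}{x} + t^{\cdot})$ through the row sum, and confirming they reproduce the integral-form normalization $b_\lambda$-type factors inside $J_\lambda$, would be the routine-but-delicate part.

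\textbf{Main obstacle.} I expect the hard part to be step (2): showing that the single-row partition function, with the winding fugacities $q^{\lambda_i^- - j}$ summed over all allowed windings, collapses to the correct skew Macdonald branching coefficient in integral normalization. The winding mechanism is somewhat unusual — it is the device that produces the $q$-Whittaker-flavored denominators out of what is otherwise a Hall–Littlewood-type ($t$-deformed) fermionic model — so pinning down exactly which geometric series is being summed, and verifying it matches the known plethystic/branching identity for $W_\lambda$ (equivalently, for $J_\lambda$ under the given substitution), is where the real work lies. The symmetry in $x$ and the base case should be comparatively straightforward, the former following from the Yang–Baxter equation alluded to in the remark (or from the symmetry already known for $W_\lambda$ once the branching is established).
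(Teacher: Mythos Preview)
Your second, ``alternative'' approach is the one the paper takes, but you have the emphasis in the wrong place and are missing the key trick. The paper does \emph{not} compute single-row transfer matrices or assemble skew branching pieces against winding series; it sidesteps that entirely.

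The actual argument is this. The map $p_k \mapsto \dfrac{p_k(x)-(-1)^k p_k(y)}{1-t^k}$ defining $W_\lambda$ from $J_\lambda$ can be realized concretely in two steps: first principally specialize a large alphabet into $N$ geometric progressions $x_j^{(i)} \mapsto x_i t^{j-1}$, $1 \le j \le L_i$, and then analytically continue by setting $t^{L_i} \mapsto -y_i/x_i$. One checks this directly on power sums: $p_k(x)^\dagger = \sum_i x_i^k \frac{1-t^{kL_i}}{1-t^k}$, and the substitution $t^{L_i} \mapsto -y_i/x_i$ gives exactly the desired expression. Now cite the result of \cite{ABW21} that an explicit partition function $\mathfrak{Z}_\lambda$ (with the same boundary conditions as $Z_\lambda$, but with the unfused $L=1$ weights) equals $J_\lambda$. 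Principal specialization of the row rapidities is precisely \emph{fusion} of the horizontal lines in the lattice: the boundary conditions are unchanged, but blocks of $L_i$ rows merge into single rows carrying fused weights. The remaining task is pure weight bookkeeping: check that the fused weights of \cite{ABW21}, after the substitution $t^{L_i} \mapsto -y_i/x_i$ and the normalizations already built into $\mathfrak{Z}_\lambda$, are exactly the $L_{x_i,y_i}$ of \eqref{eq:weights}. This is done by tracing through the explicit fused weight formula (Proposition 8.2.5 of \cite{ABW21}).

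So your worry about step (2) --- summing windings inside a single row to recover skew Macdonald branching coefficients --- never arises. The winding fugacities and the overall structure of $Z_\lambda$ are inherited wholesale from the $J_\lambda$ partition function in \cite{ABW21}; all that changes under fusion is the local vertex weight. Your first approach (row-by-row induction via branching) might be made to work, but it is substantially more laborious and is not what the paper does.
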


\begin{proof}
% The fact that $W_{\lambda}(x;q,t,;y)$ is given as the partition function of this vertex model is because these vertex weights are obtained from fusion of a model known to give the Macdonald polynomials which was studied in \cite{ABW21}. This fusion procedure amounts to starting with the alphabet $x$, and at each vertex, replacing $x_i$ by $(x_i,\dotsc, t^{L_i-1}x_i)$, noticing that the weights are then analytic in $t^{L_i}$, and replacing $t^{L_i}$ with $-\frac{b_i}{a_i}$ and $x_i$ with $a_i$ (while also $t$-symmetrizing the incoming arrows). This formally has the effect of taking $p_k(x)$ to $\frac{p_k(a)-(-1)^kp_k(b)}{1-t^k}$, which is exactly what we want to achieve. The weights are then given by a specialization of the ones in Proposition 8.5 of \cite{ABW21}. Since this process is non-trivial, we give some of the details.

Recall that $W_\lambda(x;q,t;y)$ is defined as the image of $J_{\lambda}(x;q,t)$ under the homomorphism $p_k(x)\mapsto \frac{p_k(x)-(-1)^kp_k(y)}{1-t^k}$. This homomorphism may be concretely realized as follows. Let the alphabet $x$ in $J_{\lambda}(x;q,t)$ be written as the union of $N$ smaller alphabets, of sizes $L_i \geq 1$, $1 \leq i \leq N$:
\begin{equation}
\label{eq:x-union}
x = 
\left(x^{(1)}_1,\dots,x^{(1)}_{L_1}\right) \cup \cdots \cup \left(x^{(N)}_1,\dots,x^{(N)}_{L_N}\right).
\end{equation}
From here perform the substitution $x^{(i)}_j \mapsto x_i t^{j-1}$ for all $1 \leq i \leq N$ and $1 \leq j \leq L_i$. We denote this specialization of the alphabet $x$ by the symbol $\dagger$. Then for any power sum $p_k(x)$ in the original alphabet, one has
\begin{equation*}
p_k(x)^{\dagger}
=
\sum_{i=1}^N x_i^k \frac{1-t^{kL_i}}{1-t^k},
\end{equation*}
and after performing the change of variables $t^{L_i} \mapsto -y_i/x_i$, we read off the relation
\begin{equation*}
p_k(x)^{\dagger} \Big|_{t^{L_i} \mapsto -y_i/x_i}
=
\sum_{i=1}^N \frac{x_i^k-(-1)^k y_i^k}{1-t^k}
=
\frac{p_k(x)-(-1)^kp_k(y)}{1-t^k},
\end{equation*}
which is exactly the homomorphism that maps $J_{\lambda}(x;q,t)$ to $W_\lambda(x;q,t;y)$. It follows that
\begin{equation}
\label{eq:J-spec}
J_{\lambda}(x;q,t)^{\dagger} \Big|_{t^{L_i} \mapsto -y_i/x_i}
=
W_\lambda(x;q,t;y),
\end{equation}
and we now examine the effect of the specializations \eqref{eq:J-spec} on a known vertex model formula for $J_{\lambda}(x;q,t)$, obtained in \cite{ABW21}. 

The partition function in question has very similar boundary conditions to those described above; see \cite[Figure 13.1]{ABW21} for the model being used, and \cite[Equation (15.1.4)]{ABW21} for the partition function itself. For concreteness, we shall denote the partition function of \cite[Equation (15.1.4)]{ABW21} by $\mathfrak{Z}_{\lambda}(x;q,t)$.\footnote{Equation (15.1.4) of \cite{ABW21} uses the partition label $\nu$ instead of $\lambda$, and so $\mathfrak{Z}_{\lambda}(x;q,t)$ shall mean the corresponding object obtained by the notational change $\nu \mapsto \lambda$.} By Theorem 15.1.2 of \cite{ABW21}, one has that $\mathfrak{Z}_{\lambda}(x;q,t)=J_\lambda(x;q,t)$. 

It remains to carry out the specializations \eqref{eq:J-spec} on $\mathfrak{Z}_{\lambda}(x;q,t)$ to obtain a partition function representation of $W_\lambda(x;q,t;y)$. We first note that the weights used in $\mathfrak{Z}_{\lambda}(x;q,t)$ are derived as a specialization of the doubly fused weights given in \cite[Equation (4.3.5)]{ABW21}, with $L=1$, $q^M=s^{-2}$, $y=s$, $q \mapsto t$,\footnote{Throughout most of \cite{ABW21}, $q$ is used as the quantum deformation parameter, which is the reason for this substitution.} dividing each vertex weight by $(-s)^d$ (where $d$ is the number of arrows leaving from the right edge of the vertex), and then taking $s\to 0$. Writing the alphabet $x$ in the form \eqref{eq:x-union} and then principally specializing $x^{(i)}_j \mapsto x_i t^{j-1}$ for all $1 \leq i \leq N$ and $1 \leq j \leq L_i$ instigates {\it fusion} of the horizontal lines of the lattice; the partition function $\mathfrak{Z}_{\lambda}(x;q,t)$ retains its boundary conditions, however, the weights get mapped to the ones described above with $L=L_i$.

In summary, one has that
\begin{equation}
\label{eq:Z-match}
    W_\lambda(x;q,t;y)
    =
    \mathfrak{Z}_{\lambda}(x;q,t)^{\dagger} \Big|_{t^{L_i} \mapsto -y_i/x_i}
\end{equation}
and in view of the above discussion, the right hand side of \eqref{eq:Z-match} is a partition function of the form \eqref{eq:Z-picture} using weights given in \cite[Equation (4.3.5)]{ABW21}, with $L=L_i$, $q^M=s^{-2}$, $y=s$, $q \mapsto t$, $t^{L_i} \mapsto -y_i/x_i$, dividing each vertex weight by $(-s)^d$ (where $d$ is the number of arrows leaving from the right edge of the vertex), and then taking $s\to 0$. These weights are also computed in \cite{ABW21}; they are given by \cite[Proposition 8.2.5]{ABW21} under the substitutions $q \mapsto t$, $r^{-2} \mapsto t^{L_i} = -y_i/x_i$ , $x \mapsto x_i$. Carrying out these substitutions one obtains exactly the weights $L_{x_i,y_i}(A,B;C,D)$ from \eqref{eq:weights}.
\end{proof}

\subsection{The case \texorpdfstring{$\lambda=n^M$}{lambda=nM}}

Since it will have particular significance in what follows, we record here the special case of Proposition \ref{prop:W-PF} where $\lambda$ has rectangular shape. Setting $\lambda=n^M$ for some $n \geq 1$, all of the $M$ colored arrows exit the lattice \eqref{eq:Z-picture} via the top-outgoing edge of column $n$, and  combining Proposition \ref{prop:W-PF} and equation \eqref{eq:Z-picture} we have that
\begin{equation}
\label{eq:rectangle-PF}
W_{n^M}(x;q,t;y)
=
\sum_{P}
\prod_{j=1}^{n-1}
\prod_{i=1}^{M}
q^{jP^{(j)}_i}
\times
\begin{tikzpicture}[scale=0.8,baseline=(current bounding box.center),>=stealth]
\foreach\x in {0,...,6}{
\draw[lgray,line width=10pt] (\x,2) -- (\x,7);
}
\foreach\y in {3,...,6}{
\draw[lgray,line width=10pt] (-1,\y) -- (7,\y);
}
\draw[line width=6pt,->] (0,1.75) -- (0,2.5);
\draw[line width=6pt,->] (6,6.5) -- (6,7.25);
%labels
\node[left] at (-1,6) {\tiny $\emptyset$};
\node[left] at (-1,4.5) {$\vdots$};
\node[left] at (-1,3) {\tiny $\emptyset$};
\node[right] at (7,6) {\tiny $\emptyset$};
\node[right] at (7,4.5) {$\vdots$};
\node[right] at (7,3) {\tiny $\emptyset$};
\node[left] at (-1.5,6) {$(x_N,y_N) \rightarrow$};
\node[left] at (-1.5,4.5) {$\vdots$};
\node[left] at (-1.5,3) {$(x_1,y_1) \rightarrow$};
\node[above] at (6,7.25) {\tiny $e_{[1,M]}$};
\node[below] at (5,7.6) {\tiny $P^{(1)}$};
\node[below] at (4,7.6) {\tiny $P^{(2)}$};
\node[below] at (3,7.6) {\tiny $\cdots$};
\node[below] at (2,7.6) {\tiny $\cdots$};
\node[below] at (1,7.6) {\tiny $P^{(n-1)}$};
\node[below] at (0,7.6) {\tiny $\emptyset$};
\node[below] at (0,1.5) {\tiny $e_{[1,M]}$};
\node[below] at (6,2) {\tiny $\emptyset$};
\node[below] at (5,2) {\tiny $P^{(1)}$};
\node[below] at (4,2) {\tiny $P^{(2)}$};
\node[below] at (3,2) {\tiny $\cdots$};
\node[below] at (2,2) {\tiny $\cdots$};
\node[below] at (1,2) {\tiny $P^{(n-1)}$};
\end{tikzpicture}
\end{equation}
using the vertex weights \eqref{eq:weights}, where the sum is taken over all vectors $P^{(j)} \in \{0,1\}^M$, $1 \leq j \leq n-1$ and the factor $q^{jP^{(j)}_i}$ is the usual fugacity associated to the winding of color $i$ in column $n-j$ of the lattice. In this setting, no winding takes place either in the $0$th or $n$th columns of the partition function.

We are now ready to proceed to our vertex model proof of Theorem \ref{thm: evaluation t=0}.

\subsection{Partition function for periodic Schur measure}
In this section, we evaluate the left hand side of equation \eqref{eq:IMS} as a partition function. 

\begin{proposition}
\label{prop: vtx model periodic schur}
Fix two alphabets $a=(a_1,\dots,a_M)$ and $b=(b_1,\dots,b_N)$, and let $(a^{-1},b)$ denote the combined alphabet $(a_1^{-1},\dots,a_M^{-1},b_1,\dots,b_N)$. We have that
\begin{equation}
\label{eq:IMS LHS}
\prod_{i=1}^M a_i^nW_{n^M}(a^{-1},b;q,0;0)=(q;q)_n\sum_{\lambda,\mu:\lambda_1\leq n}q^{|\mu|}s_{\lambda/\mu}(a)s_{\lambda/\mu}(b)
\end{equation}
where the left hand side denotes the polynomial $W_{\lambda}(x;q,t;y)$ with $\lambda=n^M$, $t=0$, $x=(a^{-1},b)$ and $y=0$ (the latter being an alphabet of length $M+N$ in which every parameter is $0$). 
\end{proposition}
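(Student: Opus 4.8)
The plan is to evaluate the partition function $W_{n^M}(a^{-1},b;q,0;0)$ directly from its lattice description \eqref{eq:rectangle-PF}, after specializing $t=0$, $y=0$, and taking the $M+N$ row rapidities to be $x=(a^{-1},b)$, so that the bottom $M$ rows of the lattice carry $a_1^{-1},\dots,a_M^{-1}$ and the top $N$ rows carry $b_1,\dots,b_N$. First I would work out the degeneration of the vertex weights \eqref{eq:weights}: the factor $y^{|V|}$ forces $V=0$ when $y=0$, the surviving $t$-power collapses to $t^{\phi(D,C)+\binom{|D|}{2}}$, and setting $t=0$ then forces $|D|\le 1$ and $\phi(D,C)=0$, leaving the weight $\mathbf{1}_{A+B=C+D}\,x^{|D|}$. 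Hence every horizontal edge carries at most one colored arrow (this propagates rightward from the empty left boundary), so a configuration is just a family of $M$ colored up-right lattice paths, the $c$th running from the bottom of column $0$ to the top of column $n$, with $\phi(D,C)=0$ and $V=0$ forcing the colors to nest; each rightward step in a row contributes that row's rapidity.

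Next I would identify this free-fermionic model with a vertex-model realization of the periodic Schur process. Cutting the lattice along the horizontal line separating the bottom $M$ rows from the top $N$ rows, the vertical occupation profile along the cut (read across columns $0,\dots,n$) is a partition $\lambda$ with $\lambda_1\le n$, and the portion of the configuration in the top $N$ rows encodes, in a weight-preserving bijective way, a semistandard tableau of shape $\lambda/\mu$ with entries in $\{1,\dots,N\}$, where $\mu$ is the occupation profile at the wrap-around seam; summing over these tableaux gives $s_{\lambda/\mu}(b)$. The portion in the bottom $M$ rows, whose steps carry the inverted rapidities $a_i^{-1}$, encodes the box-complement of a semistandard tableau of shape $\lambda/\mu$ inside the $M\times n$ rectangle; multiplying by the prefactor $\prod_{i=1}^M a_i^n$ and applying the Schur ($t=0$) case of the complementation identity of Proposition \ref{prop: complement} converts this portion into $s_{\lambda/\mu}(a)$.

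It then remains to deal with the winding fugacities. For fixed $\lambda,\mu$ and fixed tableaux the windings in columns $1,\dots,n-1$ can be inserted independently subject to the path constraints, and summing the fugacity $\prod_{j=1}^{n-1}\prod_{i=1}^M q^{jP^{(j)}_i}$ over all admissible windings produces $q^{|\mu|}$ up to the global normalization $(q;q)_n$; this constant is the one in $W_{n^M}(x;q,0;0)=J_{n^M}(x;q,0)=(q;q)_n\,Q_{n^M}(x;q,0)$ (using $W_\lambda(x;q,t;-tx)=J_\lambda(x;q,t)$ at $t=0$, and the short computation that the normalization constant relating $J_{n^M}$ to $Q_{n^M}$ at $t=0$ equals $(q;q)_n$), and it is exactly the $(q;q)_n$ on the right of \eqref{eq:IMS LHS}. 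Summing over $\mu\subseteq\lambda$ with $\lambda_1\le n$ and over the two tableaux then yields $\prod_{i=1}^M a_i^n\,W_{n^M}(a^{-1},b;q,0;0)=(q;q)_n\sum_{\lambda,\mu:\lambda_1\le n}q^{|\mu|}s_{\lambda/\mu}(a)s_{\lambda/\mu}(b)$.

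The hard part will be the combinatorial bookkeeping in the last two steps: setting up the weight-preserving bijection between the free-fermionic colored lattice configurations and (box-complemented) pairs of semistandard tableaux of a common skew shape $\lambda/\mu$, and verifying that the winding generating function contributes precisely $q^{|\mu|}$ with the containment $\mu\subseteq\lambda$ and the prefactor $(q;q)_n$ falling out automatically. Both use crucially that $\lambda=n^M$ is rectangular, so that all $M$ colors exit the lattice in the single column $n$ --- this is what makes the bottom block of rows a clean box-complement and makes the winding count homogeneous across color labels. (A purely symmetric-function argument via $W_{n^M}(x;q,0;0)=(q;q)_n\,Q_{n^M}(x;q,0)$, the branching rule, and Proposition \ref{prop: complement} is also available, but it most naturally produces the $q$-Whittaker side of \eqref{eq:IMS}, whereas \eqref{eq:IMS LHS} is the periodic Schur side, so the lattice identification above is the natural route here.)
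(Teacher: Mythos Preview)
Your high-level plan --- specialize the weights at $t=0$, $y=0$, cut the lattice along the horizontal line between the $a^{-1}$ rows and the $b$ rows, and identify the two halves with $s_{\lambda/\mu}(a)$ and $s_{\lambda/\mu}(b)$ --- matches the paper's strategy exactly, and your computation of the degenerate weight $\mathbf{1}_{A+B=C+D}\mathbf{1}_{V=0}\mathbf{1}_{\phi(D,C+D)=0}\,x^{|D|}$ is correct (your exponent $\phi(D,C)+\binom{|D|}{2}$ equals the paper's $\phi(D,C+D)$). The complementation of the bottom half is also fine; the paper does this at the level of horizontal edge occupations rather than via Proposition~\ref{prop: complement}, but the two are equivalent in the Schur case.

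The genuine gap is in your treatment of the colours and the windings. In the colored model at $t=0$, only the largest colour present may exit a vertex to the right; this means that once the \emph{uncoloured} horizontal data in a column is fixed, the assignment of colours to exit rows is unique, and the winding vector $P^{(j)}$ is \emph{determined} (it is the minimal one). So there is no independent ``sum over admissible windings'' to perform in the coloured model --- your statement that ``the windings \dots can be inserted independently subject to the path constraints'' is false for the coloured partition function \eqref{eq:rectangle-PF}. Consequently, the wrap-around profile $\mu$ is a function of the two tableaux, not a free summation variable, and the coloured partition function does not directly organise itself as $\sum_{\lambda,\mu}q^{|\mu|}s_{\lambda/\mu}(a)s_{\lambda/\mu}(b)$.

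The missing ingredient is a colour-projection step (Lemma~\ref{lem: periodic schur color blind} in the paper): one shows, column by column, that the coloured column weight equals the \emph{uncoloured} column weight (where windings are genuinely free, summed over $m\ge 0$) times $(1-q^{n-j})$. Multiplying over columns gives the relation $W_{n^M}(x;q,0;0)=(q;q)_{n-1}\,\mathfrak{Z}_M$ (and an adjustment of the $0$th column yields the remaining factor $1-q^n$). Only in this uncoloured model do the windings sum freely, with generating function $q^{|\mu|}$ over partitions $\mu$ with $\mu_1\le n$; this is the step that makes $\mu$ an independent variable and simultaneously produces $(q;q)_n$. Your side remark that $(q;q)_n$ is ``the constant in $J_{n^M}=(q;q)_n Q_{n^M}$ at $t=0$'' is true as a symmetric-function fact, but it does not substitute for the lattice argument: identifying the coloured model directly with $Q_{n^M}$ would land you on the $q$-Whittaker side of \eqref{eq:IMS} (as you note), not the periodic Schur side required here.
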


We first give the specialization of the vertex model for $W_{n^M}$ which is relevant for this section. Setting $t=0$, $y=0$ in \eqref{eq:weights}, one observes a huge simplification of the model: 
\begin{equation}
\label{eq:weights t=0}
    L_{x,0}(A,B;C,D)\Big|_{t=0}=\mathbf{1}_{A+B=C+D}\mathbf{1}_{V=0}\mathbf{1}_{\phi(D,C+D)=0}x^{|D|},
\end{equation}
where the constraint that $V=0$ arises from the term $y^{|V|}$ and the fact that we set $y=0$, while the requirement that $\phi(D,C+D)=0$ comes from combining all $t$ exponents after setting $y=0$ in \eqref{eq:weights}, which yields $t^{\phi(D,C+D)}$. At the level of the model itself, $\phi(D,C+D)=0$ is equivalent to requiring that at most one arrow exits from the right edge of a vertex, and it must be the largest color to enter the vertex. Therefore, in what follows, we treat the horizontal edges as supporting at most one arrow. We now wish to project away the colors in the model.

To that end, we consider an uncolored vertex model with an arbitrary number of arrows allowed to cross each vertical edge, and at most one arrow allowed to cross each horizontal edge. The weights of this model are defined as follows:
\begin{equation}
\label{eq:uncolored}
\begin{tikzpicture}[scale=0.5,baseline=(current bounding box.center),>=stealth]
\draw[lgray,ultra thick] (-1,0) -- (1,0);
\draw[lgray,line width=10pt] (0,-1) -- (0,1);
\node[left] at (-1,0) {$b$};
\node[right] at (1,0) {$d$};
\node[below] at (0,-1) {$a$};
\node[above] at (0,1) {$c$};
\node[left] at (-1.8,0) {$x \rightarrow $};
\end{tikzpicture}
=
\mathbf{1}_{a+b=c+d}
x^d,
\qquad
a,c \in \mathbb{Z}_{\geq 0},
\qquad
b,d \in \{0,1\},
\end{equation}
where we now use a thin horizontal line in reflection of the fact that these edges support at most one arrow. Using this model, we construct a partition function $\mathfrak{Z}_M(q;t)$ in a similar vein to \eqref{eq:rectangle-PF}: $M$ arrows enter via the bottom-incoming edge of the $0$th column and leave via the top-outgoing edge of the $n$th column. Arrows are allowed to wind an arbitrary number of times except in the $0$th and $n$th columns. The $i$th row of the lattice has a parameter $x_i$ associated to it, so that each vertex within that row has a weight given by \eqref{eq:uncolored} with $x \mapsto x_i$. The weight of a configuration is then the product of the vertex weights that comprise it, and a factor of $q^{n-j}$ each time an arrow winds in column $j$. More precisely, we have that
\begin{equation}
\label{eq:Z-uncolored}
\mathfrak{Z}_M(q;t)
=
\sum_{m}
\prod_{j=1}^{n-1}
q^{j m_j}
\times
\begin{tikzpicture}[scale=0.8,baseline=(current bounding box.center),>=stealth]
\foreach\x in {0,...,6}{
\draw[lgray,line width=10pt] (\x,2) -- (\x,7);
}
\foreach\y in {3,...,6}{
\draw[lgray,ultra thick] (-1,\y) -- (7,\y);
}
\draw[line width=6pt, ->] (0,1.75) -- (0,2.5);
\draw[line width=6pt, ->] (6,6.5) -- (6,7.25);
%labels
\node[left] at (-1.5,6) {$x_N \rightarrow $};
\node[left] at (-1,4.5) {$\vdots$};
\node[left] at (-1.5,3) {$x_1 \rightarrow $};
\node[left] at (-1,3) {\tiny $0$};
\node[left] at (-1,6) {\tiny $0$};
\node[right] at (7,3) {\tiny $0$};
\node[right] at (7,4.5) {\tiny $\vdots$};
\node[right] at (7,6) {\tiny $0$};
\node[below] at (0,7.75) {\tiny $0$};
\node[below] at (1,7.75) {\tiny $m_{n-1}$};
\node[below] at (3,7.75) {\tiny $\cdots$};
\node[below] at (4,7.75) {\tiny $m_2$};
\node[below] at (5,7.75) {\tiny $m_1$};
\node[below] at (6,8) {\tiny $M$};
\node[below] at (0,1.5) {\tiny $M$};
\node[below] at (1,2) {\tiny $m_{n-1}$};
\node[below] at (3,2) {\tiny $\cdots$};
\node[below] at (4,2) {\tiny $m_2$};
\node[below] at (5,2) {\tiny $m_1$};
\node[below] at (6,2) {\tiny $0$};
\end{tikzpicture}
\end{equation}
using the vertex weights \eqref{eq:uncolored}, where the sum is taken over all integers $m_j \in \mathbb{Z}_{\geq0}$, $1 \leq j \leq n-1$, and the factor $q^{jm_j}$ is the aforementioned fugacity associated to $m_j$ windings in column $n-j$ of the lattice.

We now show that the two partition functions \eqref{eq:rectangle-PF} and \eqref{eq:Z-uncolored} are related as
\begin{equation}
\label{eq:col-uncol-match}
W_{n^M}(x;q,0;0)
=
(q;q)_{n-1}
\mathfrak{Z}_M(q;t).   
\end{equation} 
This will be done by matching the two sides of the equation column by column, using the following lemma.

\begin{lemma}
\label{lem: periodic schur color blind}
Fix an integer $1 \leq j \leq n-1$ and consider the $j$th column of the partition function \eqref{eq:rectangle-PF} at $t=0$, $y_i=0$ for all $1 \leq i \leq N$. Assume that arrows of color $i_1,\dots,i_M$ enter this column via rows $1 \leq s_1 < \cdots < s_M \leq N$ respectively. Then given a set of outgoing edges $1 \leq r_1 < \cdots < r_M \leq N$ of the column, there exists a unique way to assign the colors $i_1,\dots,i_M$ to these edges such that the weight of the column is non-zero. 

Further, the weight of this column is then equal to the weight of the $j$th column in the uncolored partition function \eqref{eq:Z-uncolored}, in which arrows enter the column via rows $1 \leq s_1 < \cdots < s_M \leq N$ and leave it via rows $1 \leq r_1 < \cdots < r_M \leq N$, up to an overall multiplicative factor of $1-q^{n-j}$.

A similar statement matches the $0$th and $n$th columns of \eqref{eq:rectangle-PF} and \eqref{eq:Z-uncolored}, except that colors enter/exit from the bottom/top respectively rather than the left/right, and no extra factor is needed.
\end{lemma}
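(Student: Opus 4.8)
The plan is to prove Lemma \ref{lem: periodic schur color blind} by direct analysis of the simplified $t=0$, $y=0$ weights in \eqref{eq:weights t=0}, reducing everything to a combinatorial statement about how colors can route through a single column. First I would recall that after the specialization the colored weight $L_{x,0}(A,B;C,D)|_{t=0}$ is nonzero only when $A+B=C+D$ (arrow conservation), $V=0$ (no color is present on all four edges of a vertex), and $\phi(D,C+D)=0$; as already observed in the text, this last condition forces at most one arrow on each horizontal edge, and when present it must carry the maximal color entering the vertex. Since horizontal edges support at most one color, walking up the $j$th column from bottom to top, at each vertex there is exactly one horizontal slot: the color occupying it is whichever color among those ``currently available'' is largest, and it can either stay on the horizontal edge or turn upward. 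This shows the column is a deterministic shuffle: given the multiset of incoming rows $\{s_1<\dots<s_M\}$ carrying colors $i_1<\dots<i_M$ (note the incoming colors come sorted by row precisely because of how they were emitted in lower columns — I would make this ordering hypothesis explicit or derive it inductively), and given the target set of outgoing rows $\{r_1<\dots<r_M\}$, there is a unique color assignment making all vertex weights nonzero, namely the order-preserving bijection $s_k\mapsto r_k$ paired with color $i_k$. The uniqueness is the content of the first assertion.

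Next I would compute the resulting weight. Under \eqref{eq:weights t=0} the only surviving multiplicative factor at a vertex is $x^{|D|}$, i.e. $x_i$ for each row $i$ in which an arrow occupies the right-outgoing edge. In the $j$th column, an arrow occupies the right edge of the vertex in row $i$ exactly when at least one arrow is still ``in transit'' up the column at that height, which happens for rows strictly between the lowest incoming/outgoing event and... more precisely, the total power of $x_i$ accumulated equals the number of colors that pass through (enter below and have not yet exited) — but crucially this count depends only on the sets $\{s_k\}$ and $\{r_k\}$, not on the colors, so it exactly matches the weight $x^d$ bookkeeping of the uncolored model \eqref{eq:uncolored} in the $j$th column with incoming rows $\{s_k\}$ and outgoing rows $\{r_k\}$. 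This gives the equality of weights up to the claimed scalar.

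The scalar factor $1-q^{n-j}$ is where the winding fugacities enter, and I expect this to be the main subtlety. In the colored model, color $i$ is allowed to wind in column $j$ only if $j<\lambda^-_i$ — but for $\lambda=n^M$ all $\lambda^-_i=n$, so every color is allowed to wind in every column $1\le j\le n-1$, each winding carrying fugacity $q^{n-j}$. A ``winding'' of color $i$ in column $j$ means that color occupies both the top-outgoing and bottom-incoming edges of that column. Here is the key point I would emphasize: in the uncolored model the integer $m_j$ records the number of arrows that exit the top of column $n-j$ and re-enter its bottom, and there is no constraint tying which colors do this. But in the colored model, because the coloring of the column is forced, exactly one specific color is forced to occupy the top edge / bottom edge pair — namely the largest color among those present. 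Wait: I need to reexamine — actually multiple colors can wind, and the set of colors that wind in a given column is constrained. I would argue that once the incoming and outgoing row-sets of the column are fixed, \emph{every} subset of the passing colors can independently wind (since winding just means reusing the top and bottom external edges, which is color-consistent as long as the routing inside is), and the sum over which colors wind, each weighted by $q^{n-j}$, produces $\prod(\text{over windable colors})(1+q^{n-j})$ — but this does not match $1-q^{n-j}$. The resolution, which I would work out carefully, is that the uncolored integer $m_j$ overcounts: a given value of $m_j$ in the uncolored model corresponds to $\binom{\#\text{passing}}{m_j}$ choices of which colors wind in the colored model, and resumming the geometric-type series $\sum_{m_j\ge 0} q^{jm_j}(\dots)$ against these binomials, together with the $(q;q)_{n-1}$ normalization in \eqref{eq:col-uncol-match}, collapses to the stated single factor $1-q^{n-j}$ per column. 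Making this resummation precise — identifying exactly which colors are ``windable'' in a given column configuration and checking the binomial/geometric identity column by column — is the step I expect to require the most care, and I would isolate it as a short sub-computation. The treatment of the $0$th and $n$th columns is easier: there no winding is allowed, colors enter/exit vertically with the same forced order-preserving routing, and the weight matches on the nose with no extra scalar, which I would dispatch in a sentence.
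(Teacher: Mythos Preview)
There is a genuine gap in your treatment of the winding, and it is exactly the place where the factor $1-q^{n-j}$ arises. In the colored model, once the incoming rows $\{s_k\}$ and outgoing rows $\{r_k\}$ are fixed, the greedy rule (largest present color exits right at each exit row) determines not only the color-to-exit-row assignment but also the set of winding colors \emph{uniquely}: a color is forced to wind if and only if it enters above the top exit row $r_M$, so the number of windings is the fixed integer $s=|\{k: s_k>r_M\}|$, and the colored column weight is the single monomial $q^{(n-j)s}\prod_k x_{r_k}$. There is no freedom to choose a subset of colors to wind, so the product $\prod(1+q^{n-j})$ you write and the subsequent binomial resummation are both off track.

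On the uncolored side the picture is the mirror of this: the winding count $m_j$ is a free non-negative integer, not a subset size, with the only constraint being $m_j\ge s$ (so that every vertical edge carries a non-negative number of arrows). Since the uncolored vertex weight $x^d$ contributes $\prod_k x_{r_k}$ independently of $m_j$, summing the fugacity over $m_j\ge s$ is a plain geometric series and yields $\frac{q^{(n-j)s}}{1-q^{n-j}}\prod_k x_{r_k}$; comparing with the colored weight gives the claimed factor $1-q^{n-j}$. (A smaller slip: your description of the $x$-weight as counting ``colors in transit'' is not right. One has $|D|\le 1$ at every vertex under \eqref{eq:weights t=0}, so the column weight is simply one factor of $x_{r_k}$ per exit row, matching \eqref{eq:uncolored} on the nose.)
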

\begin{proof}
The uniqueness of the assignment of outgoing colors is because for each vertex in the model \eqref{eq:weights t=0}, only the largest color can exit to the right. Then the largest color must exit the column at the first possible opportunity, and inductively this shows that the assignment of colors to the exit rows is unique. It is also clear that this assignment respects the condition that $V=0$. Moreover, since this is a greedy assignment of exit rows, winding occurs exactly once for each entrance row with no exit row above it (recall that each such winding comes with fugacity $q^{n-j}$). It follows that the weight of the column in this unique configuration is given by 
$q^{(n-j)s}\prod_{i=1}^{M} x_{r_i}$, where $s = |\{k: s_k > r_M\}|$.

Now consider the same column within the uncolored partition function \eqref{eq:Z-uncolored}. Notice that the number of times $s$ that arrows wind in the colored model is the minimal number of windings that occurs in the uncolored model. One finds that in the uncolored model, the weight of the column is the same except that the arrows can wind an arbitrary number of times before exiting. It follows that the weight of the column in the uncolored model is given by $\sum_{i=s}^{\infty}q^{(n-j)i}\prod_{i=1}^{M} x_{r_i}$, and the match of two weights (modulo the factor of $1-q^{n-j}$) is immediate after summing the latter geometric series.

In the $0$th and $n$th columns, the same argument applies, but there is no winding so no extra factor is needed.
\end{proof}

Using Lemma \ref{lem: periodic schur color blind} across columns $0,\dots,n$ of the partition functions \eqref{eq:rectangle-PF} and \eqref{eq:Z-uncolored}, we conclude that \eqref{eq:col-uncol-match} holds.

\begin{proof}[Proof of Proposition \ref{prop: vtx model periodic schur}]
We begin by writing the left hand side of \eqref{eq:IMS LHS} as a partition function of the uncolored model \eqref{eq:uncolored}, as described above. Our main tool is equation \eqref{eq:col-uncol-match} with $x \mapsto (a^{-1},b)$. We first adjust $0$th column in this formula, by allowing the arrows to enter from the bottom $M$ rows on their left-incoming edges, and allowing winding with a factor of $q^n$ for each time an arrow winds. It is easy to see that this only changes the partition function by a factor of $1-q^n$. We then have

\begin{equation}
\label{eq:schur-final-match}
\prod_{i=1}^M a_i^nW_{n^M}(a^{-1},b;q,0;0)=
(q;q)_n\prod_{i=1}^M a_i^n
\sum_{m}
\prod_{j=1}^{n} q^{j m_j}
\times
\begin{tikzpicture}[scale=0.8,baseline=(current bounding box.center),>=stealth]
\foreach\x in {0,...,6}{
\draw[lgray,line width=10pt] (\x,0) -- (\x,7);
}
\foreach\y in {1,...,6}{
\draw[lgray,ultra thick] (-1,\y) -- (7,\y);
}
\draw[ultra thick, ->] (-1,1) -- (0,1);
\draw[ultra thick, ->] (-1,2) -- (0,2);
\draw[ultra thick, ->] (-1,3) -- (0,3);
\draw[line width=6pt, ->] (6,6.5) -- (6,7.25);
%labels
\node[left] at (-1,3) {$a_M^{-1}$};
\node[left] at (-1,2) {$\vdots$};
\node[left] at (-1,1) {$a_1^{-1}$};
\node[left] at (-1,6) {$b_N$};
\node[left] at (-1,5) {$\vdots$};
\node[left] at (-1,4) {$b_1$};
\node[below] at (0,7.75) {$m_n$};
\node[below] at (3,7.75) {$\cdots$};
\node[below] at (4,7.75) { $m_2$};
\node[below] at (5,7.75) {$m_1$};
\node[below] at (6,8) {$M$};
\node[below] at (0,0) {$m_{n}$};
\node[below] at (3,0) {$\cdots$};
\node[below] at (4,0) {$m_2$};
\node[below] at (5,0) {$m_1$};
\node[below] at (6,0) {$0$};
\end{tikzpicture}
\end{equation}
where the sum is taken over all integers $m_j \in \mathbb{Z}_{\geq 0}$, $1 \leq j \leq n$.

Let $\lambda$ and $\mu$ be partitions such that $m_k(\lambda)$ denotes the number of vertical arrows crossing the middle of the $(n-k)$th column of \eqref{eq:schur-final-match}, and $m_k(\mu)$ the number of vertical arrows crossing the top of the $(n-k)$th column. We remove the $n$th column from the picture entirely, which is permissible since it has weight $1$ in all configurations. As such, arrows which previously exited the lattice via the top-outgoing edge of this column now exit the lattice freely via the right-outgoing edges of the penultimate column.

Next, we perform a well-known complementation procedure on the horizontal edges of the bottom $M$ rows of the resulting partition function, replacing any horizontal arrow with no arrow and vice versa in any configuration. This has the effect of making arrows move left and up within this portion of the lattice, rather than right and up. Moreover, distributing the $\prod_{i=1}^{M} a_i^n$ factor among the vertices, we see the same change reflected in the vertex weights \eqref{eq:uncolored}, replacing $a_i^{-d}$ with $a_i^{1-d}$ (where $d$ is the state on the right horizontal edge of a vertex).

The result of all of these considerations is the equation

\begin{equation*}
\prod_{i=1}^M a_i^nW_{n^M}(a^{-1},b;q,0;0)=
(q;q)_n \sum_{\lambda,\mu:\lambda_1\leq n}q^{|\mu|} \times
\begin{tikzpicture}[scale=0.8,baseline=(current bounding box.center),>=stealth]

\foreach\x in {0,...,6}{
\draw[lred,line width=10pt] (\x,0) -- (\x,3.5);
}
\foreach\x in {0,...,6}{
\draw[lgray,line width=10pt] (\x,3.5) -- (\x,7);
}
\foreach\y in {4,...,6}{
\draw[lgray,ultra thick] (-1,\y) -- (7,\y);
}
\foreach\y in {1,...,3}{
\draw[lred,ultra thick] (-1,\y) -- (7,\y);
}
\draw[ultra thick, ->] (7,1) -- (6,1);
\draw[ultra thick, ->] (7,3) -- (6,3);
\draw[ultra thick, ->] (6,5) -- (7,5);
\draw[ultra thick, ->] (6,6) -- (7,6);
%labels
\node[left] at (-1,3) {$a_M$};
\node[left] at (-1,2) {$\vdots$};
\node[left] at (-1,1) {$a_1$};
\node[left] at (-1,6) {$b_N$};
\node[left] at (-1,5) {$\vdots$};
\node[left] at (-1,4) {$b_1$};
\node[below] at (0,7.75) {\tiny$m_n(\mu)$};
\node[below] at (2,7.75) {$\cdots$};
\node[below] at (6,7.75) {\tiny$m_1(\mu)$};
\node[below] at (5,7.75) {\tiny$m_2(\mu)$};
\node[below] at (0,-0.25) {\tiny$m_n(\mu)$};
\node[below] at (2,-0.25) {$\cdots$};
\node[below] at (6,-0.25) {\tiny$m_1(\mu)$};
\node[below] at (5,-0.25) {\tiny$m_2(\mu)$};
\node[below] at (0,4) {\tiny$m_n(\lambda)$};
\node[below] at (2,4) {$\cdots$};
\node[below] at (6,4) {\tiny$m_1(\lambda)$};
\node[below] at (5,4) {\tiny$m_2(\lambda)$};
\end{tikzpicture}
\end{equation*}
using the fact that $\prod_{j=1}^{n}q^{j m_j} = q^{|\mu|}$. The right edges of this lattice are free, and considered to be summed over all possible ways for arrows to enter/exit. The red vertices used in the the bottom half of the lattice have modified weights that reflect the complementation procedure explained above; namely, they are given by \eqref{eq:uncolored} with $b \mapsto 1-b$ and $d \mapsto 1-d$.

We now see that the bottom half of the partition function evaluates to $s_{\lambda/\mu}(a)$ and the top half evaluates to $s_{\lambda/\mu}(b)$ (the top half of the model is a special case of Lemma \ref{lem: boson HL} with $t=0$, and the bottom is as well, up to reflection in the horizontal Cartesian axis and reversing the direction of all arrows). We recognize the right hand side of \eqref{eq:IMS LHS}.
\end{proof}

\subsection{Partition function for \texorpdfstring{$q$}{q}-Whittaker measure}

In this section, we evaluate the right hand side of \eqref{eq:IMS} as a partition function. Together with Proposition \ref{prop: vtx model periodic schur}, this allows us to complete the proof of Theorem \ref{thm: evaluation t=0}.
\begin{proposition}
\label{prop: vtx model q-whit}
Fix two alphabets $a=(a_1,\dots,a_M)$ and $b=(b_1,\dots,b_N)$, and let $(a^{-1},b)$ denote the combined alphabet $(a_1^{-1},\dots,a_M^{-1},b_1,\dots,b_N)$. We have that
\begin{equation}
\label{eq:W-qW}
    \prod_{i=1}^M a_i^nW_{M^n}(0;0,q;a^{-1},b)=\sum_{\mu:\mu_1\leq n}\frac{(q;q)_n}{(q;q)_{n-\mu_1}}P_\mu(a;q,0)Q_\mu(b;q,0),
\end{equation}
where the left hand side denotes the polynomial $W_{\lambda}(x;q,t;y)$ with $\lambda=M^n$, $q=0$, $t \mapsto q$, $y=(a^{-1},b)$ and $x=0$ (the latter being an alphabet of length $M+N$ in which every parameter is $0$).
\end{proposition}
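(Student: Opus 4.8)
The plan is to mimic the proof of Proposition \ref{prop: vtx model periodic schur}, but now using the rectangular-shape partition function \eqref{eq:rectangle-PF} with the roles of the two alphabets $x$ and $y$ exchanged via the symmetry \eqref{eq:W-sym}. Concretely, $W_{M^n}(0;0,q;(a^{-1},b)) = W_{n^M}((a^{-1},b);q,0;0)$ by \eqref{eq:W-sym}, so the left-hand side of \eqref{eq:W-qW} is $\prod_i a_i^n W_{n^M}((a^{-1},b);q,0;0)$, which by Proposition \ref{prop:W-PF} and \eqref{eq:rectangle-PF} is the partition function of the colored model with $M$ colors running through $N + $ (many) rows, with winding fugacities. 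Wait—I should be careful: in \eqref{eq:W-qW} the left side is $W_{M^n}$, i.e. $\lambda = M^n$, with $q \mapsto 0$, $t \mapsto q$, $x = 0$, $y = (a^{-1},b)$; applying \eqref{eq:W-sym} gives $W_{(M^n)'}((a^{-1},b);q,0;0) = W_{n^M}((a^{-1},b);q,0;0)$, which is exactly the object appearing on the left of \eqref{eq:IMS LHS} in Proposition \ref{prop: vtx model periodic schur}. So the two propositions compute the \emph{same} polynomial $\prod_i a_i^n W_{n^M}((a^{-1},b);q,0;0)$ in two different ways, and Theorem \ref{thm: evaluation t=0} will then follow by comparing \eqref{eq:IMS LHS} and \eqref{eq:W-qW}. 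Thus the task here is to re-evaluate the \emph{uncolored} reduction $\mathfrak{Z}_M(q;t)$ of \eqref{eq:col-uncol-match}, but reading it as a $q$-Whittaker / deformed-boson partition function rather than as a Schur one.

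First I would invoke \eqref{eq:col-uncol-match}, $W_{n^M}((a^{-1},b);q,0;0) = (q;q)_{n-1}\,\mathfrak{Z}_M(q;t)$, together with Lemma \ref{lem: periodic schur color blind}, so that the whole content reduces to evaluating the uncolored partition function \eqref{eq:Z-uncolored} (with row parameters $(a^{-1},b)$, after the same "enlarge the $0$th column" trick used at \eqref{eq:schur-final-match} to convert $(q;q)_{n-1}$ into $(q;q)_n$ and add the extra winding column). Then, exactly as in the proof of Proposition \ref{prop: vtx model periodic schur}, I would drop the trivial $n$th column, perform the horizontal complementation on the bottom $M$ rows (inverting the $a_i^{-1}$ back to $a_i$ and absorbing $\prod a_i^n$), and record $q^{|\mu|} = \prod_j q^{j m_j}$. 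The difference from the Schur case is only in how the top and bottom halves are recognized: instead of Schur skew functions, the uncolored vertex weights \eqref{eq:uncolored} (which support arbitrarily many arrows on vertical edges and at most one on horizontal edges) should be matched, via the $q$-boson / deformed-boson correspondence — presumably via a $q$-analogue of Lemma \ref{lem: boson HL}, or the $q$-Whittaker specialization thereof — to $q$-Whittaker skew functions $P_{\lambda/\mu}(a;q,0)$ and $Q_{\lambda/\mu}(b;q,0)$ instead of Schur ones. Here the key point is that the same uncolored lattice can be read in two ways: "horizontally" it gives the Schur evaluation of Proposition \ref{prop: vtx model periodic schur}, and "vertically" (stacking columns as branching steps in a $q$-Whittaker process) it gives the $q$-Whittaker evaluation. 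Summing over the internal partition and using the $q$-Whittaker branching rule and Cauchy identity collapses the product to $\sum_{\mu : \mu_1 \le n} \frac{(q;q)_n}{(q;q)_{n-\mu_1}} P_\mu(a;q,0) Q_\mu(b;q,0)$, with the $\frac{1}{(q;q)_{n-\mu_1}}$ arising from the $q$-geometric winding sum (geometric series in $q^{n-j}$) and the complementation identity \eqref{eq: P to Q}.

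The main obstacle I anticipate is the column-by-column identification of the uncolored model \eqref{eq:uncolored} with a transfer operator for $q$-Whittaker branching. In the Schur reading one sums a \emph{finite} geometric series of windings per column and gets a factor $1-q^{n-j}$; in the $q$-Whittaker reading one instead wants the windings within a column, together with the vertex weights of that column, to assemble into a $q$-deformed vertex weight whose row-to-row transfer reproduces $P_{\lambda^{(j)}/\lambda^{(j-1)}}(\cdot;q,0)$. Making this precise requires identifying \eqref{eq:uncolored}-with-windings as (a specialization of) the $q$-boson vertex model whose partition functions are skew $q$-Whittaker polynomials — i.e. checking that after resumming the geometric winding series column-wise one lands on exactly the weights in the relevant lemma (the $t=0$, or $q$-Whittaker, case of Lemma \ref{lem: boson HL}). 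Once that combinatorial dictionary is set up, the remaining steps (branching rule, Cauchy summation, bookkeeping of $(q;q)$-factors and the $\prod a_i^n$) are routine, entirely parallel to Proposition \ref{prop: vtx model periodic schur}.
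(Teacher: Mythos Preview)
Your proposal has a structural gap that makes it circular relative to the paper's overall argument. You begin by applying the symmetry \eqref{eq:W-sym} to rewrite $W_{M^n}(0;0,q;a^{-1},b)=W_{n^M}((a^{-1},b);q,0;0)$, and then propose to evaluate the latter via the uncolored reduction \eqref{eq:col-uncol-match}. But this is precisely the object that Proposition~\ref{prop: vtx model periodic schur} already computes as the periodic Schur sum. Your remaining task, ``read the same uncolored lattice as a $q$-Whittaker model,'' is then literally the identity \eqref{eq:IMS} of Theorem~\ref{thm: evaluation t=0}, which in the paper is \emph{deduced} from Propositions~\ref{prop: vtx model periodic schur} and~\ref{prop: vtx model q-whit} together with the symmetry. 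So you have reduced the proposition to the theorem it is meant to establish. Your ``main obstacle'' paragraph is honest about this: identifying the model \eqref{eq:uncolored}-with-windings as a $q$-Whittaker transfer matrix is not a technicality but the whole content of IMS; resumming column-wise geometric series in $q^{n-j}$ does not produce the vertex-level $q$-binomials that $q$-Whittaker weights require.

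The paper avoids this by \emph{not} applying \eqref{eq:W-sym}. It evaluates $W_{M^n}(0;0,q;y)$ using the partition function \eqref{eq:rectangle-PF} for $\lambda=M^n$ itself, which is a model with $n$ colors on a lattice of $M{+}1$ columns. In this specialization the first Macdonald parameter is $0$, so the winding fugacities vanish and there are \emph{no} windings; instead, the vertex weights \eqref{eq:weights} at $x=0$, $t\mapsto q$ carry intrinsic $q$-dependence, and after the color projection of Lemma~\ref{lem: q-whit color proj} become $\mathbf{1}_{a+b=c+d}\mathbf{1}_{b\le c}\,x^d\binom{a}{d}_q$. These are then matched to the known $q$-Whittaker vertex model of \cite{BW21}, with the top half giving $\prod_i a_i^{-n} P_{\mu}(a;q,0)$ (via the complementation of Proposition~\ref{prop: complement}) and the bottom half giving $\frac{(q;q)_n}{(q;q)_{n-\mu_1}}Q_\mu(b;q,0)$. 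The point is that the two vertex-model realizations of the same $W$ --- one with $M$ colors, trivial weights and $q$-windings, the other with $n$ colors, $q$-binomial weights and no windings --- are genuinely different combinatorial objects, and it is the symmetry \eqref{eq:W-sym} that equates them; you cannot use one realization to compute the other without that symmetry, or without an independent proof of \eqref{eq:IMS}.
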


We first give the specialization of the vertex model for $W_{M^n}$ which is relevant for this section. The first thing to note is that we work with a model consisting of $n$ colors, rather than $M$ as previously, in view of the fact that our partition is now $M^n$. Setting $x=0$, $y \mapsto x$ and $t \mapsto q$ in \eqref{eq:weights} yields, after collecting $q$ exponents,
\begin{equation*}
    L_{0,x}(A,B;C,D) \Big|_{t \mapsto q}
    =\mathbf{1}_{A+B=C+D} \mathbf{1}_{B\leq C}x^{|D|}q^{\phi(D,C-B)}
\end{equation*}
where $B\leq C$ means that $B_i\leq C_i$ for all $1 \leq i \leq n$. Since the winding fugacities are sent to zero in the left hand side of \eqref{eq:W-qW}, no arrows are permitted to perform windings in the partition function representation of $W_{M^n}$. We shall then use the following color blindness property to project away the colors.
\begin{lemma}
\label{lem: q-whit color proj}
Fix four integers $a,b,c,d \in \mathbb{Z}_{\geq 0}$ as well as two vectors $A,B \in \{0,1\}^n$ such that $|A|=a$, $|B|=b$. The following identity holds:
\begin{equation}
\label{eq: q-whit uncolored vtx wt}
    \sum_{C,D:|C|=c,|D|=d}L_{0,x}(A,B;C,D)\Big|_{t \mapsto q}
    =\mathbf{1}_{a+b=c+d} \mathbf{1}_{b\leq c} x^d {a\choose d}_q.
\end{equation}
\end{lemma}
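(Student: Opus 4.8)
The plan is to prove the color-projection identity \eqref{eq: q-whit uncolored vtx wt} by fixing the incoming data $A, B \in \{0,1\}^n$ (with $|A| = a$, $|B| = b$) and summing the specialized weight $L_{0,x}(A,B;C,D)\big|_{t\mapsto q} = \mathbf{1}_{A+B=C+D}\,\mathbf{1}_{B\le C}\, x^{|D|} q^{\phi(D,C-B)}$ over all $C, D$ with prescribed sizes $|C| = c$, $|D| = d$. The conservation indicator $\mathbf{1}_{A+B=C+D}$ forces $c + d = a + b$, giving the factor $\mathbf{1}_{a+b=c+d}$; and since $B \le C$ must hold for a nonzero term, and $C = A + B - D$, the constraint $B \le C$ together with $D \le A + B$ (forced by $C \ge 0$ componentwise, i.e. $D_i \le A_i + B_i$ but since entries are in $\{0,1\}$ and $A_i + B_i$ could be $2$... actually $C, D \in \{0,1\}^n$ so $D \le A+B$ and $C = A+B-D \le \mathbf{1}$) needs to be unpacked carefully. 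The key reduction is that the pair $(C,D)$ is determined by $D$ alone once $A, B$ are fixed, so the sum is really over admissible $D \in \{0,1\}^n$ with $|D| = d$.

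The main combinatorial step is to analyze which $D$ are admissible and to compute $\sum_D x^{|D|} q^{\phi(D, C-B)} = x^d \sum_D q^{\phi(D, A-D)}$, using $C - B = A - D$. The vector $A - D$ has entries in $\{-1,0,1\}$; I expect the admissibility constraints ($C \in \{0,1\}^n$, $B \le C$) to force $D$ to be supported where... let me think: $C_i = A_i + B_i - D_i \in \{0,1\}$ and $C_i \ge B_i$. If $B_i = 1$: need $C_i \ge 1$, i.e. $A_i + 1 - D_i \ge 1$, i.e. $D_i \le A_i$; and $C_i \le 1$ means $A_i + 1 - D_i \le 1$, i.e. $D_i \ge A_i$; so $D_i = A_i$. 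If $B_i = 0$: $C_i = A_i - D_i \in \{0,1\}$ forces $D_i \le A_i$, no lower constraint. So $D$ agrees with $A$ on $\mathrm{supp}(B)$ and satisfies $D \le A$ off $\mathrm{supp}(B)$; in particular $D \le A$ everywhere and $D \supseteq A \cap B$. Then $\phi(D, A - D) = \sum_{i < j} D_i (A_j - D_j) = \sum_{i<j} D_i(A_j - D_j)$, and since $D \le A$, writing $E = A - D$ (the "holes"), we get $\phi(D, E)$ where $D + E = A$ and $E$ is supported off $\mathrm{supp}(B)$. So really we're choosing $E \subseteq \mathrm{supp}(A) \setminus \mathrm{supp}(B)$ with $|E| = a - d$, and summing $q^{\phi(A - E, E)}$.

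The final identity to verify is therefore $\sum_{E} q^{\phi(A-E,E)} = \binom{a}{d}_q$ where $E$ ranges over subsets of a fixed $a$-element set $\mathrm{supp}(A)$ (the constraint $E \cap \mathrm{supp}(B) = \emptyset$ turns out to be automatically implied once we note $\mathrm{supp}(A\cap B) \subseteq \mathrm{supp}(D)$, i.e. $E$ avoids $\mathrm{supp}(B)$ — wait, this requires $\mathrm{supp}(A) \cap \mathrm{supp}(B)$ handling; I should double-check that the admissible $E$ are exactly the $(a-d)$-subsets of $\mathrm{supp}(A)$ avoiding $\mathrm{supp}(B)$, but if $a - d > |\mathrm{supp}(A)\setminus\mathrm{supp}(B)|$ both sides vanish, and the $q$-binomial still equals the sum over the full support since the extra terms are zero). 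Restricting attention to the $a$ coordinates in $\mathrm{supp}(A)$, the statistic $\phi(A-E, E) = \sum_{i<j}(A-E)_i E_j$ counts pairs $(i,j)$ with $i < j$, $i \notin E$, $j \in E$, $i, j \in \mathrm{supp}(A)$ — this is precisely the number of inversions of the binary string obtained by marking the elements of $E$ within $\mathrm{supp}(A)$, and the generating function of $(a-d, d)$ binary strings by inversions is the Gaussian binomial $\binom{a}{d}_q$. This is the standard combinatorial interpretation of the $q$-binomial coefficient (see e.g. Stanley's \emph{Enumerative Combinatorics}), so this step is routine.

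I expect the main obstacle to be bookkeeping the admissibility constraints correctly — in particular verifying that the lower bound $D \ge A \cap B$ and the upper bound $D \le A$ are exactly the right conditions, that the $B \le C$ constraint contributes $\mathbf{1}_{b \le c}$ (which should follow since $B \le C$ forces $|B| \le |C|$, and conversely if $b \le c$ an admissible configuration exists) and nothing more, and that the count of admissible $E$ matches the support of the $q$-binomial so that no spurious terms appear. Once the constraints are pinned down, the identification with the inversion statistic and hence with $\binom{a}{d}_q$ is immediate. I would organize the proof as: (1) reduce the sum to a sum over $D$, (2) solve the constraints to get $D_i = A_i$ on $\mathrm{supp}(B)$ and $D \le A$ elsewhere, extract $\mathbf{1}_{a+b=c+d}\mathbf{1}_{b\le c}$, (3) rewrite $\phi(D, C-B) = \phi(D, A-D)$ and reinterpret the sum over $E = A - D$ as a sum over binary words by inversions, yielding $\binom{a}{d}_q$.
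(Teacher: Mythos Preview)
Your overall approach matches the paper's: reduce the double sum to a sum over $D$ alone, rewrite $\phi(D,C-B)=\phi(D,A-D)$, and recognize the result as the inversion generating function $\binom{a}{d}_q$. The paper compresses all of this into a single displayed line.

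Where you diverge is in your careful treatment of the admissibility constraints, and here you have actually uncovered a genuine issue. Your case analysis is correct: with $C,D\in\{0,1\}^n$, conservation together with $B\le C$ forces $D_i=A_i$ whenever $B_i=1$, so the admissible $D$ satisfy $A\cap B\le D\le A$, not merely $D\le A$. Your parenthetical attempts to argue the lower bound away are wrong on both counts: the excluded terms are nonzero powers of $q$, and the two sides do not simultaneously vanish. Concretely, for $n=2$, $A=(1,1)$, $B=(1,0)$, $d=1$, $c=2$, the only admissible pair is $D=(1,0)$, $C=(1,1)$, giving left side $xq$ while the right side is $x(1+q)$.

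The resolution is that the lemma is only ever applied inside the partition function for $W_{M^n}(0;0,q;a^{-1},b)$, where the winding fugacity has been set to zero. Each colored arrow then traces a single up-right path, so no color can occupy both the bottom and left edges of a vertex; hence $A\cap B=\emptyset$ at every vertex. Under this tacit hypothesis the lower bound $D\ge A\cap B$ is vacuous, the sum ranges over all $D\le A$ with $|D|=d$ exactly as the paper writes, and your inversion-statistic identification with $\binom{a}{d}_q$ goes through cleanly. Add the assumption $A\cap B=\emptyset$ and drop the parenthetical hand-waving; with that fix your argument is correct and is essentially the paper's proof spelled out in full.
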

\begin{proof}
We compute
\begin{equation*}
    \sum_{C,D:|C|=c,|D|=d}L_{0,x}(A,B;C,D)\Big|_{t \mapsto q}
    =\mathbf{1}_{a+b=c+d} \mathbf{1}_{b\leq c}x^d\sum_{D\leq A,|D|=d}q^{\phi(D,A-D)}=\mathbf{1}_{a+b=c+d} \mathbf{1}_{b\leq c}x^d{a\choose d}_q.
\end{equation*}
\end{proof}

In the partition function representation of $W_{M^n}(0;0,q;a^{-1},b)$, all colors leave the lattice via the same top-outgoing edge. This means that we can project away the colors of the model, using Lemma \ref{lem: q-whit color proj} vertex by vertex, giving a partition function with uncolored weights defined by \eqref{eq: q-whit uncolored vtx wt}.

\begin{proof}[Proof of Proposition \ref{prop: vtx model q-whit}]
Our main tool will be matching with known vertex model expressions for the $q$-Whittaker polynomials, obtained in \cite{BW21}. We begin by writing the left hand side of \eqref{eq:W-qW} as a partition function in the vertex model with weights given by \eqref{eq: q-whit uncolored vtx wt}. We find that 

\begin{equation*}
\prod_{i=1}^M a_i^nW_{M^n}(0;0,q;a^{-1},b)
=
\prod_{i=1}^M a_i^n
\times
\begin{tikzpicture}[scale=1,baseline=(current bounding box.center),>=stealth]
\foreach\x in {0,...,6}{
\draw[lgray,line width=10pt] (\x,0) -- (\x,7);
}
\foreach\y in {1,...,6}{
\draw[lgray,line width=10pt] (-1,\y) -- (7,\y);
}
\draw[line width=6pt, ->] (0,-0.25) -- (0,0.5);
\draw[line width=6pt, ->] (6,6.5) -- (6,7.25);
%labels
\node[left] at (-1,6) {$a_M^{-1}$};
\node[left] at (-1,5) {$\vdots$};
\node[left] at (-1,4) {$a_1^{-1}$};
\node[left] at (-1,3) {$b_N$};
\node[left] at (-1,2) {$\vdots$};
\node[left] at (-1,1) {$b_1$};
\node[below] at (1,7.75) {$0$};
\node[below] at (2,7.75) {$0$};
\node[below] at (3,7.75) {$\cdots$};
\node[below] at (0,7.75) {$0$};
\node[below] at (5,7.75) {$0$};
\node[below] at (6,7.75) {$n$};
\node[below] at (0,-0.5) {$n$};
\node[below] at (1,-0.5) {$0$};
\node[below] at (2,-0.5) {$0$};
\node[below] at (3,-0.5) {$\cdots$};
\node[below] at (5,-0.5) {$0$};
\node[below] at (6,-0.5) {$0$};
\node[below] at (0,3.75) {\footnotesize $n{-}\mu_1$};
\node[below] at (1,3.75) {\footnotesize $\mu_1{-}\mu_2$};
\node[below] at (2,3.75) {\footnotesize $\mu_2{-}\mu_3$};
\node[below] at (3,3.75) {$\cdots$};
\end{tikzpicture}
\end{equation*}
where we use the symmetry in the combined alphabet $(a^{-1},b)$ to place $a^{-1}_i$ dependent rows towards the top and $b_i$ dependent rows towards the bottom, which is the opposite of the convention we adopted previously. The lattice consists of $M+1$ columns, labelled (from left to right) as $0$ through to $M$. We let $\mu$ be a partition such that $n-\mu_1$ counts the number of arrows vertically traversing the middle of the $0$th column, and $\mu_i-\mu_{i+1}$ counts the number of arrows vertically traversing the middle of the $i$th column, as shown in the picture.

Consider firstly the bottom half of this partition function. Reflecting that portion of the lattice in the vertical Cartesian axis and rotating by $180^\circ$, the weights match those of Equation (38) in \cite{BW21} with $s=0$, and the geometry of the lattice matches the right panel of Figure 5 of \cite{BW21}. Moreover, while the partition function in \cite{BW21} has a $0$th column with infinitely many arrows entering via the top-incoming edge, exactly $n$ will actually enter remaining columns of the lattice. Replacing the infinitely many arrows of \cite{BW21} with our $n$ arrows amounts to gaining a $\frac{(q;q)_{a}}{(q;q)_{a-d}}$ factor for each vertex in the $0$th column, which results in an extra overall $\frac{(q;q)_n}{(q;q)_{n-\mu_1}}$ factor. Thus, the bottom half of the lattice gives $\frac{(q;q)_{n}}{(q;q)_{n-\mu_1}}Q_{\mu}(b;q,0)$.

For the top half of the lattice, we note that after conjugating the weights of Equation (35) in \cite{BW21} at $s=0$ by $x^{j-\ell}\frac{(q;q)_\ell}{(q;q)_j}$ and rotating by $180^\circ$, one obtains a match with our weights. Similarly, this transformation maps the geometry of the left panel of Figure 5 of \cite{BW21} to that of the top half our lattice. Again, there is a slight difference due to the $0$th column being used in \cite{BW21} having infinitely many arrows, but here this discrepancy results in no extra overall factors. Thus, the top half of the lattice gives $\prod_{i=1}^{M} a_i^n P_{M^n-\mu}(a^{-1};q,0)$. After applying Proposition \ref{prop: complement}, we thus recover the right hand side of \eqref{eq:W-qW}.
\end{proof}

\begin{remark}
Proposition \ref{prop: vtx model q-whit} could also be proved using the branching rule and complementation given by Proposition \ref{prop: complement}. 
\end{remark}

\begin{proof}[Proof of Theorem \ref{thm: evaluation t=0}]
This follows immediately from Propositions \ref{prop: vtx model periodic schur} and \ref{prop: vtx model q-whit}, and the symmetry $W_{\lambda}(x;q,t;y)=W_{\lambda'}(y;t,q;x)$.
\end{proof}

\section{Periodic Hall--Littlewood process and quasi-periodic six vertex model}
\label{sec: 6vm}
In this section we study the periodic Hall--Littlewood process. We define a quasi-periodic stochastic six vertex model, and show that certain observables of the periodic Hall--Littlewood process match those of the quasi-periodic six vertex model in distribution. This generalizes a result in \cite{BBW16} matching observables of the ordinary Hall--Littlewood process with those of the six vertex model on a rectangular domain. We also show that in the $u\to 1$ limit, the quasi-periodic six vertex model converges to the stationary periodic six vertex model, and explain what parts of the connection to the periodic Hall--Littlewood process survive this procedure.

\subsection{Quasi-periodic six vertex model}
\begin{figure}
\label{fig:6v}
    \centering
    \begin{tabular}{l|cccccc}
    Configuration:&$\vcenter{\hbox{\includegraphics[scale=1]{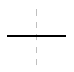}}}$ &$\vcenter{\hbox{\includegraphics[scale=1]{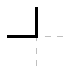}}}$&$\vcenter{\hbox{\includegraphics[scale=1]{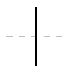}}}$&$\vcenter{\hbox{\includegraphics[scale=1]{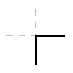}}}$&$\vcenter{\hbox{\includegraphics[scale=1]{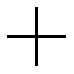}}}$& $\vcenter{\hbox{\includegraphics[scale=1]{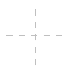}}}$\\
    Probability:&$\mathbf{p}_{i,j}$&$1-\mathbf{p}_{i,j}$&$t\mathbf{p}_{i,j}$&$1-t\mathbf{p}_{i,j}$ &1&1
    \end{tabular}
    \caption{Vertex weights for a vertex $(i,j)$. The black line represents an arrow and the dashed line represents no arrow.}
    \label{fig:vtx wts}
\end{figure}

Let $M,N\in\N$, let $t\in [0,1)$, and let $a=(a_1,\dotsc, a_M)$ and $b=(b_1,\dotsc,b_N)$ be parameters in $(0,1)$, which we call \emph{rapidities}. The \emph{stochastic six vertex model} on an $M\times N$ rectangle is a probability measure on configurations of arrows in an $M\times N$ lattice. Arrows travel right and upwards, and we specify the locations of the incoming arrows along the left and bottom. Each edge in the lattice can be occupied by at most one arrow. We let
\begin{equation*}
    \mathbf{p}_{i,j}=\frac{1-a_ib_j}{1-ta_ib_j},
\end{equation*}
and the probability of any configuration is then a product of vertex weights given in Figure \ref{fig:vtx wts}. We will frequently use graphical notation to denote this model, and in particular will use
\begin{equation*}
\begin{tikzpicture}[scale=0.8,baseline=(current bounding box.center),>=stealth]
\begin{scope}[rotate around={45:(0,0)}]
\draw[thick, dotted] (8.5,4.5)node[left]{\tiny $T_2(N)$} -- (11,2) node[right] {\tiny $S_2(N)$};
\draw[thick, dotted] (8,4) -- (10.5,1.5);
\draw (10.8,1.35)node{$\vdots$};
\draw ({10.8-3},{1.35+3})node{$\vdots$};
\draw[thick, dotted] (7.5,3.5)node[left]{\tiny $T_2(1)$} -- (10,1) node[right] {\tiny $S_2(1)$};
\draw[ultra thick,->] (10.5,2.5) -- (11,2);
\draw[ultra thick,->] (8.5,4.5) -- (8.8,4.2);
\draw[ultra thick,->] (8,4) -- (8.3,3.7);
\draw[ultra thick,->] (7.5,3.5) -- (7.8,3.2);
\draw[thick, dotted] (7.5,2.5)node[below]{\tiny$T_1(1)$} -- (9.5,4.5) node[above] {\tiny $S_1(1)$};
\draw[thick, dotted] (8,2) -- (10,4);
\draw[thick, dotted] (8.5,1.5) -- (10.5,3.5);
\draw (10.25,3.75)node[above]{$\dots$};
\draw (8,1.5)node[]{$\dots$};
\draw[thick, dotted] (9,1)node[below]{\tiny$T_1(M)$} -- (11,3) node[above] {\tiny $S_1(M)$};
\draw[ultra thick,->] (10,3) -- (10.5,3.5); \draw[ultra thick,->] (9,4) -- (9.5,4.5);
\draw[rotate around={-45:(7.75,3)},fill=white] (7.75,3)rectangle({7.75+3.5*sqrt(2)/2},{3+2.5*sqrt(2)/2})node[midway]{$z$};
\end{scope}
\end{tikzpicture}
\end{equation*}
to denote the probability for the six vertex model in an $M\times N$ domain to have outgoing arrows at locations specified by $S_1$ and $S_2$ along the top and right, given incoming arrows $T_1$ and $T_2$ on the bottom and left, respectively. Here we are implicitly labelling the columns and rows with rapidities $za_i$ and $b_j$, where $z$ is a parameter that varies in each copy of our underlying $M \times N$ domain, and will explicitly write this when necessary. We may sometimes also simply write $S_i$ and $T_i$ once as shorthand or omit them entirely if they are not important, but it will always be assumed that outer edges are fully specified.

We will also wish to combine these diagrams into more complicated pictures. The general rule is that internal edges are always summed over. For example,
\begin{equation*}
\begin{tikzpicture}[scale=0.8,baseline=(current bounding box.center),>=stealth]
\begin{scope}[xshift=3.6cm,rotate around={45:(0,0)}]
\draw[thick, dotted] (8.5,4.5) -- (11,2) ;
\draw[thick, dotted] (8,4) -- (10.5,1.5);
\draw[thick, dotted] (7.5,3.5) -- (10,1);
\draw[ultra thick,->] (10.5,2.5) -- (11,2);
\draw[thick, dotted] (7.5,2.5) -- (9.5,4.5) ;
\draw[thick, dotted] (8,2) -- (10,4);
\draw[thick, dotted] (8.5,1.5) -- (10.5,3.5);
\draw[thick, dotted] (9,1) -- (11,3);
\draw[rotate around={-45:(7.75,3)},fill=white] (7.75,3)rectangle({7.75+3.5*sqrt(2)/2},{3+2.5*sqrt(2)/2})node[midway]{$z_2$};
\end{scope}
\begin{scope}[rotate around={45:(0,0)}]
\draw[thick, dotted] (8.5,4.5) -- (11,2) ;
\draw[thick, dotted] (8,4) -- (10.5,1.5);
\draw[thick, dotted] (7.5,3.5) -- (10,1);
\draw[ultra thick,->] (8.5,4.5) -- (8.8,4.2);
\draw[ultra thick,->] (8,4) -- (8.3,3.7);
\draw[ultra thick,->] (7.5,3.5) -- (7.8,3.2);
\draw[thick, dotted] (7.5,2.5) -- (9.5,4.5) ;
\draw[thick, dotted] (8,2) -- (10,4);
\draw[thick, dotted] (8.5,1.5) -- (10.5,3.5);
\draw[thick, dotted] (9,1) -- (11,3);
\draw[ultra thick,->] (10,3) -- (10.5,3.5); \draw[ultra thick,->] (9,4) -- (9.5,4.5);
\draw[rotate around={-45:(7.75,3)},fill=white] (7.75,3)rectangle({7.75+3.5*sqrt(2)/2},{3+2.5*sqrt(2)/2})node[midway]{$z_1$};
\end{scope}
\end{tikzpicture}=\sum_{S}\begin{tikzpicture}[scale=0.8,baseline=(current bounding box.center),>=stealth]
\begin{scope}[xshift=4.2cm,rotate around={45:(0,0)}]
\draw[thick, dotted] (8.5,4.5) -- (11,2) ;
\draw[thick, dotted] (8,4)node[left]{$S$} -- (10.5,1.5);
\draw[thick, dotted] (7.5,3.5) -- (10,1);
\draw[ultra thick,->] (10.5,2.5) -- (11,2);
\draw[thick, dotted] (7.5,2.5) -- (9.5,4.5) ;
\draw[thick, dotted] (8,2) -- (10,4);
\draw[thick, dotted] (8.5,1.5) -- (10.5,3.5);
\draw[thick, dotted] (9,1) -- (11,3);
\draw[rotate around={-45:(7.75,3)},fill=white] (7.75,3)rectangle({7.75+3.5*sqrt(2)/2},{3+2.5*sqrt(2)/2})node[midway]{$z_2$};
\end{scope}
\begin{scope}[rotate around={45:(0,0)}]
\draw[thick, dotted] (8.5,4.5) -- (11,2) ;
\draw[thick, dotted] (8,4) -- (10.5,1.5);
\draw[thick, dotted] (7.5,3.5) -- (10,1);
\draw[ultra thick,->] (8.5,4.5) -- (8.8,4.2);
\draw[ultra thick,->] (8,4) -- (8.3,3.7);
\draw[ultra thick,->] (7.5,3.5) -- (7.8,3.2);
\draw[thick, dotted] (7.5,2.5) -- (9.5,4.5) ;
\draw[thick, dotted] (8,2) -- (10,4);
\draw[thick, dotted] (8.5,1.5) -- (10.5,3.5);
\draw[thick, dotted] (9,1) -- (11,3);
\draw[ultra thick,->] (10,3) -- (10.5,3.5); \draw[ultra thick,->] (9,4) -- (9.5,4.5);
\draw[rotate around={-45:(7.75,3)},fill=white] (7.75,3)rectangle({7.75+3.5*sqrt(2)/2},{3+2.5*sqrt(2)/2})node[midway]{$z_1$};
\end{scope}
\end{tikzpicture}
\end{equation*}
where we sum over all possible ways for arrows to exit the first six vertex model configuration and then enter the second.

Let $u\in [0,1)$ and $L\in\N$, and continue to use the previously defined parameters. We define the \emph{quasi-periodic stochastic six vertex model} of length $L$ to be the probability distribution on $L$ six vertex models on $M\times N$ domains with the $l$th one having rapidities $u^{l-1}a$ and $b$, and such that the arrows exiting on the right of the $l$th one enter the left of the $(l-1)$th one, and the arrows exiting on the top of the $l$th one enter the bottom of the $(l-1)$th one. This will be represented with the picture
\begin{equation*}
\begin{tikzpicture}[scale=0.8,baseline=(current bounding box.center),>=stealth]
\begin{scope}[xshift=8.1cm,rotate around={45:(0,0)}]
\draw[thick, dotted] (8.5,4.5) -- (11,2) ;
\draw[thick, dotted] (8,4)node[left]{$\dots$} -- (10.5,1.5)node[right]{$S_2$};
\draw[thick, dotted] (7.5,3.5) -- (10,1);
\draw[ultra thick,->] (10.5,2.5) -- (11,2);
\draw[thick, dotted] (7.5,2.5) -- (9.5,4.5) ;
\draw[thick, dotted] (8,2) -- (10,4)node[above right]{$S_1$};
\draw[thick, dotted] (8.5,1.5) -- (10.5,3.5);
\draw[thick, dotted] (9,1) -- (11,3);
\draw[ultra thick,->] (10,3) -- (10.5,3.5); \draw[ultra thick,->] (9,4) -- (9.5,4.5);
\draw[rotate around={-45:(7.75,3)},fill=white] (7.75,3)rectangle({7.75+3.5*sqrt(2)/2},{3+2.5*sqrt(2)/2})node[midway]{$1$};
\end{scope}
\begin{scope}[xshift=3.6cm,rotate around={45:(0,0)}]
\draw[thick, dotted] (8.5,4.5) -- (11,2) ;
\draw[thick, dotted] (8,4) -- (10.5,1.5);
\draw[thick, dotted] (7.5,3.5) -- (10,1);
\draw[thick, dotted] (7.5,2.5) -- (9.5,4.5) ;
\draw[thick, dotted] (8,2) -- (10,4);
\draw[thick, dotted] (8.5,1.5) -- (10.5,3.5);
\draw[thick, dotted] (9,1) -- (11,3);
\draw[rotate around={-45:(7.75,3)},fill=white] (7.75,3)rectangle({7.75+3.5*sqrt(2)/2},{3+2.5*sqrt(2)/2})node[midway]{$u^{L-2}$};
\end{scope}
\begin{scope}[rotate around={45:(0,0)}]
\draw[thick, dotted] (8.5,4.5) -- (11,2) ;
\draw[thick, dotted] (8,4) -- (10.5,1.5);
\draw[thick, dotted] (7.5,3.5) -- (10,1);
\draw[ultra thick,->] (8.5,4.5) -- (8.8,4.2);
\draw[ultra thick,->] (8,4)node[left]{$T_2$} -- (8.3,3.7);
\draw[ultra thick,->] (7.5,3.5) -- (7.8,3.2);
\draw[thick, dotted] (7.5,2.5) -- (9.5,4.5) ;
\draw[thick, dotted] (8,2)node[below right]{$T_1$} -- (10,4);
\draw[thick, dotted] (8.5,1.5) -- (10.5,3.5);
\draw[thick, dotted] (9,1) -- (11,3);
\draw[rotate around={-45:(7.75,3)},fill=white] (7.75,3)rectangle({7.75+3.5*sqrt(2)/2},{3+2.5*sqrt(2)/2})node[midway]{$u^{L-1}$};
\end{scope}
\draw[ultra thick, gray](6.8,7.3)--(14.2,7.3);
\draw[ultra thick, gray]({5.3-sqrt(2)*3/2},{7.55+sqrt(2)*3/2})--({12.6-sqrt(2)*3/2},{7.55+sqrt(2)*3/2});
\end{tikzpicture}
\end{equation*}
where the index $l$, mentioned above, increases from right to left in this picture. We stress that the grey line indicates that the edges are identified in the periodic manner previously described, and are thus internal and implicitly summed over. Let us remark that while we have chosen to draw the diagrams so that the horizontal edges are connected, one could equally draw the pictures so that the vertical edges are connected; hence the rows and columns are treated symmetrically even though they are not drawn so.

We now wish to define the $L\to\infty$ limit of the model to obtain the quasi-periodic six vertex model. One must show that such a limit can indeed by taken, but in fact our matching with the periodic Hall--Littlewood process will show this as a biproduct. Thus, we will for now assume this to be true.

We then define the \emph{quasi-periodic stochastic six vertex model} as the limit of the quasi-periodic six vertex model of length $L$ when $T_1=(0,\dotsc, 0)$ has no arrows, $T_2=(1,\dotsc, 1)$ has arrows on each edge, and $L\to\infty$. By convention, we will use $S_1$ and $S_2$ to denote the locations of the outgoing arrows, and we will let $W$ denote the number of times arrows enter from the bottom of a six vertex domain, which we will refer to as the number of times arrows \emph{wind}. For a sequence of increasing partitions $\vec\lambda=(\lambda^{(0)}\subseteq\dotsm\subseteq\lambda^{(N)})$, we let $[\vec\lambda]\in\{0,1\}^N$ be the vector with a $1$ in position $i$ if $l(\lambda^{(i)})>l(\lambda^{(i-1)})$, and $0$ otherwise, and we let $[\vec\lambda]^c$ be defined similarly but with a $1$ if the length does not increase; i.e. $[\vec\lambda]^c = 1^N-[\vec\lambda]$. We are now ready to state the main result of this section, a matching between the periodic Hall--Littlewood process and this quasi-periodic six vertex model.

\begin{theorem}
\label{thm: HL 6vm}
Let $(\vec\lambda,\vec\mu)\sim \mathbb{PHL}^{a,b}_{t,u}$, and let $S_1$, $S_2$, and $W$ denote the outgoing arrows at the top and right, and number of times arrow wind in the quasi-periodic six vertex model, respectively (with the same parameters $u,t$ and rapidities $a,b$). Then for all $n\in \N$, $s_1\in\{0,1\}^M$, and $s_2\in\{0,1\}^N$, we have
\begin{equation*}
    \mathbb{PHL}_{t,u}^{a,b}(l(\lambda^{(0)})=n,[\vec\lambda]=s_1,[\vec\mu]^c=s_2)=\PP(W+\chi=n,S_1=s_1,S_2=s_2),
\end{equation*}
where $\chi$ is an independent $u$-geometric random variable.
\end{theorem}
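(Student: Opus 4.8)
The plan is to interpret both sides as partition functions in a common vertex-model language, and then to connect them by Yang--Baxter moves, following the template of \cite{BBW16} for the $u=0$ case but carrying the winding fugacity $u$ through the argument. First I would introduce the integrable vertex model of $q$-deformed bosons (here $t$-bosons, since $q=0$) whose row-to-row transfer matrices produce skew Hall--Littlewood polynomials $P_{\lambda/\mu}(x;0,t)$ and their duals $Q_{\lambda/\mu}(x;0,t)$ when acting on the appropriate boundary states; this is Lemma \ref{lem: boson HL} referred to in Section \ref{sec: t=0}. Stacking $M$ such rows with rapidities $a_i$ (producing the $\vec\lambda$ chain) on top of $N$ rows with rapidities $b_j$ (producing the $\vec\mu$ chain), with the top and bottom boundaries identified and carrying a winding weight $u^{|\mu^{(0)}|}$, realizes $\mathbb{PHL}^{a,b}_{t,u}(\vec\lambda,\vec\mu)$ as a cylindric partition function up to the normalization $\Phi(a,b;0,t,u)$. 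The observables $l(\lambda^{(0)})$, $[\vec\lambda]$, $[\vec\mu]^c$ become, respectively, the number of bosonic arrows crossing the seam, and the occupation pattern of the single-arrow (horizontal) edges along the two halves of the cylinder.

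\textbf{Key steps.} The core of the argument is a graphical intertwining. I would take the bosonic partition function above and repeatedly apply the Yang--Baxter equation to commute a column of ``stochastic'' six-vertex $R$-matrices through the lattice, exactly as in \cite{BBW16}. Concretely: (1) verify that the stochastic six vertex weights of Figure \ref{fig:vtx wts}, with $\mathbf p_{i,j}=(1-a_ib_j)/(1-ta_ib_j)$, intertwine the $t$-boson weights --- i.e. there is an $RLL=LLR$ relation linking the boson model to the six-vertex model; (2) use this to transport the six-vertex rapidity line through all $M+N$ boson rows, at which point the boson lattice is ``resolved'' into a stochastic six vertex lattice on the cylinder; (3) track how the boundary data transforms: the $u^{|\mu^{(0)}|}$ seam weight becomes the winding fugacity of the six vertex model, the geometric decay $u^{l-1}a_i$ in the $l$th copy emerges from iterating the seam relation, and the partition-length observables turn into $S_1$, $S_2$, $W$; (4) match normalizations, which forces the appearance of the independent $u$-geometric $\chi$ --- the factor $1/(u;u)_\infty$ in $\Phi(a,b;0,t,u)$ is precisely $\sum_n u^n(u;u)_\infty^{-1}\cdot(u;u)_\infty$... more precisely, the summation over the number of ``free'' windings not pinned by $l(\mu^{(0)})$ produces a $u$-geometric convolution, which is where $\chi$ enters. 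Finally, taking $L\to\infty$ is legitimate because each observable $S_1,S_2,W,l(\lambda^{(0)})$ depends on only finitely much data and the rapidity decay $u^{l-1}$ makes the contributions of far copies summable; this simultaneously establishes existence of the $L\to\infty$ limit, as promised in the text.

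\textbf{Main obstacle.} I expect the principal difficulty to be \emph{bookkeeping the winding structure under the Yang--Baxter moves on the cylinder}: unlike the rectangular case of \cite{BBW16}, an arrow here may re-enter the domain many times, and one must check that the fugacity $u$ attached to the seam in the boson picture maps \emph{exactly} to the column-rapidity rescaling $a_i\mapsto u^{l-1}a_i$ in the $l$th six-vertex copy, with no spurious cross-terms between distinct windings. A secondary subtlety is the asymmetry in the statement --- $[\vec\lambda]$ versus $[\vec\mu]^c$ --- which reflects a complementation (no-arrow $\leftrightarrow$ arrow) on the $b$-rows of the six vertex model, analogous to the red-vertex complementation used in the proof of Proposition \ref{prop: vtx model periodic schur}; one must confirm this complementation is compatible with the periodic identification of edges. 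Once these combinatorial identifications are pinned down, the equality of the two probability distributions is a formal consequence of the identity of (normalized) partition functions, evaluated against the indicator of the prescribed boundary data.
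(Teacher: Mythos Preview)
Your proposal is correct and follows essentially the same approach as the paper: represent the periodic Hall--Littlewood probability as a cylindric deformed-boson partition function via Lemma \ref{lem: boson HL}, apply the Yang--Baxter relation of Proposition \ref{prop: YB boson} to peel off an $M\times N$ six-vertex block, then iterate --- the key point you allude to as ``iterating the seam relation'' is exactly Lemma \ref{lem: u power shift}, which converts $u^{|\mu|}$ into $u^{|\lambda|}$ at the cost of rescaling $a_i\mapsto ua_i$, so that after $L$ iterations the boson lattice carries rapidities $u^L a_i\to 0$ and trivializes, leaving behind the quasi-periodic six-vertex model and a residual $u$-geometric $\chi$. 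One small correction: the $[\vec\lambda]$ versus $[\vec\mu]^c$ asymmetry is not produced by a separate complementation step as in Proposition \ref{prop: vtx model periodic schur}, but is already built into the two families of boson weights \eqref{eq:black-vertices} and \eqref{eq:red-vertices} (the $Q$-rows have an arrow entering from the left, so an arrow exiting on the right corresponds to the length \emph{not} increasing).
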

\begin{remark}
As already noted, we have not proved that the limiting procedure to obtain the quasi-periodic six vertex model actually converges. One should interpret the theorem as saying that the observables of the quasi-periodic six vertex model of length $L$ converges in distribution to those of the periodic Hall--Littlewood process as $L\to\infty$. Convergence of these observables is actually enough to define the entire infinite quasi-periodic model, as one can then define a sequence of growing finite portions of the model in a consistent manner.
\end{remark}

\begin{remark}
\label{rmk: down right path}
One could state and prove a more general statement, with a periodic Hall--Littlewood process with multiple ascending/descending specializations, and the quasi-periodic six vertex model where the output is a general down right domain, in the same spirit as Theorem 5.6 of \cite{BBW16}. The proof is essentially the same, and so for the sake of simplifying the notation and diagrams we have only stated the simplest case. 

When $u=0$, the quasi-periodic six vertex model reduces to the usual six vertex model in a rectangle, and the periodic Hall--Littlewood measure becomes the usual Hall--Littlewood measure, and Theorem \ref{thm: HL 6vm} reduces to Theorem 4.3 of \cite{BBW16}.
\end{remark}

\subsection{Deformed bosons}
We now introduce an additional vertex model which is needed in the proof of Theorem \ref{thm: HL 6vm}. It is a model for arrows on a lattice, where the horizontal edges again can have at most one arrow, but where the vertical edges can have any number of arrows. A local parameter (called a \emph{rapidity}) is associated to each row of the lattice, while all vertices depend on a global parameter $t$. The vertex weights are given by

\begin{align}
\label{eq:black-vertices}
\begin{array}{cccc}
\begin{tikzpicture}[scale=0.8,>=stealth]
\draw[lgray,ultra thick] (-1,0) -- (1,0);
\draw[lgray,line width=10pt] (0,-1) -- (0,1);
\node[below] at (0,-1) {$m$};
\draw[ultra thick,->,rounded corners] (-0.075,-1) -- (-0.075,1);
\draw[ultra thick,->,rounded corners] (0.075,-1) -- (0.075,1);
\node[above] at (0,1) {$m$};
\end{tikzpicture}
\quad\quad\quad
&
\begin{tikzpicture}[scale=0.8,>=stealth]
\draw[lgray,ultra thick] (-1,0) -- (1,0);
\draw[lgray,line width=10pt] (0,-1) -- (0,1);
\node[below] at (0,-1) {$m$};
\draw[ultra thick,->,rounded corners] (-0.075,-1) -- (-0.075,1);
\draw[ultra thick,->,rounded corners] (0.075,-1) -- (0.075,0) -- (1,0);
\node[above] at (0,1) {$m-1$};
\end{tikzpicture}
\quad\quad\quad
&
\begin{tikzpicture}[scale=0.8,>=stealth]
\draw[lgray,ultra thick] (-1,0) -- (1,0);
\draw[lgray,line width=10pt] (0,-1) -- (0,1);
\node[below] at (0,-1) {$m$};
\draw[ultra thick,->,rounded corners] (-1,0) -- (-0.15,0) -- (-0.15,1);
\draw[ultra thick,->,rounded corners] (0,-1) -- (0,1);
\draw[ultra thick,->,rounded corners] (0.15,-1) -- (0.15,1);
\node[above] at (0,1) {$m+1$};
\end{tikzpicture}
\quad\quad\quad
&
\begin{tikzpicture}[scale=0.8,>=stealth]
\draw[lgray,ultra thick] (-1,0) -- (1,0);
\draw[lgray,line width=10pt] (0,-1) -- (0,1);
\node[below] at (0,-1) {$m$};
\draw[ultra thick,->,rounded corners] (-1,0) -- (-0.15,0) -- (-0.15,1);
\draw[ultra thick,->,rounded corners] (0,-1) -- (0,1);
\draw[ultra thick,->,rounded corners] (0.15,-1) -- (0.15,0) -- (1,0);
\node[above] at (0,1) {$m$};
\end{tikzpicture}
\\
1
\quad\quad\quad
&
a
\quad\quad\quad
&
(1-t^{m+1})
\quad\quad\quad
&
a
\end{array}\end{align}
where $0\leq a<1$ is the row rapidity, and $m$ is the number of arrows entering from below.   

We will also need another version of this model with an alternative normalization. It is defined in the same way, but with alternative vertex weights

\begin{align}
\label{eq:red-vertices}
\begin{array}{cccc}
\begin{tikzpicture}[scale=0.8,>=stealth]
\draw[lred, ultra thick] (-1,0) -- (1,0);
\draw[lred,line width=10pt] (0,-1) -- (0,1);
\node[below] at (0,-1) {$m$};
\draw[ultra thick,->,rounded corners] (-0.075,-1) -- (-0.075,1);
\draw[ultra thick,->,rounded corners] (0.075,-1) -- (0.075,1);
\node[above] at (0,1) {$m$};
\end{tikzpicture}
\quad\quad\quad
&
\begin{tikzpicture}[scale=0.8,>=stealth]
\draw[lred,ultra thick] (-1,0) -- (1,0);
\draw[lred,line width=10pt] (0,-1) -- (0,1);
\node[below] at (0,-1) {$m$};
\draw[ultra thick,->,rounded corners] (-0.075,-1) -- (-0.075,1);
\draw[ultra thick,->,rounded corners] (0.075,-1) -- (0.075,0) -- (1,0);
\node[above] at (0,1) {$m-1$};
\end{tikzpicture}
\quad\quad\quad
&
\begin{tikzpicture}[scale=0.8,>=stealth]
\draw[lred,ultra thick] (-1,0) -- (1,0);
\draw[lred,line width=10pt] (0,-1) -- (0,1);
\node[below] at (0,-1) {$m$};
\draw[ultra thick,->,rounded corners] (-1,0) -- (-0.15,0) -- (-0.15,1);
\draw[ultra thick,->,rounded corners] (0,-1) -- (0,1);
\draw[ultra thick,->,rounded corners] (0.15,-1) -- (0.15,1);
\node[above] at (0,1) {$m+1$};
\end{tikzpicture}
\quad\quad\quad
&
\begin{tikzpicture}[scale=0.8,>=stealth]
\draw[lred,ultra thick] (-1,0) -- (1,0);
\draw[lred,line width=10pt] (0,-1) -- (0,1);
\node[below] at (0,-1) {$m$};
\draw[ultra thick,->,rounded corners] (-1,0) -- (-0.15,0) -- (-0.15,1);
\draw[ultra thick,->,rounded corners] (0,-1) -- (0,1);
\draw[ultra thick,->,rounded corners] (0.15,-1) -- (0.15,0) -- (1,0);
\node[above] at (0,1) {$m$};
\end{tikzpicture}
\\
b
\quad\quad\quad
&
1
\quad\quad\quad
&
b (1-t^{m+1})
\quad\quad\quad
&
1
\end{array}
\end{align}
where again, $0\leq b<1$ is the row rapidity and $m$ is the number of arrows entering from below.

In both cases, we will use a graphical notation to write partition functions. For a single row, we will write $w_a(\cdot )$ around a picture to represent the sum over all internal edges of the products of the vertex weights, with external edges fixed and $a$ denoting the row rapidity. If there is more than one row, we will instead simply give the picture with row rapidities identified.

We shall also wish to consider infinite rows, formally defined as a limit of progressively longer finite rows. For finitely supported sequences of non-negative integers $m_i$ and $n_i$, we will let
\begin{multline}
\label{eq:L-limit}
    w_a
\left(
\begin{tikzpicture}[baseline=(current bounding box.center),>=stealth,scale=0.8]
\draw[lgray,ultra thick] (0,0) -- (7,0);
\foreach\x in {1,...,6}{
\draw[lgray,line width=10pt] (7-\x,-1) -- (7-\x,1);
}
\node[left] at (0,0) {$0$};
\node[right] at (7,0) {$j$};
\foreach\x in {1,2,3}{
\node[text centered,below] at (7-\x,-1) {\tiny $m_{\x}$};
\node[text centered,above] at (7-\x,1) {\tiny $n_{\x}$};
}
\foreach\x in {4,5}{
\node[text centered,below] at (7-\x,-1) {\tiny $\cdots$};
\node[text centered,above] at (7-\x,1) {\tiny $\cdots$};
}
\end{tikzpicture}\right)
\\=\lim_{N\to\infty}w_a
\left(
\begin{tikzpicture}[baseline=(current bounding box.center),>=stealth,scale=0.8]
\draw[lgray,ultra thick] (0,0) -- (7,0);
\foreach\x in {1,...,6}{
\draw[lgray,line width=10pt] (7-\x,-1) -- (7-\x,1);
}
\node[left] at (0,0) {$0$};
\node[right] at (7,0) {$j$};
\foreach\x in {1,2,3}{
\node[text centered,below] at (7-\x,-1) {\tiny $m_{\x}$};
\node[text centered,above] at (7-\x,1) {\tiny $n_{\x}$};
}
\foreach\x in {4,5}{
\node[text centered,below] at (7-\x,-1) {\tiny $\cdots$};
\node[text centered,above] at (7-\x,1) {\tiny $\cdots$};
}
\node[text centered,below] at (1,-1) {\tiny $m_{N}$};
\node[text centered,above] at (1,1) {\tiny $n_{N}$};
\end{tikzpicture}\right),
\end{multline}
and similarly for the second set of weights \eqref{eq:red-vertices}, except with an arrow entering from the left. These limits are well-defined, because the $m_i$ and $n_i$ are finitely supported, so eventually the weights will all be $1$ sufficiently far to the left in the infinite product of vertices. Note that if an arrow entered from the left in \eqref{eq:L-limit} (or no arrow entered from the left in the case of rows constructed from the weights \eqref{eq:red-vertices}), the weights would eventually all be $a$, and not $1$, and the limit would simply be $0$ since $a<1$.

The reason that this model is useful to prove Theorem \ref{thm: HL 6vm} is that it shares the same $R$-matrix as the six vertex model, but its partition functions are given by Hall--Littlewood polynomials. These two statements are given in the following proposition and lemma. We will use a cross rotated by $45^\circ$ to denote a \emph{Yang--Baxter vertex}; this is a vertex with the same weights as those of the six--vertex model (cf. Figure \ref{fig:6v}) with corresponding row and column parameter.

\begin{proposition}[{\hspace{1sp}\cite[Proposition 4.8]{BBCW18}}]
\label{prop: YB boson}
For any finitely supported sequences $n_i$ and $m_i$, and any $j_1,j_2 \in \{0,1\}$, we have
\begin{equation*}
\label{graph-exchange}
\left(
\frac{1-a b}{1-t a b}
\right)
\sum_{p_i}
\begin{tikzpicture}[baseline=(current bounding box.center),>=stealth,scale=0.7]
\draw[lgray,ultra thick] (-1,1) node[left,black] {$a$}
-- (4,1) node[right,black] {$j_2$};
\draw[lred,ultra thick] (-1,0) node[left,black] {$b$}
-- (4,0) node[right,black] {$j_1$};
\foreach\x in {0,...,3}{
\draw[lgray,line width=10pt] (3-\x,0.5) -- (3-\x,2);
\draw[lred,line width=10pt] (3-\x,-1) -- (3-\x,0.5);
}
\node[below] at (3,-1) {$m_1$};
\node at (3,0.5) {$p_1$};
\node[above] at (3,2) {$n_1$};
\node[below] at (2,-1) {$m_2$};
\node at (2,0.5) {$p_2$};
\node[above] at (2,2) {$n_2$};
\node[text centered] at (0,0.5) {$\cdots$};
\node[text centered] at (1,0.5) {$\cdots$};
\draw[ultra thick,->] (-1,0) -- (0,0);
\end{tikzpicture}
=
\sum_{p_i,k_1,k_2}
\begin{tikzpicture}[baseline=(current bounding box.center),>=stealth,scale=0.7]
\foreach\x in {0,...,3}{
\draw[lgray,line width=10pt] (3-\x,-1) -- (3-\x,0.5);
\draw[lred,line width=10pt] (3-\x,0.5) -- (3-\x,2);
}
\draw[lred,ultra thick] (-1,1) node[left,black] {$b$}
-- (4,1);
\draw[lgray,ultra thick] (-1,0) node[left,black] {$a$}
-- (4,0);
\draw[dotted,thick] (4,1) node[above] {$k_1$} -- (5,0) node[right] {$j_1$};
\draw[dotted,thick] (4,0) node[below] {$k_2$} -- (5,1) node[right] {$j_2$};
\node[below] at (3,-1) {$m_1$};
\node at (3,0.5) {$p_1$};
\node[above] at (3,2) {$n_1$};
\node[below] at (2,-1) {$m_2$};
\node at (2,0.5) {$p_2$};
\node[above] at (2,2) {$n_2$};
\node[text centered] at (0,0.5) {$\cdots$};
\node[text centered] at (1,0.5) {$\cdots$};
\draw[ultra thick,->] (-1,1) -- (0,1);
\end{tikzpicture},
\end{equation*}
where on the left, $a$ and $b$ indicate the row rapidities, and the left boundary conditions are given by the arrows as indicated.
\end{proposition}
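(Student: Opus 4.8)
This identity (which is \cite[Proposition 4.8]{BBCW18}) is an instance of the $RLL$, or Yang--Baxter, relation. Reading each horizontal line of the deformed boson model as an $L$-operator acting on the infinite-dimensional ``bosonic'' vertical space, and reading the $45^{\circ}$ cross as the stochastic six-vertex $R$-matrix with spectral parameters $a$ and $b$, the claim is that this $R$-matrix intertwines the product $L^{\mathrm{black}}_a L^{\mathrm{red}}_b$ (black row above red row, as on the left) with $L^{\mathrm{red}}_b L^{\mathrm{black}}_a$ (red above black, as on the right), once the left boundary data -- an arrow on the $b$-row, none on the $a$-row -- is fixed; the scalar $\frac{1-ab}{1-tab}$ records the normalization distinguishing the stochastic weights of Figure \ref{fig:6v} from the bare $R$-matrix. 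I would prove it in three steps: (i) establish the local, one-column form of the relation; (ii) propagate the cross through the whole finite row by induction on the number of columns (the ``train argument''); (iii) pass to the semi-infinite limit.

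For step (i) one fixes a single column, sums over the two horizontal edges lying between the two rows and over the vertical edge between them, with all outer states held fixed, and checks that the one-column partition function with the cross on the right equals $\frac{1-ab}{1-tab}$ times the one-column partition function with the cross on the left, for every choice of outer states. Since every horizontal edge carries a state in $\{0,1\}$ there are only finitely many boundary cases, but the vertical occupation $m$ at the bottom of the column is unbounded; treating $t^m$ as a formal parameter, each case collapses to a short rational identity in $a$, $b$, $t$, and $t^m$, verified directly from the weights \eqref{eq:black-vertices}, \eqref{eq:red-vertices} and Figure \ref{fig:6v}. Alternatively, one may recognize \eqref{eq:black-vertices} and \eqref{eq:red-vertices} as two normalizations of the Hall--Littlewood ($t$-boson) degeneration of the higher-spin stochastic vertex weights, for which the $RLL$ relation with the six-vertex $R$-matrix is classical, and deduce step (i) by specialization while tracking the scalar through the limit.

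Given the one-column relation, step (ii) is routine: starting from the right-hand configuration one repeatedly applies it to drag the cross leftward past one column at a time, each application merely swapping, locally, which row sits on top and leaving the rest of the picture untouched; after all columns the cross reaches the far left, where the prescribed left boundary evaluates the remaining six-vertex vertex to a scalar, accounting for the overall factor $\frac{1-ab}{1-tab}$ on the cross-free left-hand side. For step (iii), the hypothesis that the $m_i$ and $n_i$ are finitely supported guarantees that far enough to the left all vertical edges are empty and, given the fixed left boundary, every vertex has weight $1$; hence the finite-row identities stabilize and pass to the semi-infinite limit, in the sense of \eqref{eq:L-limit}.

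The main obstacle is step (i): since the vertical occupation $m$ is unbounded one cannot merely enumerate finitely many configurations but must organize the verification as a family of identities in the single parameter $t^m$, while simultaneously keeping careful track of the two distinct normalizations \eqref{eq:black-vertices} and \eqref{eq:red-vertices} so that the scalar $\frac{1-ab}{1-tab}$ emerges exactly once rather than as a power per column. Everything downstream -- the train argument and the semi-infinite limit -- is then straightforward.
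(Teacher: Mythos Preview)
The paper does not prove this proposition at all: it is quoted verbatim as \cite[Proposition 4.8]{BBCW18} and used as a black box. Your outline is the standard $RLL$/train argument by which such intertwining relations are established, and it is correct; this is essentially how the result is obtained in the cited reference, so there is nothing to compare.

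One small comment on your closing worry: in the usual formulation the local one-column Yang--Baxter relation is an \emph{exact} equality with no scalar, so there is no danger of accumulating a factor per column. The prefactor $\frac{1-ab}{1-tab}$ appears exactly once because, after the cross has been dragged all the way to the left, the fixed left boundary (no arrow on the $a$-row, an arrow on the $b$-row) forces a unique configuration at that terminal Yang--Baxter vertex, whose stochastic weight from Figure~\ref{fig:6v} is precisely $\mathbf{p}=\frac{1-ab}{1-tab}$. So the bookkeeping you flag as the main obstacle is in fact benign once the local relation is written in its scalar-free form.
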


\begin{lemma}[{\hspace{1sp}\cite[Lemma 4.11]{BBCW18}}]
\label{lem: boson HL}
Fix two partitions $\lambda$ and $\mu$ such that $\lambda=1^{m_1(\lambda)} 2^{m_2(\lambda)} \ldots$ and $\mu=1^{m_1(\mu)} 2^{m_2(\mu)} \ldots$. We then have
\begingroup
\allowdisplaybreaks
\begin{align*}
    w_a
\left(
\begin{tikzpicture}[baseline=(current bounding box.center),>=stealth,scale=0.8]
\draw[lgray,ultra thick] (0,0) -- (7,0);
\foreach\x in {1,...,6}{
\draw[lgray,line width=10pt] (7-\x,-1) -- (7-\x,1);
}
\node[left] at (0,0) {$0$};
\node[right] at (7,0) {$0$};
\foreach\x in {1,2,3}{
\node[text centered,below] at (7-\x,-1) {\tiny $m_{\x}(\lambda)$};
\node[text centered,above] at (7-\x,1) {\tiny $m_{\x}(\mu)$};
}
\foreach\x in {4,5}{
\node[text centered,below] at (7-\x,-1) {\tiny $\cdots$};
\node[text centered,above] at (7-\x,1) {\tiny $\cdots$};
}
\end{tikzpicture}
\right)&=\mathbf{1}_{l(\lambda)=l(\mu)}P_{\lambda/\mu}(a;0,t),
    \\w_a
\left(
\begin{tikzpicture}[baseline=(current bounding box.center),>=stealth,scale=0.8]
\draw[lgray,ultra thick] (0,0) -- (7,0);
\foreach\x in {1,...,6}{
\draw[lgray,line width=10pt] (7-\x,-1) -- (7-\x,1);
}
\node[left] at (0,0) {$0$};
\draw[ultra thick,->] (6,0) -- (7,0);
\node[right] at (7,0) {$1$};
\foreach\x in {1,2,3}{
\node[text centered,below] at (7-\x,-1) {\tiny $m_{\x}(\lambda)$};
\node[text centered,above] at (7-\x,1) {\tiny $m_{\x}(\mu)$};
}
\foreach\x in {4,5}{
\node[text centered,below] at (7-\x,-1) {\tiny $\cdots$};
\node[text centered,above] at (7-\x,1) {\tiny $\cdots$};
}
\end{tikzpicture}\right)&=\mathbf{1}_{l(\lambda)=l(\mu)+1}P_{\lambda/\mu}(a;0,t),
    \\w_a
\left(
\begin{tikzpicture}[baseline=(current bounding box.center),>=stealth,scale=0.8]
\draw[lred,ultra thick] (0,0) -- (7,0);
\foreach\x in {1,...,6}{
\draw[lred,line width=10pt] (7-\x,-1) -- (7-\x,1);
}
\node[left] at (0,0) {$1$};
\draw[ultra thick,->] (0,0) -- (1,0);
\node[right] at (7,0) {$0$};
\foreach\x in {1,2,3}{
\node[text centered,below] at (7-\x,-1) {\tiny $m_{\x}(\mu)$};
\node[text centered,above] at (7-\x,1) {\tiny $m_{\x}(\lambda)$};
}
\foreach\x in {4,5}{
\node[text centered,below] at (7-\x,-1) {\tiny $\cdots$};
\node[text centered,above] at (7-\x,1) {\tiny $\cdots$};
}
\end{tikzpicture}\right)&=\mathbf{1}_{l(\lambda)=l(\mu)+1}Q_{\lambda/\mu}(a;0,t),
    \\w_a
\left(
\begin{tikzpicture}[baseline=(current bounding box.center),>=stealth,scale=0.8]
\draw[lred,ultra thick] (0,0) -- (7,0);
\foreach\x in {1,...,6}{
\draw[lred,line width=10pt] (7-\x,-1) -- (7-\x,1);
}
\node[left] at (0,0) {$1$};
\draw[ultra thick,->] (0,0) -- (1,0);
\draw[ultra thick,->] (6,0) -- (7,0);
\node[right] at (7,0) {$1$};
\foreach\x in {1,2,3}{
\node[text centered,below] at (7-\x,-1) {\tiny $m_{\x}(\mu)$};
\node[text centered,above] at (7-\x,1) {\tiny $m_{\x}(\lambda)$};
}
\foreach\x in {4,5}{
\node[text centered,below] at (7-\x,-1) {\tiny $\cdots$};
\node[text centered,above] at (7-\x,1) {\tiny $\cdots$};
}
\end{tikzpicture}\right)&=\mathbf{1}_{l(\lambda)=l(\mu)}Q_{\lambda/\mu}(a;0,t),
\end{align*}
\endgroup
where the right hand sides of these expressions denote skew Hall--Littlewood polynomials.
\end{lemma}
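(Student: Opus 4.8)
This statement is \cite[Lemma 4.11]{BBCW18}; in a self-contained treatment I would prove it by evaluating a single row of the deformed boson model directly, viewing it as an operator on partitions encoded by their multiplicity vectors $(m_1(\cdot),m_2(\cdot),\dots)$ along the vertical edges. The essential observation is that, since each horizontal edge supports at most one arrow, once one fixes the state of the boundary horizontal edge (empty for \eqref{eq:black-vertices}, occupied for \eqref{eq:red-vertices}) together with the vertical occupations below and above the row, the horizontal state after each successive column is \emph{forced} by the sign of $m_j(\mu)-m_j(\lambda)$. Hence there is at most one admissible configuration in each of the four cases, and every identity follows by reading off its weight.

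I would first record the admissible ``carry'' pattern: with incoming horizontal state $0$ a column must be of the first type in \eqref{eq:black-vertices} (occupations equal) or the second (occupation drops by $1$), and with incoming state $1$ it must be of the third type (occupation rises by $1$) or the fourth (occupations equal), with the obvious analogue for \eqref{eq:red-vertices}. In particular every $m_j$ changes by at most $1$, and a short computation in multiplicity coordinates — essentially the standard description of horizontal strips, cf.\ \cite[Chapter VI]{M79} — shows that an admissible configuration exists precisely when $\mu\subseteq\lambda$, $\lambda/\mu$ is a horizontal strip, and $l(\lambda)-l(\mu)\in\{0,1\}$, with the value pinned down by whether the remaining boundary edge (right for \eqref{eq:black-vertices}, left for \eqref{eq:red-vertices}) is empty or occupied; this produces the indicator factors in the statement. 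For the weight, the powers of the row rapidity contributed by vertices with an occupied right horizontal edge telescope: summing the length of each ``emit-then-absorb'' segment (an arrow escaping through the right boundary counted as if it started at its emission column) gives $\sum_j j\,(m_j(\lambda)-m_j(\mu))=|\lambda|-|\mu|$, while the remaining factors — $1-t^{m_j(\mu)}$ from the ``absorb'' columns in \eqref{eq:black-vertices}, and the corresponding $b\,(1-t^{m_j(\lambda)})$ together with the $b$'s from the weight-$b$ vertices in \eqref{eq:red-vertices} — reassemble into $\psi_{\lambda/\mu}(t)$ and $\varphi_{\lambda/\mu}(t)$ respectively. Combining this with Macdonald's one-variable evaluations $P_{\lambda/\mu}(a;0,t)=\psi_{\lambda/\mu}(t)\,a^{|\lambda/\mu|}$ and $Q_{\lambda/\mu}(a;0,t)=\varphi_{\lambda/\mu}(t)\,a^{|\lambda/\mu|}$ finishes all four formulas; alternatively, the two $Q$-identities reduce to the two $P$-identities via $Q_{\lambda/\mu}=(b_\lambda(0,t)/b_\mu(0,t))\,P_{\lambda/\mu}$.

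The genuinely delicate step is this last matching: identifying the set of ``absorb'' columns of the unique configuration, and the product of the associated vertex weights (including the bookkeeping of the $b$ powers in the \eqref{eq:red-vertices} case), with the textbook definitions of $\psi_{\lambda/\mu}(t)$ and $\varphi_{\lambda/\mu}(t)$ — this requires care with conventions, namely which partition sits on the bottom versus the top in \eqref{eq:black-vertices} as opposed to \eqref{eq:red-vertices}, and the opposite left/right orientation of the horizontal arrows in the two weight families. Everything else is a finite, forced computation; one could also derive the lemma more algebraically from the exchange relations satisfied by these row operators (which are governed by the same $R$-matrix as in Proposition \ref{prop: YB boson}), but the direct evaluation above seems shortest.
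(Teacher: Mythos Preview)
The paper does not supply its own proof of this lemma; it is quoted directly from \cite[Lemma 4.11]{BBCW18}. Your self-contained argument---direct evaluation of the unique admissible row configuration and matching the resulting weight to the one-variable Hall--Littlewood branching coefficients $\psi_{\lambda/\mu}(t)$, $\varphi_{\lambda/\mu}(t)$---is correct and is essentially the standard proof (indeed the one given in \cite{BBCW18}).
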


We also need the following lemma which allows powers of $u$ to be absorbed into the parameters $a$ and $b$.

\begin{lemma}
\label{lem: u power shift}
Let $n_1,\dotsc$ and $m_1,\dotsc$ be finitely supported sequences of non-negative integers. We have the following equalities of one-row partition functions:
\begin{multline*}
    u^{\sum_i in_i} w_{ua}
\left(
\begin{tikzpicture}[baseline=(current bounding box.center),>=stealth,scale=0.8]
\draw[lgray,ultra thick] (0,0) -- (7,0);
\foreach\x in {1,...,6}{
\draw[lgray,line width=10pt] (7-\x,-1) -- (7-\x,1);
}
\node[left] at (0,0) {$0$};
\node[right] at (7,0) {$j$};
\foreach\x in {1,2,3}{
\node[text centered,below] at (7-\x,-1) {\tiny $m_{\x}$};
\node[text centered,above] at (7-\x,1) {\tiny $n_{\x}$};
}
\foreach\x in {4,5}{
\node[text centered,below] at (7-\x,-1) {\tiny $\cdots$};
\node[text centered,above] at (7-\x,1) {\tiny $\cdots$};
}
\end{tikzpicture}\right)
\\
=u^{\sum_i im_i}w_a
\left(
\begin{tikzpicture}[baseline=(current bounding box.center),>=stealth,scale=0.8]
\draw[lgray,ultra thick] (0,0) -- (7,0);
\foreach\x in {1,...,6}{
\draw[lgray,line width=10pt] (7-\x,-1) -- (7-\x,1);
}
\node[left] at (0,0) {$0$};
\node[right] at (7,0) {$j$};
\foreach\x in {1,2,3}{
\node[text centered,below] at (7-\x,-1) {\tiny $m_{\x}$};
\node[text centered,above] at (7-\x,1) {\tiny $n_{\x}$};
}
\foreach\x in {4,5}{
\node[text centered,below] at (7-\x,-1) {\tiny $\cdots$};
\node[text centered,above] at (7-\x,1) {\tiny $\cdots$};
}
\end{tikzpicture}\right),
\end{multline*}
and
\begin{multline*}
    u^{\sum_i in_i} w_{a}
\left(
\begin{tikzpicture}[baseline=(current bounding box.center),>=stealth,scale=0.8]
\draw[lred,ultra thick] (0,0) -- (7,0);
\foreach\x in {1,...,6}{
\draw[lred,line width=10pt] (7-\x,-1) -- (7-\x,1);
}
\node[left] at (0,0) {$1$};
\node[right] at (7,0) {$j$};
\foreach\x in {1,2,3}{
\node[text centered,below] at (7-\x,-1) {\tiny $m_{\x}$};
\node[text centered,above] at (7-\x,1) {\tiny $n_{\x}$};
}
\foreach\x in {4,5}{
\node[text centered,below] at (7-\x,-1) {\tiny $\cdots$};
\node[text centered,above] at (7-\x,1) {\tiny $\cdots$};
\draw[ultra thick,->] (0,0) -- (1,0);
}
\end{tikzpicture}\right)
\\
=u^{\sum_i im_i}w_{ua}
\left(
\begin{tikzpicture}[baseline=(current bounding box.center),>=stealth,scale=0.8]
\draw[lred,ultra thick] (0,0) -- (7,0);
\foreach\x in {1,...,6}{
\draw[lred,line width=10pt] (7-\x,-1) -- (7-\x,1);
}
\node[left] at (0,0) {$1$};
\node[right] at (7,0) {$j$};
\foreach\x in {1,2,3}{
\node[text centered,below] at (7-\x,-1) {\tiny $m_{\x}$};
\node[text centered,above] at (7-\x,1) {\tiny $n_{\x}$};
}
\foreach\x in {4,5}{
\node[text centered,below] at (7-\x,-1) {\tiny $\cdots$};
\node[text centered,above] at (7-\x,1) {\tiny $\cdots$};
\draw[ultra thick,->] (0,0) -- (1,0);
}
\end{tikzpicture}\right).
\end{multline*}
\end{lemma}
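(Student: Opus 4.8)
The plan is to observe that a one-row partition function in either model \eqref{eq:black-vertices} or \eqref{eq:red-vertices}, with the numbers $(m_i)$, $(n_i)$ of vertical arrows and the left and right horizontal boundaries all fixed, is supported on at most one arrow configuration, and then to track exactly how the row rapidity enters that configuration.

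First I would record that arrow conservation at each vertex, together with the constraint that each horizontal edge carries at most one arrow, forces the occupation $e_i \in \{0,1\}$ of the $i$th internal horizontal edge to satisfy the recursion $e_{i-1} = e_i + m_i - n_i$ for $i \geq 1$; combined with the prescribed boundary values $e_0 = j$ and $e_i \to 0$ (black model) or $e_i \to 1$ (red model) as $i \to \infty$, in accordance with \eqref{eq:L-limit}, this either pins down every $e_i$ uniquely or is infeasible. In particular the admissible configuration, when it exists, is unique and does not depend on the rapidity; and when it does not exist, \emph{both} sides of the relevant identity vanish. This reduces each equality to an identity between single monomials.

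Next I would count the power of the rapidity in this unique configuration. In \eqref{eq:black-vertices} the rapidity $a$ occurs exactly at the two vertex types carrying an arrow on the outgoing right edge, so the power of $a$ is $\sum_{i \geq 1} e_{i-1} = \sum_{i \geq 0} e_i$; solving the recursion as $e_i = \sum_{k > i}(m_k - n_k)$ and exchanging the order of summation (a telescoping computation) gives $\sum_{i \geq 0} e_i = \sum_{i \geq 1} i\,(m_i - n_i)$. Hence $a \mapsto ua$ multiplies the black one-row partition function by $u^{\sum_i i m_i - \sum_i i n_i}$, so that
\[
u^{\sum_i i n_i}\,w_{ua}(\cdots) = u^{\sum_i i n_i + \sum_i i m_i - \sum_i i n_i}\,w_a(\cdots) = u^{\sum_i i m_i}\,w_a(\cdots),
\]
which is the first claimed identity. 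For \eqref{eq:red-vertices} the rapidity occurs instead at the two vertex types with \emph{no} arrow on the outgoing right edge, so its power is $\sum_{i \geq 1}(1 - e_{i-1}) = \sum_{i \geq 0}(1 - e_i) = \sum_{i \geq 1} i\,(n_i - m_i)$ by the same argument (now using $e_i = 1 - \sum_{k > i}(n_k - m_k)$); thus $a \mapsto ua$ multiplies the red one-row partition function by $u^{\sum_i i n_i - \sum_i i m_i}$, and the same rearrangement, read in the mirror direction, produces the second claimed identity.

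The argument is elementary throughout; the only place that needs care is the bookkeeping in the telescoping sum that rewrites $\sum_{i \geq 0} e_i$, and its complement $\sum_{i \geq 0}(1 - e_i)$, in terms of $\sum_i i\,(m_i - n_i)$, together with the observation that passing to the infinite rows of \eqref{eq:L-limit} is harmless because $(m_i)$ and $(n_i)$ are finitely supported, so the recursion stabilizes and every sum appearing above is finite.
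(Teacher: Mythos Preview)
Your proof is correct and follows essentially the same approach as the paper's: both arguments boil down to the observation that the exponent of the row rapidity in a one-row partition function equals the total number of horizontal arrow segments (black model) or horizontal non-arrow segments (red model), and that this count depends only on the boundary data via $\sum_i i(m_i-n_i)$. Your version is more explicit---you pin down the unique configuration via the recursion $e_{i-1}=e_i+m_i-n_i$ and carry out the telescoping sum---whereas the paper argues more tersely that ``each time an arrow travels right it picks up a power of $u$'', but the content is the same.
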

\begin{proof}
The first equality comes from the fact that the differences in the powers of $u$ comes from arrows travelling right, and this is compensated by the fact that each time an arrow travels right, it picks up a power of $u$ due to the parameter on the left hand side being $ua$.

The second equality is similar, except that there is an extra arrow entering from the left. This adds an extra power of $u$ which is not counted on the right hand side, and subtracting the contribution coming from arrows traveling right we obtain a power of $u$ each time there is no arrow travelling horizontally, which is accounted for by the power of $u$ due to the parameter on the right hand side being $ua$.
\end{proof}

\subsection{Proof of Theorem \ref{thm: HL 6vm}}
With the deformed boson model, we are now able to give a proof of Theorem \ref{thm: HL 6vm} in the same spirit as that of \cite{BBW16}.

\begin{proof}[Proof of Theorem \ref{thm: HL 6vm}]
We use Lemma \ref{lem: boson HL} and the definition of the periodic Hall--Littlewood process (see equation \eqref{eq:periodic-macdonald} with $q=0$) to write

\begin{multline}
\label{eq:rb-lattice}
\mathbb{PHL}^{a,b}_{t,u}(l(\lambda^{(0)})=n,[\vec\lambda]=S_1,[\vec\mu]^c=S_2)
\\=
\frac{1}{\Phi(a,b;0,t,u)}
\times\sum_{\mu,\lambda}u^{|\mu|}
\begin{tikzpicture}[scale=0.8,baseline=(current bounding box.center),>=stealth]
%lattice
\foreach\x in {0,...,6}{
\draw[lgray,line width=10pt] (\x,2) -- (\x,7);
}
\foreach\y in {3,...,6}{
\draw[lgray,thick] (-1,\y) -- (7,\y);
}
\foreach\x in {0,...,6}{
\draw[lred,line width=10pt] (\x,-2) -- (\x,2);
}
\foreach\y in {-1,...,1}{
\draw[lred,thick] (-1,\y) -- (7,\y);
}
\draw[ultra thick,->] (-1,-1) -- (0,-1); \draw[ultra thick,->] (-1,0) -- (0,0); \draw[ultra thick,->] (-1,1) -- (0,1);
\draw[ultra thick,->] (6,1) -- (7,1); \draw[ultra thick,->] (6,4) -- (7,4); \draw[ultra thick,->] (6,6) -- (7,6);
\node[right] at (7,-1) {$S_2(1)$}; 
\node[right] at (7,0.2) {$\vdots$}; 
\node[right] at (7,1) {$S_2(N)$};
\node[right] at (7,3) {$S_1(M)$};
\node[right] at (7,4.5) {$\vdots$};
\node[right] at (7,6) {$S_1(1)$};
%labels
\node[left] at (-1,6) {$a_1$};
\node[left] at (-1,4.5) {$\vdots$};
\node[left] at (-1,3) {$a_M$};
\node[left] at (-1,1) {$b_N$};
\node[left] at (-1,0.2) {$\vdots$};
\node[left] at (-1,-1) {$b_1$};
\node[below] at (6,-2) {\tiny $m_1(\mu)$};
\node[below] at (5,-2) {\tiny $m_2(\mu)$};
\node[below] at (4,-2) {\tiny $m_3(\mu)$};
\node[below] at (6,7.75) {\tiny $m_1(\mu)$};
\node[below] at (5,7.75) {\tiny $m_2(\mu)$};
\node[below] at (4,7.75) {\tiny $m_3(\mu)$};
\node[below] at (6,2.5) {\tiny $m_1(\lambda)$};
\node[below] at (5,2.5) {\tiny $m_2(\lambda)$};
\node[below] at (4,2.5) {\tiny $m_3(\lambda)$};
\node[below] at (1,7.75) {$\longleftarrow$};
\node[below] at (3,7.75) {$\cdots$};
\node[below] at (0,7.75) {$\infty$};
\node[below] at (1,-2) {$\longleftarrow$};
\node[below] at (3,-2) {$\cdots$};
\node[below] at (0,-2) {$\infty$};
\end{tikzpicture}
\end{multline}
Here, the sum is over all $\lambda$ and $\mu$ such that $l(\mu)=n$, which can equivalently be thought of as counting the number of arrows entering the bottom, which is equal to the number exiting the top. This picture can be viewed as lying on a cylinder, with the top vertical edges being identified with those at the bottom. The $u^{|\mu|}$ factor penalizes windings, thus ensuring convergence.

We then use Proposition \ref{prop: YB boson} iteratively to swap the red and gray portions of the deformed boson model, introducing an $M\times N$ rectangle of Yang--Baxter vertices to the right, and obtaining
\begin{multline}
\label{eq:first-iteration}
\mathbb{PHL}^{a,b}_{t,u}(l(\lambda^{(0)})=n,[\vec\lambda]=S_1,[\vec\mu]^c=S_2)
=
\frac{1}{\Phi(a,b;0,t,u)}\prod_{i,j}\frac{1-ta_ib_j}{1-a_ib_j}\times\\\sum_{\mu,\lambda}u^{|\mu|}
\begin{tikzpicture}[scale=0.8,baseline=(current bounding box.center),>=stealth]
%lattice
\foreach\x in {0,...,6}{
\draw[lgray,line width=10pt] (\x,-2) -- (\x,3);
}
\foreach\y in {-1,...,2}{
\draw[lgray,thick] (-1,\y) -- (7,\y);
}
\foreach\x in {0,...,6}{
\draw[lred,line width=10pt] (\x,3) -- (\x,7);
}
\foreach\y in {4,...,6}{
\draw[lred,thick] (-1,\y) -- (7,\y);
}
%side lattice
\draw[ultra thick, gray] (7,3)--(9.5,0.5) node[midway, below left ,black]{$W_1$};
\draw[thick, dotted] (7,6) -- (11,2) node[below right] {\tiny $S_2(N)$};
\draw[thick, dotted] (7,5) -- (10.5,1.5);
\draw (10.8,1.35)node{\reflectbox{$\ddots$}};
\draw[thick, dotted] (7,4) -- (10,1) node[below right] {\tiny $S_2(1)$};
\draw[ultra thick,->] (10.5,2.5) -- (11,2);
\draw[thick, dotted] (7,2) -- (9.5,4.5) node[above right] {\tiny $S_1(1)$};
\draw[thick, dotted] (7,1) -- (10,4);
\draw[thick, dotted] (7,0) -- (10.5,3.5);
\draw (10.25,3.75)node[above right]{$\ddots$};
\draw[thick, dotted] (7,-1) -- (11,3) node[above right] {\tiny $S_1(M)$};
\draw[ultra thick,->] (10,3) -- (10.5,3.5); \draw[ultra thick,->] (9,4) -- (9.5,4.5);
\draw[rotate around={-45:(7.75,3)},fill=white] (7.75,3)rectangle({7.75+3.5*sqrt(2)/2},{3+2.5*sqrt(2)/2})node[midway]{$1$};
%black paths
\draw[ultra thick,->] (-1,6) -- (0,6); \draw[ultra thick,->] (-1,5) -- (0,5); \draw[ultra thick,->] (-1,4) -- (0,4);
%\draw[ultra thick,->] (6,1) -- (7,1); \draw[ultra thick,->] (6,3) -- (7,3); \draw[ultra thick,->] (6,5) -- (7,5);
%\node[right] at (7,1) {$\delta_{-1}=1$}; \node[right] at (7,3) {$\delta_{0}=1$}; \node[right] at (7,5) {$\delta_{2}=1$};
%\node[right] at (7,-1) {$\delta_{-3}=0$}; \node[right] at (7,0) {$\delta_{-2}=0$}; \node[right] at (7,4) {$\delta_{1}=0$};
%\node[right] at (7,6) {$\delta_{3}=0$};
%labels
\node[left] at (-1,2) {$a_1$};
\node[left] at (-1,0.5) {$\vdots$};
\node[left] at (-1,-1) {$a_M$};
\node[left] at (-1,6) {$b_N$};
\node[left] at (-1,5.2) {$\vdots$};
\node[left] at (-1,4) {$b_1$};
\node[below] at (6,-2) {\tiny $m_1(\mu)$};
\node[below] at (5,-2) {\tiny $m_2(\mu)$};
\node[below] at (4,-2) {\tiny $m_3(\mu)$};
\node[below] at (6,7.75) {\tiny $m_1(\mu)$};
\node[below] at (5,7.75) {\tiny $m_2(\mu)$};
\node[below] at (4,7.75) {\tiny $m_3(\mu)$};
\node[below] at (6,3) {\tiny $m_1(\lambda)$};
\node[below] at (5,3) {\tiny $m_2(\lambda)$};
\node[below] at (4,3) {\tiny $m_3(\lambda)$};
\node[below] at (1,7.75) {$\longleftarrow$};
\node[below] at (3,7.75) {$\cdots$};
\node[below] at (0,7.75) {$\infty$};
\node[below] at (1,-2) {$\longleftarrow$};
\node[below] at (3,-2) {$\cdots$};
\node[below] at (0,-2) {$\infty$};
\end{tikzpicture}
\end{multline}

The rectangle on the right represents the six vertex model on an $M\times N$ domain, with parameter $z=1$. We now use Lemma \ref{lem: u power shift} to change the $u^{|\mu|}$ factor into $u^{|\lambda|}$, at the cost of multiplying all the $a_i$ by a factor of $u$. Moreover, we can replace the constraint that $l(\mu)=n$ with the constraint that $l(\lambda)+W_1=n$, where $W_1$ is the number of arrows entering the six vertex model from the bottom (in the diagram, this is the number of arrows crossing the gray line indicated) using conservation of the number of arrows entering and exiting the grey portion of the deformed boson model. Then we can exchange the roles of $\lambda$ and $\mu$, and after shifting the diagram to start from the bottom of the red portion of the lattice (or alternatively, simply rotating the cylinder on which this diagram is drawn), we obtain the expression
\begin{multline}
\mathbb{PHL}^{a,b}_{t,u}(l(\lambda^{(0)})=n,[\vec\lambda]=S_1,[\vec\mu]^c=S_2)
=
\frac{1}{\Phi(a,b;0,t,u)}\prod_{i,j}\frac{1-ta_ib_j}{1-a_ib_j}
\times
\\\sum_{\mu,\lambda}u^{|\mu|}
\begin{tikzpicture}[scale=0.8,baseline=(current bounding box.center),>=stealth]
%lattice
\foreach\x in {0,...,6}{
\draw[lgray,line width=10pt] (\x,2) -- (\x,7);
}
\foreach\y in {3,...,6}{
\draw[lgray,thick] (-1,\y) -- (7,\y);
}
\foreach\x in {0,...,6}{
\draw[lred,line width=10pt] (\x,-2) -- (\x,2);
}
\foreach\y in {-1,...,1}{
\draw[lred,thick] (-1,\y) -- (7,\y);
}
\draw[ultra thick,->] (-1,-1) -- (0,-1); \draw[ultra thick,->] (-1,0) -- (0,0); \draw[ultra thick,->] (-1,1) -- (0,1);
%\draw[ultra thick,->] (6,1) -- (7,1); \draw[ultra thick,->] (6,4) -- (7,4); \draw[ultra thick,->] (6,6) -- (7,6);
% \node[right] at (7,-1) {$S_2(1)$}; 
% \node[right] at (7,0.2) {$\vdots$}; 
% \node[right] at (7,1) {$S_2(N)$};
% \node[right] at (7,3) {$S_1(M)$};
% \node[right] at (7,4.5) {$\vdots$};
% \node[right] at (7,6) {$S_1(1)$};
\draw[thick, dotted] (7,6) -- (7.5,6.5);
\draw[thick, dotted] (7,5) -- (8,6);
\draw[thick, dotted] (7,4) -- (8.5,5.5);
\draw[thick, dotted] (7,3) -- (9,5);
\draw[ultra thick, gray] (6.75,6.75)--(9.25,4.25);
\begin{scope}[yshift=-5cm]
\draw[ultra thick, gray] (7,3)--(9.5,0.5) node[midway, below left ,black]{$W_1$};
\draw[thick, dotted] (7,6) -- (11,2) node[below right] {\tiny $S_2(N)$};
\draw[thick, dotted] (7,5) -- (10.5,1.5);
\draw (10.8,1.35)node{\reflectbox{$\ddots$}};
\draw[thick, dotted] (7,4) -- (10,1) node[below right] {\tiny $S_2(1)$};
\draw[ultra thick,->] (10.5,2.5) -- (11,2);
\draw[thick, dotted] (7,2) -- (9.5,4.5) node[above right] {\tiny $S_1(1)$};
\draw[thick, dotted] (7.5,1.5) -- (10,4);
\draw[thick, dotted] (8,1) -- (10.5,3.5);
\draw (10.25,3.75)node[above right]{$\ddots$};
\draw[thick, dotted] (8.5,0.5) -- (11,3) node[above right] {\tiny $S_1(M)$};
\draw[ultra thick,->] (10,3) -- (10.5,3.5); \draw[ultra thick,->] (9,4) -- (9.5,4.5);
\draw[rotate around={-45:(7.75,3)},fill=white] (7.75,3)rectangle({7.75+3.5*sqrt(2)/2},{3+2.5*sqrt(2)/2})node[midway]{$1$};
\end{scope}
%labels
\node[left] at (-1,6) {$ua_1$};
\node[left] at (-1,4.5) {$\vdots$};
\node[left] at (-1,3) {$ua_M$};
\node[left] at (-1,1) {$b_N$};
\node[left] at (-1,0.2) {$\vdots$};
\node[left] at (-1,-1) {$b_1$};
\node[below] at (6,-2) {\tiny $m_1(\mu)$};
\node[below] at (5,-2) {\tiny $m_2(\mu)$};
\node[below] at (4,-2) {\tiny $m_3(\mu)$};
\node[below] at (6,7.75) {\tiny $m_1(\mu)$};
\node[below] at (5,7.75) {\tiny $m_2(\mu)$};
\node[below] at (4,7.75) {\tiny $m_3(\mu)$};
\node[below] at (6,2.5) {\tiny $m_1(\lambda)$};
\node[below] at (5,2.5) {\tiny $m_2(\lambda)$};
\node[below] at (4,2.5) {\tiny $m_3(\lambda)$};
\node[below] at (1,7.75) {$\longleftarrow$};
\node[below] at (3,7.75) {$\cdots$};
\node[below] at (0,7.75) {$\infty$};
\node[below] at (1,-2) {$\longleftarrow$};
\node[below] at (3,-2) {$\cdots$};
\node[below] at (0,-2) {$\infty$};
\end{tikzpicture}
\end{multline}
The arrows exiting from the right of the grey portion of the lattice enter the six vertex model from the bottom; this is indicated by the grey line which shows the identified edges. At this point, we recognize that the bosonic portion of the diagram is identical to the one we started with, except that the $a_i$ have all been multiplied by $u$. We can thus iterate this procedure $L$ times, obtaining

\begin{multline}
\label{eq:final-bosonic-lattice}
\mathbb{PHL}^{a,b}_{t,u}(l(\lambda^{(0)})=n,[\vec\lambda]=S_1,[\vec\mu]^c=S_2)
=
\frac{1}{\Phi(a,b;0,t,u)}\prod_{l=0}^{L-1}\prod_{i,j}\frac{1-tu^la_ib_j}{1-u^la_ib_j}
\times
\\\sum_{\mu,\lambda}u^{|\mu|}
\begin{tikzpicture}[scale=0.8,baseline=(current bounding box.center),>=stealth]
%lattice
\foreach\x in {0,...,6}{
\draw[lgray,line width=10pt] (\x,2) -- (\x,7);
}
\foreach\y in {3,...,6}{
\draw[lgray,thick] (-1,\y) -- (7,\y);
}
\foreach\x in {0,...,6}{
\draw[lred,line width=10pt] (\x,-2) -- (\x,2);
}
\foreach\y in {-1,...,1}{
\draw[lred,thick] (-1,\y) -- (7,\y);
}
\draw[ultra thick,->] (-1,-1) -- (0,-1); \draw[ultra thick,->] (-1,0) -- (0,0); \draw[ultra thick,->] (-1,1) -- (0,1);
%\draw[ultra thick,->] (6,1) -- (7,1); \draw[ultra thick,->] (6,4) -- (7,4); \draw[ultra thick,->] (6,6) -- (7,6);
% \node[right] at (7,-1) {$S_2(1)$}; 
% \node[right] at (7,0.2) {$\vdots$}; 
% \node[right] at (7,1) {$S_2(N)$};
% \node[right] at (7,3) {$S_1(M)$};
% \node[right] at (7,4.5) {$\vdots$};
% \node[right] at (7,6) {$S_1(1)$};
\draw[thick, dotted] (7,6) -- (7.5,6.5);
\draw[thick, dotted] (7,5) -- (8,6);
\draw[thick, dotted] (7,4) -- (8.5,5.5);
\draw[thick, dotted] (7,3) -- (9,5);
\draw[ultra thick, gray] (6.75,6.75)--(9.25,4.25);
\begin{scope}[yshift=-5cm]
\draw[ultra thick, gray] (7,3)--(9.5,0.5) node[midway, below left ,black]{$W_L$};
\draw[thick, dotted] (7,6) -- (11,2);
\draw[thick, dotted] (7,5) -- (10.5,1.5);
\draw[thick, dotted] (7,4) -- (10,1) ;
\draw (11,1)node[]{$\ddots$};
\draw[thick, dotted] (7,2) -- (9.5,4.5) ;
\draw[thick, dotted] (7.5,1.5) -- (10,4);
\draw[thick, dotted] (8,1) -- (10.5,3.5);
\draw[thick, dotted] (8.5,0.5) -- (11,3) ;
\draw[rotate around={-45:(7.75,3)},fill=white] (7.75,3)rectangle({7.75+3.5*sqrt(2)/2},{3+2.5*sqrt(2)/2})node[midway]{$u^{L-1}$};
\end{scope}
\begin{scope}[xshift=3.5cm,yshift=-8.5cm]
\draw[ultra thick, gray] (7,3)--(9.5,0.5) node[midway, below left ,black]{$W_1$};
\draw[ultra thick, gray] (5.5,4.5)--(7.5,2.5);
%\draw[ultra thick, gray] (5.25,8.25)--(8.25,5.25);
\draw[ultra thick, gray, out=-45, in=135] (5.75,12.75) to (5.25,8.25)to(8.25,5.25);
\draw[thick, dotted] (8.5,4.5) -- (11,2) node[below right] {\tiny $S_2(N)$};
\draw[thick, dotted] (8,4) -- (10.5,1.5);
\draw (10.8,1.35)node{\reflectbox{$\ddots$}};
\draw[thick, dotted] (7.5,3.5) -- (10,1) node[below right] {\tiny $S_2(1)$};
\draw[ultra thick,->] (10.5,2.5) -- (11,2);
\draw[thick, dotted] (7,2) -- (9.5,4.5) node[above right] {\tiny $S_1(1)$};
\draw[thick, dotted] (7.5,1.5) -- (10,4);
\draw[thick, dotted] (8,1) -- (10.5,3.5);
\draw (10.25,3.75)node[above right]{$\ddots$};
\draw[thick, dotted] (8.5,0.5) -- (11,3) node[above right] {\tiny $S_1(M)$};
\draw[ultra thick,->] (10,3) -- (10.5,3.5); \draw[ultra thick,->] (9,4) -- (9.5,4.5);
\draw[rotate around={-45:(7.75,3)},fill=white] (7.75,3)rectangle({7.75+3.5*sqrt(2)/2},{3+2.5*sqrt(2)/2})node[midway]{$1$};
\end{scope}
%labels
\node[left] at (-1,6) {$u^La_1$};
\node[left] at (-1,4.5) {$\vdots$};
\node[left] at (-1,3) {$u^La_M$};
\node[left] at (-1,1) {$b_N$};
\node[left] at (-1,0.2) {$\vdots$};
\node[left] at (-1,-1) {$b_1$};
\node[below] at (6,-2) {\tiny $m_1(\mu)$};
\node[below] at (5,-2) {\tiny $m_2(\mu)$};
\node[below] at (4,-2) {\tiny $m_3(\mu)$};
\node[below] at (6,7.75) {\tiny $m_1(\mu)$};
\node[below] at (5,7.75) {\tiny $m_2(\mu)$};
\node[below] at (4,7.75) {\tiny $m_3(\mu)$};
\node[below] at (6,2.5) {\tiny $m_1(\lambda)$};
\node[below] at (5,2.5) {\tiny $m_2(\lambda)$};
\node[below] at (4,2.5) {\tiny $m_3(\lambda)$};
\node[below] at (1,7.75) {$\longleftarrow$};
\node[below] at (3,7.75) {$\cdots$};
\node[below] at (0,7.75) {$\infty$};
\node[below] at (1,-2) {$\longleftarrow$};
\node[below] at (3,-2) {$\cdots$};
\node[below] at (0,-2) {$\infty$};
\end{tikzpicture}
\end{multline}
Here, the arrows exiting the right of the grey portion of the lattice enter the $L$th rectangle in the six vertex model, after which it is periodic, in the sense that arrows exiting a rectangle from the top enter the bottom of the next rectangle, as in the quasi-periodic six vertex model. We indicate the identified edges with gray lines. The constraint that $l(\mu)=n$ is replaced with the constraint that $l(\mu)+W_1+\dotsc+W_L=n$, where $W_i$ indicates the number of arrows entering the bottom of the $i$th rectangle of the six vertex model (or alternatively the number of arrows crossing the gray line).

Finally, we take $L\to\infty$. The parameters $u^La_i\to 0$ so the grey portion of the lattice does not allow arrows to exit from the right of any vertex. This forces all arrows in the red portion to travel right, except that possibly they can wrap around the cylinder any number of times. Notice that this decouples the two sides of the diagram, since eventually all arrows will exit the bosonic lattice through the red portion. Then as the deformed boson model has a limit, the six vertex model portion of the diagram must as well, and so it by definition converges to the quasi-periodic six vertex model.

We then see that the expression computes the probability that $S_1$ and $S_2$ are the locations of the exiting arrows in the quasi-periodic six vertex model, and if $W=\sum_{i \geq 1} W_i$ is the number of times arrows wind, then $W+l(\lambda)=n$, where $\lambda$ now has distribution proportional to $u^{|\lambda|}$; the latter fact is a consequence of the now trivialized bosonic portion of \eqref{eq:final-bosonic-lattice}. We write $\chi=l(\lambda)$ and note that it has a $u$-geometric distribution. Finally, the factors introduced from the Yang--Baxter equation and the normalization for the $u$-geometric $\chi$ exactly cancel $\Phi(a,b;0,t,u)$; cf. the equation immediately following \eqref{eq:periodic-macdonald}, with $q=0$.
\end{proof}
\begin{remark}
As noted in Remark \ref{rmk: down right path}, one can modify the proof to allow arbitrary down-right domains for the outgoing arrows, as was done in \cite{BBW16}. In the first step of the proof, one starts with an arbitrary combination of $M$ grey \eqref{eq:black-vertices} and $N$ red \eqref{eq:red-vertices} rows, and after the first iteration of Yang--Baxter moves, a down-right domain emerges from the right of \eqref{eq:first-iteration} (rather than a rectangular domain). After this, the proof proceeds identically.
\end{remark}

\subsection{Stationary periodic six vertex model}
The \emph{periodic six vertex model} can be defined as the $u=1$ case of the quasi-periodic six vertex model of length $L$ (one can also take $L\to\infty$, but certain observables like the number of times arrows wind will diverge). The periodic six vertex model can be viewed as a Markov chain in discrete time, with a single step given by attaching a single six vertex model on an $M\times N$ domain to the end (thus, the states lie in $\{0,1\}^{M+N}$). In particular, it has stationary distributions. 

We will not need the exact form of the stationary distributions in what follows, but since they are simple we give an explanation. It is clear that the number of arrows is preserved. In fact, the stationary distributions can be obtained by conditioning on product Bernoulli distributions, due to the following lemma, which can be proven by a direct calculation.

\begin{lemma}
If we consider a six vertex model vertex with column and row rapidities $a$ and $b$ respectively, and we allow an arrow to enter from the bottom and left with probabilities $a/(1+a)$ and $1/(1+b)$ respectively independently of each other, then arrows exit from the top and right with the same probabilities, independently.
\end{lemma}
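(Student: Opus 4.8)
The plan is a direct case check at the single vertex, arranged so that only one scalar identity actually needs to be verified. Let $B,L\in\{0,1\}$ be the indicators that an arrow enters from the bottom and from the left, and $T,R\in\{0,1\}$ the indicators that an arrow exits from the top and from the right. By hypothesis $B$ and $L$ are independent with $\PP(B=1)=a/(1+a)$ and $\PP(L=1)=1/(1+b)$, and the conditional law of $(T,R)$ given $(B,L)$ is the one prescribed by Figure \ref{fig:vtx wts}, with $\mathbf{p}=\frac{1-ab}{1-tab}$. Each of the six vertex configurations conserves arrows, so $T+R=B+L$ holds identically; in particular the event $\{T=1,R=1\}$ coincides with $\{B=1,L=1\}$, so $\PP(T=1,R=1)=\PP(B=1)\PP(L=1)=a/((1+a)(1+b))$ with no computation at all.

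Consequently the only real content of the lemma is the equality $\PP(T=1)=a/(1+a)$. Conditioning on $(B,L)$, the empty configuration contributes $0$, the full configuration contributes $\PP(B=1,L=1)\cdot 1$, and the two mixed configurations contribute $\PP(B=1,L=0)$ times the weight of the vertex with an arrow in from the bottom and out from the top, plus $\PP(B=0,L=1)$ times the weight of the vertex with an arrow in from the left and out from the top. Reading those two weights off Figure \ref{fig:vtx wts} and clearing the common denominator $(1+a)(1+b)$, the identity $\PP(T=1)=a/(1+a)$ collapses to the single relation $\mathbf{p}(1-tab)=1-ab$, which is exactly the definition of $\mathbf{p}$. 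This is the ``direct calculation'' referred to in the statement, and it is one line.

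Once $\PP(T=1)=a/(1+a)$ is in hand, everything else is forced. Taking expectations in $T+R=B+L$ gives $\PP(R=1)=\PP(B=1)+\PP(L=1)-\PP(T=1)=1/(1+b)$, the claimed marginal for $R$. Finally, the joint law of $(T,R)$ on $\{0,1\}^2$ is pinned down by $\PP(T=1)$, $\PP(R=1)$ and $\PP(T=1,R=1)$; subtracting gives $\PP(T=1,R=0)=\tfrac{a}{1+a}\cdot\tfrac{b}{1+b}$, $\PP(T=0,R=1)=\tfrac{1}{1+a}\cdot\tfrac{1}{1+b}$ and $\PP(T=0,R=0)=\tfrac{1}{1+a}\cdot\tfrac{b}{1+b}$, which is precisely the product of a $\mathrm{Bernoulli}(a/(1+a))$ and an independent $\mathrm{Bernoulli}(1/(1+b))$. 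Hence $T$ and $R$ are independent with the asserted probabilities.

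There is essentially no obstacle to overcome: the computation is finite and reduces to a single identity equivalent to the very formula defining $\mathbf{p}$. The only point requiring care is to match the six pictures of Figure \ref{fig:vtx wts} to the correct incoming/outgoing occupation patterns, since an incorrect matching makes the reduced scalar identity fail; once the weights are identified correctly, arrow conservation and the independence of the two inputs supply the rest.
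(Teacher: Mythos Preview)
Your argument is correct and is precisely the ``direct calculation'' the paper invokes without writing out; the paper gives no further details beyond that phrase. Your use of arrow conservation $T+R=B+L$ to reduce the four joint probabilities to the single marginal computation $\PP(T=1)=a/(1+a)$, which in turn collapses to the defining relation $\mathbf{p}(1-tab)=1-ab$, is an efficient way to organize the check and avoids verifying all four cells separately.
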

By using this lemma, one can propagate a product of Bernoulli random variables associated to columns and rows of parameters $a_i/(1+a_i)$ and $1/(1+b_j)$ respectively, showing that these distributions are stationary. Since the number of arrows are conserved, these distributions are still stationary even after conditioning on the number of arrows, and convex combinations of these give all stationary distributions.

In the finite case, it is clear that simply setting $u=1$ recovers the periodic model, but it is not immediately clear what one obtains in the $u\to 1$ limit of the quasi-periodic six vertex model with $L=\infty$, which is what is related to the periodic Hall--Littlewood process. We show that such a limit can be taken and gives the stationary periodic six vertex model after restricting to any finite portion of the model. It is clear that it suffices to show that the outgoing edges are stationary, since after that the evolution of the arrows simply follows the usual six vertex model rules.

\begin{proposition}
\label{prop: stat 6vm}
As $u\to 1$, $(S_1,S_2)$ converge in distribution to the stationary distribution for the periodic six vertex model conditioned to have $N$ arrows.
\end{proposition}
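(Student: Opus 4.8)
The plan is to exploit a self-similarity of the quasi-periodic six vertex model and reduce the statement to the ergodicity of the periodic six vertex Markov chain restricted to a fixed number of arrows. For a tuple of column rapidities $c=(c_1,\dots,c_M)$, write $T_c$ for the transition operator on $\{0,1\}^{M+N}$ of one step of the periodic six vertex chain with column rapidities $c$ and row rapidities $b$: it sends the (bottom-incoming, left-incoming) data of an $M\times N$ domain to its (top-outgoing, right-outgoing) data. Its entries are polynomials in the vertex weights, so $T_c$ depends continuously on $c$, and in particular $T_{u^{j}a}\to T_a$ as $u\to 1$, uniformly over $j$ in any bounded range. Let $\pi^{(u)}_c$ denote the law of $(S_1,S_2)$ in the quasi-periodic six vertex model with winding parameter $u$, column rapidities $c$ and row rapidities $b$; by the $L\to\infty$ convergence established through Theorem \ref{thm: HL 6vm} (see \eqref{eq:final-bosonic-lattice} and the remark following the theorem) this is a genuine probability measure on $\{0,1\}^{M+N}$, and it is supported on the configurations with exactly $N$ arrows since the boundary data $T_2=(1,\dots,1)$, $T_1=(0,\dots,0)$ feeds $N$ arrows into the tower and every vertex conserves arrows.

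The key step is the recursion
\begin{equation*}
\pi^{(u)}_{a}=T_{a}\,\pi^{(u)}_{ua}.
\end{equation*}
This holds because removing the first copy ($l=1$) of the quasi-periodic tower leaves the copies $l=2,3,\dots$, which themselves form a quasi-periodic six vertex model with column rapidities $ua$ (copy $l$ carries column rapidities $u^{l-1}a=u^{l-2}(ua)$) and with the same boundary prescription at infinity; the law of the arrows it emits is $\pi^{(u)}_{ua}$, and those arrows enter the bottom and left of the first copy, whose outputs $(S_1,S_2)$ are therefore distributed as $T_a\pi^{(u)}_{ua}$. Iterating gives $\pi^{(u)}_{a}=T_{a}T_{ua}\cdots T_{u^{k-1}a}\,\pi^{(u)}_{u^{k}a}$ for every $k\ge 1$.

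Next I would invoke ergodicity on the finite $N$-arrow sector. By the lemma preceding this proposition, the product-Bernoulli measure with column parameters $a_i/(1+a_i)$ and row parameters $1/(1+b_j)$, conditioned to have $N$ arrows — denote it $\mathrm{stat}_N$ — is stationary for $T_a$, and by the classification of stationary distributions recalled before the proposition it is the unique one supported on this sector. The sector is also aperiodic: the state with all $N$ arrows on the row (left/right) edges admits the configuration in which every horizontal arrow passes straight across and no vertical arrow is created, which is a self-loop of positive probability (its weight is a product of factors $1-\mathbf{p}_{i,j}$ or $1-t\mathbf{p}_{i,j}$, all positive), and this state has positive $\mathrm{stat}_N$-mass, hence lies in the closed communicating class. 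Therefore $T_a^{k}\mu\to\mathrm{stat}_N$ as $k\to\infty$ for every probability measure $\mu$ on $\{0,1\}^{M+N}$, uniformly in $\mu$ since the state space is finite. Given $\varepsilon>0$, choose $k$ with $\|T_a^{k}\mu-\mathrm{stat}_N\|_{\mathrm{TV}}<\varepsilon$ for all such $\mu$, then $\delta>0$ with $\|T_{a}T_{ua}\cdots T_{u^{k-1}a}-T_a^{k}\|<\varepsilon$ for $u\in(1-\delta,1)$ (possible by continuity of $T_c$ in $c$ and finiteness of the state space); the recursion then gives $\|\pi^{(u)}_{a}-\mathrm{stat}_N\|_{\mathrm{TV}}<2\varepsilon$ for such $u$, and letting $\varepsilon\to0$ yields $\pi^{(u)}_a\to\mathrm{stat}_N$, i.e.\ the claimed convergence in distribution.

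I expect the main obstacle to be the dynamical input of the third step — that $T_a$ has a \emph{unique} stationary distribution on the $N$-arrow sector and that the chain converges to it — which rests on the classification of stationary distributions recalled before the proposition together with a short irreducibility/aperiodicity check; everything else (the recursion, continuity of $T_c$, the $\varepsilon$–$\delta$ combination) is soft. A minor point to check carefully is that the tower of copies $2,3,\dots$ really is the quasi-periodic model at column rapidities $ua$, including the handling of the boundary at infinity, but this is immediate from the definition of the model as an $L\to\infty$ limit.
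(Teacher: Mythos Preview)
Your proposal is correct and follows essentially the same approach as the paper: both arguments represent the output law as an infinite product of one-step transition matrices $P(u^i)$ (your $T_{u^i a}$), split off the first $k$ (or $n$) factors, use continuity in the rapidities to approximate that finite product by $P(1)^k$, and invoke ergodicity of the periodic chain on the $N$-arrow sector to control the tail uniformly. Your recursion $\pi^{(u)}_{a}=T_{a}\cdots T_{u^{k-1}a}\,\pi^{(u)}_{u^{k}a}$ is exactly the paper's decomposition $\mu_0\prod_{i\geq 0}P(u^i) = \bigl(\prod_{i\geq k}P(u^i)\mu_0\bigr)\cdot\prod_{i=0}^{k-1}P(u^i)$ rewritten, and your $\varepsilon$--$\delta$ combination is equivalent to the paper's choice of $n$ as a slowly growing function of $u$; you simply give a slightly more explicit aperiodicity check where the paper appeals to genericity of parameters.
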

\begin{proof}
We let $P(z)$ denote the Markov matrix associated to a single $M\times N$ six vertex model with parameter $z$, restricted to the states with exactly $N$ arrows. Then $P(1)$ is an ergodic Markov chain (assuming the parameters are generic). The distribution of the output $(S_1,S_2)$ of the quasi-periodic six vertex model is given by $\mu_0\prod_{i=0}^\infty P(u^i)$, where by convention these matrices will act on row vectors rather than column vectors and these ordered products are taken right to left. Here, $\mu_0$ denotes a deterministic initial condition with arrows entering on the left and not on the bottom. The convergence of this product is implied by Theorem \ref{thm: HL 6vm}. We wish to show that the $u\to 1$ limit of this infinite product exists, and is equal to the stationary distribution for the periodic six vertex model.

In fact, we show that we have equality of the matrices
\begin{equation}
\label{eq: matrix limits}
    \lim_{u\to 1}\prod_{i=0}^{\infty} P(u^i)=\lim_{n\to\infty} P(1)^n,
\end{equation}
and the right hand side is known to give the stationary distribution since $P(1)$ is the transition matrix of an ergodic Markov chain. We let $\|\cdot\|$ denote the operator norm induced by the $l^1$ norm, and recall that this is submultiplicative. Since $P(z)$ is a Markov matrix for all $z$, $\|P(z)\|=1$. We then have
\begin{equation}
\label{eq: stat approx error}
\left\|\prod _{i=0}^\infty P(u^i)-P(1)^n\right\|\leq \left\|\prod _{i=0}^\infty P(u^i)-\prod_{i=n}^\infty P(u^i) P(1)^n\right\|+\left\|\prod_{i=n}^\infty P(u^i) P(1)^n-P(1)^n\right\|.
\end{equation}
The first term satisfies
\begin{equation*}
\left\|\prod _{i=0}^\infty P(u^i)-\prod_{i=n}^\infty P(u^i) P(1)^n\right\|\leq \sum_{i=1}^n \|P(u^i)-P(1)\|,
\end{equation*}
using a telescoping argument, submultiplicativity, and $\|P(z)\|=1$. On the other hand, since $P(1)$ (at least for generic parameters) is the transition matrix of a finite ergodic Markov chain, we have a uniform bound
\begin{equation*}
\|\mu P(1)^n-\nu P(1)^n\|\leq Ce^{-cn}
\end{equation*}
for some constants $c,C>0$, for any distributions $\mu,\nu$. This implies that
\begin{equation*}
\left\|\prod_{i=n}^\infty P(u^i) P(1)^n-P(1)^n\right\|\leq Ce^{-cn},
\end{equation*}
and in particular decays uniformly in $u$. Thus, taking $n$ a function of $u$ growing slowly enough as $u\to 1$ so that the first error term in \eqref{eq: stat approx error} decays (which is possible since $P(z)$ is continuous in $z$ away from a discrete set of poles), we have that the right hand side of \eqref{eq: stat approx error} goes to $0$ as $u\to 1$ and $n\to\infty$. This shows \eqref{eq: matrix limits}.
\end{proof}

\begin{remark}
This relationship between the stationary periodic six vertex model and the periodic Hall--Littlewood process is not completely satisfactory, since the observables $S_1$ and $S_2$ are completely determined by the stationary distribution itself, which has a simple structure. Of more interest is the number of times arrows wind in a finite periodic six vertex model started from stationarity (or any other initial condition). Such problems have been studied for particle systems \cite{BL16,L18,BL21,BL19}, polymers \cite{GK23b}, and the KPZ equation \cite{DGK23, GK23}. It would be very interesting if such an observable could be studied via the periodic Hall--Littlewood measure, but our correspondence falls short of this. 
\end{remark}

\subsection{An identity of symmetric functions}
Using the vertex model representation of the periodic Hall--Littlewood process, we derive here an identity between an infinite sum over a product of two skew Hall--Littlewood polynomials and a Macdonald polynomial indexed by a rectangular partition. The left hand side of this identity essentially takes the form of the distribution function of $\lambda_1$ in the periodic Hall--Littlewood process, modulo some extra multiplicative factors for which we have no immediate probabilistic interpretation. Nevertheless the identity is worth mentioning here, as it provides a direct $t$-generalization of Proposition \ref{prop: vtx model q-whit}.

We first give an {\it uncolored} partition function representation of Macdonald polynomials whose indexing partition has rectangular shape.

\begin{theorem}
\label{thm: macdonald vtx model}
Fix two integers $n,M \geq 1$. The Macdonald polynomial $P_{n^M}(x_1,\dotsc, x_N;q,t)$ is given by
\begin{equation}
\label{eq:rectangle-P-colorblind}
\begin{split}
\frac{(t;t)_M}{(q;q)_{n}}
P_{n^M}(x_1,\dotsc,x_N;q,t)
=
    \sum_{\mu:\mu_1\leq n}q^{|\mu|} \times 
    \begin{tikzpicture}[scale=0.8,baseline=(current bounding box.center),>=stealth]
%lattice
\foreach\x in {0,...,6}{
\draw[lgray,line width=10pt] (\x,2) -- (\x,7);
}
\foreach\y in {3,...,6}{
\draw[lgray,thick] (-1,\y) -- (7,\y);
}
\node[right] at (7,3) {$0$};
\node[right] at (7,4.5) {$\vdots$};
\node[right] at (7,6) {$0$};
%labels
\node[left] at (-1.5,6) {$x_N \rightarrow$};
\node[left] at (-1.5,4.5) {$\vdots$};
\node[left] at (-1.5,3) {$x_1 \rightarrow$};
\node[left] at (-1,6) {$0$};
\node[left] at (-1,4.5) {$\vdots$};
\node[left] at (-1,3) {$0$};
\node[below] at (6,7.75) {\tiny $M$};
\node[below] at (4,7.75) {\tiny $m_2(\mu)$};
\node[below] at (4,1.75) {\tiny $m_2(\mu)$};
\node[below] at (2,7.75) {$\cdots$};
\node[below] at (0,7.75) {\tiny $m_n(\mu)$};
\node[below] at (6,1.75) {\tiny $0$};
\node[below] at (2,1.75) {$\cdots$};
\node[below] at (0,1.75) {\tiny $M{+}m_n(\mu)$};
\node[below] at (5,7.75) {\tiny $m_1(\mu)$};
\node[below] at (5,1.75) {\tiny $m_1(\mu)$};
\end{tikzpicture}
\end{split}
\end{equation}
with $\mu = 1^{m_1(\mu)} 2^{m_2(\mu)} \dots$ as previously, and where the above partition function makes use of the vertex weights \eqref{eq:black-vertices}.
\end{theorem}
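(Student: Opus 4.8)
The plan is to reduce the statement to an identity of symmetric functions and then prove that identity in the spirit of Lemma~\ref{lem: Z formula}.

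First I would fix $\mu$ with $\mu_1\le n$ and analyse the partition function on the right of \eqref{eq:rectangle-P-colorblind}, separating off its rightmost column --- the one carrying $0$ at the bottom and $M$ at the top, which indexes part size $0$. Since no arrow crosses a horizontal boundary edge of the lattice, in this column every arrow arriving from the left is forced to turn upward (it cannot pass through to the right), so reading the weights \eqref{eq:black-vertices} up the column the occupation number climbs from $0$ to $M$, and no matter which $M$ of the $N$ rows carry a jump the weight of this column is the fixed constant $(1-t)(1-t^2)\cdots(1-t^M)=(t;t)_M$. The remaining columns, indexing part sizes $1,\dots,n$, form a lattice whose bottom boundary encodes $(n^M,\mu)$ and whose top boundary encodes $\mu$, with at most one arrow exiting to the right in each row; applying Lemma~\ref{lem: boson HL} row by row and iterating the branching rule identifies this partition function with $P_{(n^M,\mu)/\mu}(x_1,\dots,x_N;0,t)$. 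Consequently the right side of \eqref{eq:rectangle-P-colorblind} equals $(t;t)_M\sum_{\mu:\mu_1\le n}q^{|\mu|}P_{(n^M,\mu)/\mu}(x;0,t)$, and Theorem~\ref{thm: macdonald vtx model} becomes equivalent to the symmetric function identity
\[
(q;q)_n\sum_{\mu:\mu_1\le n}q^{|\mu|}\,P_{(n^M,\mu)/\mu}(x_1,\dots,x_N;0,t)=P_{n^M}(x_1,\dots,x_N;q,t).
\]

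To prove this identity I would mimic the computation in the proof of Lemma~\ref{lem: Z formula}. Every column of the skew diagram $(n^M,\mu)/\mu$ has exactly $M$ cells, so the left side is a $q$-weighted cylindric Hall--Littlewood sum; using the Macdonald involution and Lemma~\ref{lem: HL int formula} one writes each $P_{(n^M,\mu)/\mu}(x;0,t)$ as a contour integral, peels the rectangle $n^M$ off of $(n^M,\mu)$ --- cleanest after passing to conjugates, where $(n^M,\mu)'=\mu'+M^n$ and one can use $P_{\nu+M^n}=(z_1\cdots z_n)^M P_\nu$ in $n$ variables --- and then carries out the sum over $\mu$ by the Hall--Littlewood Cauchy identity, the step that produces the winding-generating factor exactly as in the proof of Lemma~\ref{lem: Z formula}. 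One is then left with a single contour integral whose integrand carries both $t$ (from the Hall--Littlewood Cauchy kernel) and $q$ (produced by the winding sum), and I would match it with $\tfrac{1}{(q;q)_n}P_{n^M}(x;q,t)$ via the contour-integral representation of $P_{n^M}$ coming from Macdonald orthogonality, which is unusually simple here because a rectangular Macdonald polynomial evaluated at as many variables as it has rows is just the monomial: $P_{n^M}(w_1,\dots,w_M;q,t)=(w_1\cdots w_M)^n$.

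The reduction to symmetric functions is routine bookkeeping with Lemma~\ref{lem: boson HL}; the real obstacle is the symmetric function identity, which is precisely the sought $t$-generalization of Proposition~\ref{prop: vtx model q-whit} and identifies a $q$-weighted cylindric Hall--Littlewood sum with a rectangular Macdonald polynomial. The delicate point is the final matching of contour integrals: one must verify that the parameter $q$ introduced by the winding weight coincides with the Macdonald parameter appearing in $P_{n^M}(x;q,t)$, and that all the $q$- and $t$-Pochhammer prefactors assemble exactly into the constant $\tfrac{(t;t)_M}{(q;q)_n}$. Alternatively, the identity can be deduced from the known identification of cylindric Hall--Littlewood functions with specialised Macdonald functions of rectangular shape.
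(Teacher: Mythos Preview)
Your reduction of the partition function to the sum $(t;t)_M\sum_{\mu:\mu_1\le n}q^{|\mu|}P_{(n^M,\mu)/\mu}(x;0,t)$ is correct and elegant: the rightmost column does contribute the fixed factor $(t;t)_M$, and Lemma~\ref{lem: boson HL} together with the branching rule identifies the rest with the skew Hall--Littlewood polynomial as you say. So the theorem really is equivalent to the cylindric identity
\[
(q;q)_n\sum_{\mu:\mu_1\le n}q^{|\mu|}\,P_{(n^M,\mu)/\mu}(x;0,t)=P_{n^M}(x;q,t),
\]
and this is a genuinely different route from the paper. The paper never isolates this symmetric-function identity; instead it introduces a \emph{colored} bosonic partition function $\mathfrak{P}_{n,M}$, shows it lies in the Hecke orbit of a non-symmetric Macdonald polynomial and is symmetric (hence proportional to $P_{n^M}$), and then proves a new periodic color-blindness lemma (Lemma~\ref{lem:columns}) matching $\mathfrak{P}_{n,M}$ column-by-column with the uncolored partition function. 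Your approach trades that vertex-model machinery for a single symmetric-function equality.

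The gap is in your proof of that equality. The contour-integral scheme you outline, modelled on Lemma~\ref{lem: Z formula}, does not go through as written: Lemma~\ref{lem: HL int formula} requires the number $n$ of integration variables to bound $l((n^M,\mu))=M+l(\mu)$, but $l(\mu)$ is unbounded in the sum, so one cannot fix a finite contour integral uniformly. Passing to conjugates sends $P_{(n^M,\mu)/\mu}(x;0,t)$ under $\omega_{0,t}$ to a $q$-Whittaker object $Q_{(\mu'+M^n)/\mu'}(x;t,0)$, for which Lemma~\ref{lem: HL int formula} does not apply; and the step ``match with $\tfrac{1}{(q;q)_n}P_{n^M}(x;q,t)$ via Macdonald orthogonality'' is only simple when the number of $x$-variables equals $M$, whereas the theorem is stated for $N\ge M$ variables. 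Your fallback --- invoking the known identification of cylindric Hall--Littlewood functions with rectangular Macdonald polynomials --- is valid, but then the content of the theorem is entirely imported from that external result, which is precisely what the paper's self-contained colored-vertex-model argument avoids.
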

\begin{proof}
We begin with a vertex model representation of the {\it non-symmetric Macdonald polynomials}, obtained in \cite{BW22}. We shall apply to this formula a symmetrization procedure analogous to the one performed in \cite{ABW21} (which studied a fermionic vertex model rather than the bosonic model we consider now); in particular, our proof is similar to that of Theorem 15.1.2 in \cite{ABW21}.

Since the notation and conventions in \cite{BW22} differ slightly from ours, we first introduce the vertex model studied there. We define a colored vertex model with weights given by\footnote{This model is a direct transcription of equation (3.7) of \cite{BW22}.}
\begin{equation}
\label{eq:colored conj-weights}
\begin{array}{ccc}
\begin{tikzpicture}[scale=0.8,>=stealth]
\draw[lblue,ultra thick] (-1,0) -- (1,0);
\draw[lblue,line width=10pt] (0,-1) -- (0,1);
\node[below] at (0,-1) {$I$};
\draw[ultra thick,->,rounded corners] (-0.075,-1) -- (-0.075,1);
\draw[ultra thick,->,rounded corners] (0.075,-1) -- (0.075,1);
\node[above] at (0,1) {$I$};
\end{tikzpicture}
\qquad\qquad\qquad
&
\begin{tikzpicture}[scale=0.8,>=stealth]
\draw[lblue,ultra thick] (-1,0) -- (1,0);
\draw[lblue,line width=10pt] (0,-1) -- (0,1);
\node[below] at (0,-1) {$I$};
\draw[ultra thick,->,rounded corners] (-1,0)node[left]{$i$} -- (-0.15,0) -- (-0.15,1);
\draw[ultra thick,->,rounded corners] (0,-1) -- (0,1);
\draw[ultra thick,->,rounded corners] (0.15,-1) -- (0.15,0) -- (1,0)node[right]{$i$};
\node[above] at (0,1) {$I$};
\end{tikzpicture}
\qquad\qquad\qquad
&
\begin{tikzpicture}[scale=0.8,>=stealth]
\draw[lblue,ultra thick] (-1,0) -- (1,0);
\draw[lblue,line width=10pt] (0,-1) -- (0,1);
\node[below] at (0,-1) {$I$};
\draw[ultra thick,->,rounded corners] (-0.075,-1) -- (-0.075,1);
\draw[ultra thick,->,rounded corners] (0.075,-1) -- (0.075,0) -- (1,0)node[right]{$i$};
\node[above] at (0,1) {$I_i^-$};
\end{tikzpicture}
\\
1\qquad\qquad\qquad

&
xt^{I_{[i+1,N]}}\qquad\qquad\qquad

&
x(1-t^{I_i})t^{I_{[i+1,N]}}
\\[15pt]
\begin{tikzpicture}[scale=0.8,>=stealth]
\draw[lblue,ultra thick] (-1,0) -- (1,0);
\draw[lblue,line width=10pt] (0,-1) -- (0,1);
\node[below] at (0,-1) {$I$};
\draw[ultra thick,->,rounded corners] (-1,0)node[left]{$i$} -- (-0.15,0) -- (-0.15,1);
\draw[ultra thick,->,rounded corners] (0,-1) -- (0,1);
\draw[ultra thick,->,rounded corners] (0.15,-1) -- (0.15,1);
\node[above] at (0,1) {$I_i^+$};
\end{tikzpicture}
\qquad\qquad\qquad
&
\begin{tikzpicture}[scale=0.8,>=stealth]
\draw[lblue,ultra thick] (-1,0) -- (1,0);
\draw[lblue,line width=10pt] (0,-1) -- (0,1);
\node[below] at (0,-1) {$I$};
\draw[ultra thick,->,rounded corners] (-1,0)node[left]{$i$} -- (-0.15,0) -- (-0.15,1);
\draw[ultra thick,->,rounded corners] (0,-1) -- (0,1);
\draw[ultra thick,->,rounded corners] (0.15,-1) -- (0.15,0) -- (1,0)node[right]{$j$};
\node[above] at (0,1) {$I_{ij}^{+-}$};
\end{tikzpicture}
\qquad\qquad\qquad
&
\begin{tikzpicture}[scale=0.8,>=stealth]
\draw[lblue,ultra thick] (-1,0) -- (1,0);
\draw[lblue,line width=10pt] (0,-1) -- (0,1);
\node[below] at (0,-1) {$I$};
\draw[ultra thick,->,rounded corners] (-1,0)node[left]{$j$} -- (-0.15,0) -- (-0.15,1);
\draw[ultra thick,->,rounded corners] (0,-1) -- (0,1);
\draw[ultra thick,->,rounded corners] (0.15,-1) -- (0.15,0) -- (1,0)node[right]{$i$};
\node[above] at (0,1) {$I_{ji}^{+-}$};
\end{tikzpicture}
\\
1\qquad\qquad\qquad
&
x(1-t^{I_j})t^{I_{[j+1,N]}}\qquad\qquad\qquad
&
0
\end{array}
\end{equation}
where it is assumed that $1 \leq i < j \leq N$. Here $I=(I_1,\dotsc, I_N)$ is a vector with $I_i$ denoting the number of arrows of color $i$ present at the bottom-incoming edge of the vertex, and $I_{S}=\sum_{i\in S} I_i$ for any set $S \subseteq [1,N]$. Using this model, for any permutation $\rho \in \mathfrak{S}_N$ we define the function
\begin{equation}
\label{eq:f-rho}
\begin{split}
    f^\rho_{(n^M,0^{N-M})}(x_1,\dotsc, x_N)=\begin{tikzpicture}[scale=0.8,baseline=(current bounding box.center),>=stealth]
%lattice
\foreach\x in {0,...,6}{
\draw[lblue,line width=10pt] (\x,2) -- (\x,7);
}
\foreach\y in {3,...,6}{
\draw[lblue,thick] (-1,\y) -- (7,\y);
}
\node[right] at (7,3) {$0$};
\node[right] at (7,4.5) {$\vdots$};
\node[right] at (7,6) {$0$};
\draw[ultra thick,->] (-1,3) -- (0,3); \draw[ultra thick,->] (-1,4) -- (0,4); \draw[ultra thick,->] (-1,5) -- (0,5);\draw[ultra thick,->] (-1,6) -- (0,6);
%labels
\node[left] at (-2,6) {$x_N \rightarrow$};
\node[left] at (-2,4.5) {$\vdots$};
\node[left] at (-2,3) {$x_1 \rightarrow$};
\node[left] at (-1,6) {\tiny $\rho(N)$};
\node[left] at (-1,4.5) {$\vdots$};
\node[left] at (-1,3) {\tiny $\rho(1)$};
\node[above] at (6,7.75) {\tiny $[1,M]$};
\node[above] at (0,7.75) {\tiny $[M{+}1,N]$};
\draw[line width=6pt, ->] (0,6.25) -- (0,7.5);
\draw[line width=6pt, ->] (6,6.25) -- (6,7.5);
\node[below] at (0,2) {$\spadesuit$};
\node[below] at (1,2) {$\clubsuit$};
\end{tikzpicture}
\end{split}
\end{equation}
where color $\rho(i)$ enters the lattice via the left-incoming edge of row $i$ for all $1 \leq i \leq N$. In this partition function there are $n+1$ columns in total, labelled from left to right as $0$ up to $n$. For clarity, in this and all other pictures we shall indicate the $0$th column by the symbol $\spadesuit$ and the $1$st column by $\clubsuit$. Colors $M+1,\dots,N$ exit the lattice via the top-outgoing edge of the $0$th column, while colors $1,\dotsc,M$ can wind arbitrarily many times in any column except the $n$th one (where they exit the lattice), with a fugacity of $q^{n-i}$ associated to such a winding in the $i$th column. In the special case $\rho={\rm id}$, this is a non-symmetric Macdonald polynomial (see Theorem 4.2 of \cite{BW22}) and one can obtain generic $f^{\rho}_{(n^M,0^{N-M})}$ from $f^{\rm id}_{(n^M,0^{N-M})}$ under successive action of generators of the Hecke algebra. 

Notice that in the $0$th column of this partition function, the colors $1,\dotsc, M$ must exit via the same row that they enter, possibly after winding, due to the weight $0$ assigned to a vertex with a larger color entering on the left than exiting on the right; see the bottom-right vertex in \eqref{eq:colored conj-weights}. We can then peel away the weight of the $0$th column in a deterministic way, leading to the equation 
\begin{equation}
\label{eq:f-rho2}
    f^\rho_{(n^M,0^{N-M})}(x_1,\dotsc, x_N)
    =
    C_{\rho}(q,t) \prod_{i=1}^{M} x_{\rho^{-1}(i)} \times 
    \begin{tikzpicture}[scale=0.8,baseline=(current bounding box.center),>=stealth]
%lattice
\foreach\x in {1,...,6}{
\draw[lblue,line width=10pt] (\x,2) -- (\x,7);
}
\foreach\y in {3,...,6}{
\draw[lblue,thick] (0,\y) -- (7,\y);
}
\node[right] at (7,3) {$0$};
\node[right] at (7,4.5) {$\vdots$};
\node[right] at (7,6) {$0$};
\draw[ultra thick,->] (0,3) -- (1,3); \draw[ultra thick,->] (0,4) -- (1,4); \draw[ultra thick,->] (0,5) -- (1,5);\draw[ultra thick,->] (0,6) -- (1,6);
%labels
\node[left] at (-1,6) {$x_N \rightarrow$};
\node[left] at (-1,4.5) {$\vdots$};
\node[left] at (-1,3) {$x_1 \rightarrow$};
\node[left] at (0,6) {\tiny $\rho'(N)$};
\node[left] at (0,4.5) {$\vdots$};
\node[left] at (0,3) {\tiny $\rho'(1)$};
\node[above] at (6,7.75) {\tiny $[1,M]$};
\draw[line width=6pt, ->] (6,6.25) -- (6,7.5);
\node[below] at (1,2) {$\clubsuit$};
\end{tikzpicture}
\end{equation}
where the multiplicative constant $C_{\rho}$ depends on $(q,t)$ but not on the variables $x_i$, and where $\rho'(i) = \rho(i)$ if $1 \leq \rho(i) \leq M$, with $\rho'(i)=0$ otherwise.

We then define the partition function
\begin{equation}
\label{eq:P-colored}
\begin{split}
\mathfrak{P}_{n,M}(x;q,t)
=
\begin{tikzpicture}[scale=0.8,baseline=(current bounding box.center),>=stealth]
%lattice
\foreach\x in {0,...,6}{
\draw[lblue,line width=10pt] (\x,2) -- (\x,7);
}
\foreach\y in {3,...,6}{
\draw[lblue,thick] (-1,\y) -- (7,\y);
}
\node[right] at (7,3) {$0$};
\node[right] at (7,4.5) {$\vdots$};
\node[right] at (7,6) {$0$};
%labels
\node[left] at (-2,6) {$x_N \rightarrow$};
\node[left] at (-2,4.5) {$\vdots$};
\node[left] at (-2,3) {$x_1 \rightarrow$};
\node[left] at (-1,6) {$0$};
\node[left] at (-1,4.5) {$\vdots$};
\node[left] at (-1,3) {$0$};
\node[above] at (6,7.5) {\tiny $[1,M]$};
\node[below] at (0,1.75) {\tiny $[1,M]$};
\draw[line width=6pt, ->] (6,6.25) -- (6,7.5);
\draw[line width=6pt, ->] (0,2-0.25) -- (0,2+0.5);
\node[below] at (0,1) {$\spadesuit$};
\node[below] at (1,1) {$\clubsuit$};
\end{tikzpicture}
\end{split}
\end{equation}
with the same boundary conditions as in \eqref{eq:f-rho}, except that only $M$ colored arrows enter the lattice, and all via the bottom-incoming edge of the $0$th column (winding may occur in any column except the $n$th). In a configuration of the lattice \eqref{eq:P-colored}, the $M$ colored arrows exit the $0$th column via some rows $k_1,\dots,k_M$ respectively; the resulting weight of the $0$th column is equal to $\prod_{i=1}^{M} x_{k_i}$ multiplied by a constant that is again independent of the variables $x_i$. Comparing \eqref{eq:f-rho2} and \eqref{eq:P-colored}, we conclude that
\begin{equation*}
\mathfrak{P}_{n,M}(x;q,t)
=
\sum_{\rho}
D_{\rho}(q,t)
f^\rho_{(n^M,0^{N-M})}(x_1,\dotsc, x_N)
\end{equation*}
for suitable coefficients $D_{\rho}(q,t)$, placing $\mathfrak{P}_{n,M}(x;q,t)$ in the Hecke orbit of the non-symmetric Macdonald polynomial $f^{\rm id}_{(n^M,0^{N-M})}(x_1,\dots,x_N)$. Moreover, by a standard Yang--Baxter argument, $\mathfrak{P}_{n,M}(x;q,t)$ is symmetric in the $x_i$. Up to a multiplicative constant, the only polynomial with these two properties is the symmetric Macdonald polynomial $P_{n^M}(x_1,\dotsc, x_N;q,t)$. We defer for the moment the explicit computation of the constant.

Now we evaluate $\mathfrak{P}_{n,M}(x;q,t)$ a second way. In particular, let $\mathfrak{Q}_{n,M}(x;q,t)$ denote the right hand side of \eqref{eq:rectangle-P-colorblind}, which uses the uncolored weights \eqref{eq:black-vertices}. We claim that
\begin{equation}
\label{eq:PQ-colorblind}
\mathfrak{P}_{n,M}(x;q,t)
=
E(q,t)
\mathfrak{Q}_{n,M}(x;q,t)
\end{equation}
where $E(q,t)$ is another multiplicative constant whose value is for the moment unimportant. The proof of \eqref{eq:PQ-colorblind} is via the following lemma, which is a direct $t$-generalization of Lemma \ref{lem: periodic schur color blind}:

\begin{lemma}
\label{lem:columns}
Fix integers $1 \leq M \leq N$ and two sets of coordinates $S=\{1 \leq s_1 < \cdots < s_M \leq N\}$, $R=\{1 \leq r_1 < \cdots < r_M \leq N\}$. One then has the following equality of single-column partition functions in the two models \eqref{eq:colored conj-weights} and \eqref{eq:black-vertices}:
\begin{equation}
\label{eq:column-colorblind}
\sum_{b_1,\dots,b_N}
\sum_{I=(I_1,\dots,I_M)}
q^{|I|}
\times
\begin{tikzpicture}[scale=0.8,baseline=(current bounding box.center),>=stealth]
\foreach\y in {-1,...,3}{
\draw[lblue,thick] (0.5,\y) -- (1.5,\y);
}
\draw[lblue,line width=10pt] (1,-2) -- (1,4);
%bottom labels
\node[below] at (1,-2) {$I$};
%top labels
\node[above] at (1,4) {$I$};
%left labels
\node at (-1.5,-1) {$x_1 \rightarrow $};
\node at (-1.5,3) {$x_N \rightarrow $};
\node[left] at (0.5,-1) {$a_1$};
\node[left] at (0.5,0.2) {$\vdots$};
\node[left] at (0.5,1) {$a_i$};
\node[left] at (0.5,2.2) {$\vdots$};
\node[left] at (0.5,3) {$a_N$};
%right labels
\node[right] at (1.5,-1) {$b_1$};
\node[right] at (1.5,0.2) {$\vdots$};
\node[right] at (1.5,1) {$b_i$};
\node[right] at (1.5,2.2) {$\vdots$};
\node[right] at (1.5,3) {$b_N$};
\end{tikzpicture}
=
\frac{1}{(qt;t)_{M-1}}
\sum_{m} q^m
\times
\begin{tikzpicture}[scale=0.8,baseline=(current bounding box.center),>=stealth]
\foreach\y in {-1,...,3}{
\draw[lgray,thick] (0.5,\y) -- (1.5,\y);
}
\draw[lgray,line width=10pt] (1,-2) -- (1,4);
%bottom labels
\node[below] at (1,-2) {$m$};
%top labels
\node[above] at (1,4) {$m$};
%left labels
\node at (-1.5,-1) {$x_1 \rightarrow $};
\node at (-1.5,3) {$x_N \rightarrow $};
\node[left] at (0.5,-1) {$\mathbf{1}_{1 \in S}$};
\node[left] at (0.5,0.2) {$\vdots$};
\node[left] at (0.5,1) {$\mathbf{1}_{i \in S}$};
\node[left] at (0.5,2.2) {$\vdots$};
\node[left] at (0.5,3) {$\mathbf{1}_{N \in S}$};
%right labels
\node[right] at (1.5,-1) {$\mathbf{1}_{1 \in R}$};
\node[right] at (1.5,0.2) {$\vdots$};
\node[right] at (1.5,1) {$\mathbf{1}_{i \in R}$};
\node[right] at (1.5,2.2) {$\vdots$};
\node[right] at (1.5,3) {$\mathbf{1}_{N \in R}$};
\end{tikzpicture}
\end{equation}
where on the left hand side, $(a_1,\dots,a_N)$ is any vector in $\{0,1,\dots,M\}^N$ whose non-zero elements are pairwise distinct and satisfying $a_{s_i} \not= 0$ for all $1 \leq i \leq M$; similarly, $(b_1,\dots,b_N)$ is summed over all vectors in $\{0,1,\dots,M\}^N$ whose non-zero elements are pairwise distinct and satisfying $b_{r_i} \not= 0$ for all $1 \leq i \leq M$. On the right hand side, an arrow enters the column via each horizontal edge $\{s_1<\cdots<s_M\}$; the arrows leave the column via horizontal edges $\{r_1<\cdots<r_M\}$. The index $I$ is summed over all vectors in $\mathbb{Z}_{\geq 0}^M$, and $m$ over all non-negative integers.
\end{lemma}
\begin{proof}
We shall provide a sketch of the proof, omitting purely technical details. We begin by establishing Lemma \ref{lem:columns} in the case that $s_i=r_i=i$ for all $1 \leq i \leq M$. Let us firstly examine the left hand side of \eqref{eq:column-colorblind}. In this case, we see that each color must exit in the row it entered, as a smaller color cannot exit a row that a bigger color enters, due to the vanishing of the bottom-right vertex in \eqref{eq:colored conj-weights}. Thus, the sum over the $(b_1,\dots,b_N)$ collapses to a single term, and each time color $i$ winds, it contributes a factor of $t^{i-1}q$. We can then compute the left hand side as
\begin{equation*}
    \prod _{i=1}^{M} x_i \sum_{I_1,\dotsc, I_M}\prod_{i=1}^{M}\left(t^{i-1}q\right)^{I_{i}}=\prod _{i=1}^{M} x_i \prod_{i=1}^M \frac{1}{1-qt^{i-1}}.
\end{equation*}
On the other hand, it is easily seen that on the right hand side of \eqref{eq:column-colorblind}, the vertex model contributes a factor of $\prod_{i=1}^{M} x_i$ and the winding a factor of $\frac{1}{1-q}$, so the two sides are equal.

More generally, one can prove that the two sides of \eqref{eq:column-colorblind} are equal for $s_i=i$, for all $1 \leq i \leq M$ but with $\{r_1,\dots,r_M\}$ kept arbitrary. This makes use of explicit formulas for the left and right hand sides of \eqref{eq:column-colorblind}; these are provided via Theorem 4.13 of \cite{BW22} and Section 8 of \cite{CdGW15}, respectively.

The final tool which is required is the Yang--Baxter equation for the vertex models on the two sides; see Proposition 3.4 of \cite{BW22} and Theorem 5.1 of \cite{BBW16}. The key is that the same $R$-matrix appears as the intertwiner of both sides of \eqref{eq:column-colorblind}, being simply the $R$-matrix corresponding to the six vertex model weights defined in Figure \ref{fig:vtx wts}.\footnote{On the left hand side, the fact that the (uncolored) six vertex model $R$-matrix acts as an intertwiner is due to the summation over right-outgoing colors, which induces {\it color blindness} of the colored $R$-matrix.} Making use of the Yang--Baxter exchange relations, is then possible to inductively relate the case of arbitrary $\{s_1,\dots,s_M\}$ and $\{r_1,\dots,r_M\}$ to the known case $s_i=i$, for all $1 \leq i \leq M$ and $\{r_1,\dots,r_M\}$ arbitrary.

\end{proof}

\begin{remark}
To the best of our knowledge, Lemma \ref{lem:columns} is the first example of a colour-blindness phenomenon in a system with periodicity; normally such statements require free summation over top and right edges of a lattice, which is not the case in \eqref{eq:column-colorblind}.
\end{remark}

Equipped with the colour-blindness property \eqref{eq:column-colorblind} (and an analogous version that applies to the extremal columns of our partition functions), one is then able to prove \eqref{eq:PQ-colorblind}, starting from the $n$th column of each partition function and working iteratively towards the $0$th column. This establishes that $\mathfrak{Q}_{n,M}(x;q,t)$ is equal to $P_{n^M}(x_1,\dots,x_N;q,t)$ up to a scalar factor.

Finally, we need to address the overall multiplicative constant. Since the Macdonald polynomials are monic with respect to their expansion over the monomial basis, to determine the constant it suffices to compute the coefficient of the leading monomial $\prod_{i=1}^{M} x_i^n$ within the right hand side of \eqref{eq:rectangle-P-colorblind}. The only lattice configuration which contributes to this monomial is the one where arrows exit the $0$th column in the lowest $M$ rows, and then travel rightward until they arrive at the final column, with potential winding in each intermediate column. The windings contribute a factor of $\frac{1}{1-q^j}$ in column $n-j$, for all $1 \leq j \leq n$. Additionally, there is a factor of $(t;t)_M$ coming from the final column. Since the coefficient of $\prod_{i=1}^{M} x_i^n$ in $P_{n^M}(x_1,\dots,x_N;q,t)$ is $1$, we immediately read off the left hand side of \eqref{eq:rectangle-P-colorblind}.

\end{proof}

\begin{proposition}
\label{prop: PHL Macdonald evaluation}
Fix two alphabets $a=(a_1,\dots,a_M)$ and $b=(b_1,\dots,b_N)$, and let $(a,b^{-1})$ denote the combined alphabet $(a_1,\dots,a_M,b^{-1}_1,\dots,b^{-1}_N)$. We have that
\begin{equation}
\label{eq:HL to Macdonald}
    \sum_{\lambda,\mu:\lambda_1\leq n}\frac{(t;t)_{m_n(\mu)}}{(t;t)_{m_n(\lambda)}}u^{|\mu|}P_{\lambda/\mu}(a;0,t)Q_{\lambda/\mu}(b;0,t)=\frac{1}{(u;u)_n}\prod_{j=1}^N b_j^n P_{n^N}(a,b^{-1};u,t),
\end{equation}
where the left hand side features skew Hall--Littlewood polynomials, and the right hand side is a Macdonald polynomial in parameters $(u,t)$. At $t=0$, this reduces to Proposition \ref{prop: vtx model periodic schur} (with $q \mapsto u$, $M \leftrightarrow N$ and $a \leftrightarrow b$).
\end{proposition}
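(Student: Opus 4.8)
The plan is to prove \eqref{eq:HL to Macdonald} by the vertex-model method, mirroring the strategy used for Propositions \ref{prop: vtx model periodic schur} and \ref{prop: vtx model q-whit}: realise both sides as deformed-boson partition functions and compare column-by-column and row-by-row. First I would apply Theorem \ref{thm: macdonald vtx model} with rectangular shape $n^N$, Macdonald parameters $(u,t)$, and the combined alphabet $(a_1,\dots,a_M,b_1^{-1},\dots,b_N^{-1})$ of length $M+N$. This rewrites $\tfrac{(t;t)_N}{(u;u)_n}P_{n^N}(a,b^{-1};u,t)$ as the cylindrical partition function $\mathfrak{Q}_{n,N}(a,b^{-1};u,t)$ built from the weights \eqref{eq:black-vertices}, with $M$ rows carrying rapidities $a_i$ (at the bottom) and $N$ rows carrying rapidities $b_j^{-1}$ (at the top), $N$ arrows injected at the bottom of the $0$th column, windings penalised by $u^{|\mu|}$, and $\mu$ (with $\mu_1\le n$) recording the occupation numbers of the bulk columns. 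Since the right-hand side of \eqref{eq:HL to Macdonald} is $\tfrac1{(u;u)_n}\prod_j b_j^n P_{n^N}(a,b^{-1};u,t)=\tfrac{\prod_j b_j^n}{(t;t)_N}\sum_{\mu:\mu_1\le n}u^{|\mu|}\mathfrak{Q}_{n,N}^{\mu}(a,b^{-1};u,t)$, where $\mathfrak{Q}_{n,N}^\mu$ is the summand, it suffices to identify $\mathfrak{Q}_{n,N}^\mu(a,b^{-1};u,t)$ with $\tfrac{(t;t)_N}{\prod_j b_j^n}\sum_{\lambda:\lambda_1\le n}\tfrac{(t;t)_{m_n(\mu)}}{(t;t)_{m_n(\lambda)}}P_{\lambda/\mu}(a;0,t)Q_{\lambda/\mu}(b;0,t)$.

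To do this I would slice $\mathfrak{Q}_{n,N}^{\mu}$ along the horizontal line separating the $M$ rows with rapidities $a_i$ from the $N$ rows with rapidities $b_j^{-1}$, and let $\lambda$ (with $\lambda_1\le n$) record the column occupations along the slice. By Lemma \ref{lem: boson HL} together with the branching rule, summing over the internal horizontal edges, the $a$-block evaluates to $P_{\lambda/\mu}(a;0,t)$. The $b^{-1}$-block joins the slice data $\lambda$ to the data of the exiting arrows at the top of the lattice, which, because $N$ arrows have all migrated to column $n$ while the bulk columns retain the $m_k(\mu)$, is exactly the shape $(n^N,\mu)$ (using $\mu_1\le n$); again by Lemma \ref{lem: boson HL} and branching it evaluates to a skew Hall--Littlewood polynomial of the form $P_{(n^N,\mu)/\lambda}(b^{-1};0,t)$ — this is precisely the configuration that appears in Proposition \ref{prop: complement}. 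The heart of the argument is then to invoke the Hall--Littlewood analogue of the complementation identity of Proposition \ref{prop: complement} (obtained by conjugating partitions with the Macdonald involution $\omega$), which rewrites $P_{(n^N,\mu)/\lambda}(b^{-1};0,t)\prod_j b_j^n$ as a constant multiple of $Q_{\lambda/\mu}(b;0,t)$, the constant being $\tfrac{b_{(n^N,\mu)}(0,t)}{b_\lambda(0,t)}$ up to an explicit prefactor; since $b_\nu(0,t)=\prod_{i\ge1}(t;t)_{m_i(\nu)}$, the ratio $\tfrac{(t;t)_{m_n(\mu)}}{(t;t)_{m_n(\lambda)}}$ (and the factor $(t;t)_N$ from the terminal column of $\mathfrak{Q}_{n,N}$) is exactly what is produced. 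Reassembling the two blocks and dividing by the overall constants gives \eqref{eq:HL to Macdonald}.

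The main obstacle is the bookkeeping in this last step: one must track carefully which powers of the $b_j$, which copies of $(t;t)_N$ and $(t;t)_{m_n(\,\cdot\,)}$, and which $b_\nu(0,t)$-factors relating $P$ to $Q$ are produced by the slicing versus by the complementation, handling the extremal columns $0$ and $n$ of $\mathfrak{Q}_{n,N}$ separately (as in Lemma \ref{lem:columns}), and checking that they combine to leave precisely $\tfrac1{(u;u)_n}\prod_j b_j^n$ on the right and $\tfrac{(t;t)_{m_n(\mu)}}{(t;t)_{m_n(\lambda)}}$ on the left. As a consistency check, at $t=0$ every $(t;t)$-factor trivialises, the Hall--Littlewood polynomials become Schur functions, and \eqref{eq:HL to Macdonald} collapses to Proposition \ref{prop: vtx model periodic schur} with $q\mapsto u$ and the alphabets and indices swapped, $a\leftrightarrow b$, $M\leftrightarrow N$; alternatively, one can avoid the vertex models altogether and run the branching-rule-plus-complementation argument indicated in the remark following Proposition \ref{prop: vtx model q-whit} directly on the left-hand side of \eqref{eq:HL to Macdonald}, using the $t$-generalisation of Proposition \ref{prop: complement}.
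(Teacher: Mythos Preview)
Your strategy is essentially the paper's proof run in reverse, and it works, but the ``Hall--Littlewood complementation'' step is where you diverge from (and slightly overcomplicate) the paper's argument.

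The paper starts from the left hand side, writes it as a mixed lattice with $M$ grey rows (rapidities $a_i$) above $N$ red rows (rapidities $b_j$) via Lemma~\ref{lem: boson HL}, and then makes the one-line observation that the red weights \eqref{eq:red-vertices} with rapidity $b$ coincide with the grey weights \eqref{eq:black-vertices} with rapidity $b^{-1}$ after multiplying each vertex by $b$. This converts the whole picture into a single-colour (grey) lattice in the alphabet $(a,b^{-1})$, at the cost of exactly $\prod_j b_j^n$; moving the $N$ incoming arrows from the left edges to the bottom of the $0$th column accounts precisely for the factor $(t;t)_{m_n(\mu)}/(t;t)_{m_n(\lambda)}$, and appending the terminal column supplies $(t;t)_N$. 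One then reads off Theorem~\ref{thm: macdonald vtx model}. No symmetric-function complementation identity is needed.

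Your proposal instead extracts the $b^{-1}$-block as $P_{(n^N,\mu)/\lambda}(b^{-1};0,t)$ and then invokes a Hall--Littlewood analogue of Proposition~\ref{prop: complement}. Such an identity does hold, but your suggested derivation---applying the involution $\omega_{q,0}$ to Proposition~\ref{prop: complement}---is not straightforward: that proposition is stated for a \emph{finite} alphabet and involves inverted variables $x^{-1}$ and the monomial $\prod_i x_i^n$, none of which transform cleanly under $\omega$ (which acts on power sums in an infinite alphabet). The cleanest proof of the HL complementation you need is precisely the red$\leftrightarrow$grey weight identity above, so you end up rediscovering the paper's key step after a detour. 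The remaining bookkeeping you flag (the extremal columns, the $(t;t)$ factors) is handled in the paper by the two moves just described: re-routing the $N$ arrows through the bottom of column $0$, and adjoining the final column.
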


\begin{proof}
The left hand side of \eqref{eq:HL to Macdonald} is equal to
\begin{equation}
\label{eq:hl-macdonald-1}
\sum_{\mu,\lambda:\lambda_1\leq n}\frac{(t;t)_{m_n(\mu)}}{(t;t)_{m_n(\lambda)}}u^{|\mu|}
\times
\begin{tikzpicture}[scale=0.8,baseline=(current bounding box.center),>=stealth]
%lattice
\foreach\x in {0,...,6}{
\draw[lgray,line width=10pt] (\x,2) -- (\x,7);
}
\foreach\y in {3,...,6}{
\draw[lgray,thick] (-1,\y) -- (7,\y);
}
\foreach\x in {0,...,6}{
\draw[lred,line width=10pt] (\x,-2) -- (\x,2);
}
\foreach\y in {-1,...,1}{
\draw[lred,thick] (-1,\y) -- (7,\y);
}
\draw[ultra thick,->] (-1,-1) -- (0,-1); \draw[ultra thick,->] (-1,0) -- (0,0); \draw[ultra thick,->] (-1,1) -- (0,1);
\draw[ultra thick,->] (6,1) -- (7,1); \draw[ultra thick,->] (6,4) -- (7,4); \draw[ultra thick,->] (6,6) -- (7,6);
\node[right] at (7,-1) {$S_2(1)$}; 
\node[right] at (7,0.2) {$\vdots$}; 
\node[right] at (7,1) {$S_2(N)$};
\node[right] at (7,3) {$S_1(M)$};
\node[right] at (7,4.5) {$\vdots$};
\node[right] at (7,6) {$S_1(1)$};
%labels
\node[left] at (-1,6) {$a_1$};
\node[left] at (-1,4.5) {$\vdots$};
\node[left] at (-1,3) {$a_M$};
\node[left] at (-1,1) {$b_N$};
\node[left] at (-1,0.2) {$\vdots$};
\node[left] at (-1,-1) {$b_1$};
\node[below] at (6,-2) {\tiny $m_1(\mu)$};
\node[below] at (5,-2) {\tiny $m_2(\mu)$};
\node[below] at (4,-2) {\tiny $m_3(\mu)$};
\node[below] at (6,7.75) {\tiny $m_1(\mu)$};
\node[below] at (5,7.75) {\tiny $m_2(\mu)$};
\node[below] at (4,7.75) {\tiny $m_3(\mu)$};
\node[below] at (6,2.5) {\tiny $m_1(\lambda)$};
\node[below] at (5,2.5) {\tiny $m_2(\lambda)$};
\node[below] at (4,2.5) {\tiny $m_3(\lambda)$};
\node[below] at (0,2.5) {\tiny $m_n(\lambda)$};
\node[below] at (3,7.75) {$\cdots$};
\node[below] at (0,7.75) {\tiny $m_n(\mu)$};
\node[below] at (3,-2) {$\cdots$};
\node[below] at (0,-2) {\tiny $m_n(\mu)$};
\end{tikzpicture}
\end{equation}
where the lattice consists of only $n$ columns, since $\lambda_1\leq n$, and where $S_1(1),\dots,S_1(M)$ and $S_2(1),\dots,S_2(N)$ are freely summed over $\{0,1\}$. Requiring that the $N$ left-incoming arrows in \eqref{eq:hl-macdonald-1} instead enter via the bottom-incoming edge of the $0$th column, we find that our expression is equal to
\begin{equation}
\label{eq:hl-macdonald-2}
\sum_{\mu,\lambda:\lambda_1\leq n}u^{|\mu|}
\times
\begin{tikzpicture}[scale=0.8,baseline=(current bounding box.center),>=stealth]
%lattice
\foreach\x in {0,...,6}{
\draw[lgray,line width=10pt] (\x,2) -- (\x,7);
}
\foreach\y in {3,...,6}{
\draw[lgray,thick] (-1,\y) -- (7,\y);
}
\foreach\x in {0,...,6}{
\draw[lred,line width=10pt] (\x,-2) -- (\x,2);
}
\foreach\y in {-1,...,1}{
\draw[lred,thick] (-1,\y) -- (7,\y);
}
\draw[ultra thick,->] (6,1) -- (7,1); \draw[ultra thick,->] (6,4) -- (7,4); \draw[ultra thick,->] (6,6) -- (7,6);
\node[right] at (7,-1) {$S_2(1)$}; 
\node[right] at (7,0.2) {$\vdots$}; 
\node[right] at (7,1) {$S_2(N)$};
\node[right] at (7,3) {$S_1(M)$};
\node[right] at (7,4.5) {$\vdots$};
\node[right] at (7,6) {$S_1(1)$};
%labels
\node[left] at (-1,6) {$a_1$};
\node[left] at (-1,4.5) {$\vdots$};
\node[left] at (-1,3) {$a_M$};
\node[left] at (-1,1) {$b_N$};
\node[left] at (-1,0.2) {$\vdots$};
\node[left] at (-1,-1) {$b_1$};
\node[below] at (6,-2) {\tiny $m_1(\mu)$};
\node[below] at (5,-2) {\tiny $m_2(\mu)$};
\node[below] at (4,-2) {\tiny $m_3(\mu)$};
\node[below] at (6,7.75) {\tiny $m_1(\mu)$};
\node[below] at (5,7.75) {\tiny $m_2(\mu)$};
\node[below] at (4,7.75) {\tiny $m_3(\mu)$};
\node[below] at (6,2.5) {\tiny $m_1(\lambda)$};
\node[below] at (5,2.5) {\tiny $m_2(\lambda)$};
\node[below] at (4,2.5) {\tiny $m_3(\lambda)$};
\node[below] at (0,2.5) {\tiny $m_n(\lambda)$};
\node[below] at (3,7.75) {$\cdots$};
\node[below] at (0,7.75) {\tiny $m_n(\mu)$};
\node[below] at (3,-2) {$\cdots$};
\node[below] at (0,-2) {\tiny $N+m_n(\mu)$};
\end{tikzpicture}
\end{equation}
where the removal of the factor $\frac{(t;t)_{m_n(\mu)}}{(t;t)_{m_n(\lambda)}}$ from the summand of \eqref{eq:hl-macdonald-1} is due to $(1-t^k)$ factors that are present in the $0$th column of \eqref{eq:hl-macdonald-1}, but not that of \eqref{eq:hl-macdonald-2}.

Next, we use the fact that the weights of the grey and red portions of the lattice differ by inversion of the rapidities $b_j$ followed by multiplication by $\prod_{j=1}^N b_j^n$. Finally, we add a rightmost column collecting all the arrows into one top-outgoing edge, which changes the partition function by a factor of $(t;t)_N$, and we recover the right hand side of \eqref{eq:HL to Macdonald} after application of Theorem \ref{thm: macdonald vtx model}.
\end{proof}

\begin{remark}
By applying the Macdonald involution $\omega_{0,t}$ to the left hand side of \eqref{eq:HL to Macdonald}, and setting $t \mapsto q$, we map it to an expression that is structurally close to the distribution function of $l(\lambda)$ under the periodic $q$-Whittaker measure (again, there are extra multiplicative factors present in the summand, that prevent this from being an exact correspondence). Application of the involution to the right hand side of \eqref{eq:HL to Macdonald} may be carried out using known integral formulas for the Macdonald polynomials, giving rise to contour integral formulas of the same nature as Theorem \ref{thm: qt sym fn}. 
% However, there seem to be fewer connections between these observables and interesting observables in models of interest including particle systems, polymers, etc. 
\end{remark}

\section*{Acknowledgements}
The authors would like to thank Amol Aggarwal and Alexei Borodin for helpful discussions. MW was supported by ARC Future Fellowship FT200100981.

\bibliography{bibliography}{}

\begin{thebibliography}{10}

\bibitem{ABW21}
A.~Aggarwal, A.~Borodin, and M.~Wheeler.
\newblock Colored fermionic vertex models and symmetric functions, 2021.
\newblock \href {https://arxiv.org/abs/2101.01605} {\path{arXiv:2101.01605}}.

\bibitem{BL16}
J.~Baik and Z.~Liu.
\newblock T{ASEP} on a ring in sub-relaxation time scale.
\newblock {\em J. Stat. Phys.}, 165(6):1051--1085, 2016.
\newblock \href {https://doi.org/10.1007/s10955-016-1665-y}
  {\path{doi:10.1007/s10955-016-1665-y}}.

\bibitem{BL19}
J.~Baik and Z.~Liu.
\newblock Multipoint distribution of periodic {TASEP}.
\newblock {\em J. Amer. Math. Soc.}, 32(3):609--674, 2019.
\newblock \href {https://doi.org/10.1090/jams/915}
  {\path{doi:10.1090/jams/915}}.

\bibitem{BL21}
J.~Baik and Z.~Liu.
\newblock Periodic {TASEP} with general initial conditions.
\newblock {\em Probab. Theory Related Fields}, 179(3-4):1047--1144, 2021.
\newblock \href {https://doi.org/10.1007/s00440-020-01004-6}
  {\path{doi:10.1007/s00440-020-01004-6}}.

\bibitem{BBC20}
G.~Barraquand, A.~Borodin, and I.~Corwin.
\newblock Half-space {M}acdonald processes.
\newblock {\em Forum Math. Pi}, 8:e11, 150, 2020.
\newblock \href {https://doi.org/10.1017/fmp.2020.3}
  {\path{doi:10.1017/fmp.2020.3}}.

\bibitem{BBCW18}
G.~Barraquand, A.~Borodin, I.~Corwin, and M.~Wheeler.
\newblock Stochastic six-vertex model in a half-quadrant and half-line open
  asymmetric simple exclusion process.
\newblock {\em Duke Math. J.}, 167(13):2457--2529, 2018.
\newblock \href {https://doi.org/10.1215/00127094-2018-0019}
  {\path{doi:10.1215/00127094-2018-0019}}.

\bibitem{BBNV18}
D.~Betea, J.~Bouttier, P.~Nejjar, and M.~Vuleti\'{c}.
\newblock The free boundary {S}chur process and applications {I}.
\newblock {\em Ann. Henri Poincar\'{e}}, 19(12):3663--3742, 2018.
\newblock \href {https://doi.org/10.1007/s00023-018-0723-1}
  {\path{doi:10.1007/s00023-018-0723-1}}.

\bibitem{B07}
A.~Borodin.
\newblock Periodic {S}chur process and cylindric partitions.
\newblock {\em Duke Math. J.}, 140(3):391--468, 2007.
\newblock \href {https://doi.org/10.1215/S0012-7094-07-14031-6}
  {\path{doi:10.1215/S0012-7094-07-14031-6}}.

\bibitem{BBW16}
A.~Borodin, A.~Bufetov, and M.~Wheeler.
\newblock Between the stochastic six vertex model and {H}all-{L}ittlewood
  processes, 2016.
\newblock \href {https://arxiv.org/abs/1611.09486} {\path{arXiv:1611.09486}}.

\bibitem{BC14}
A.~Borodin and I.~Corwin.
\newblock Macdonald processes.
\newblock {\em Probab. Theory Related Fields}, 158(1-2):225--400, 2014.
\newblock \href {https://doi.org/10.1007/s00440-013-0482-3}
  {\path{doi:10.1007/s00440-013-0482-3}}.

\bibitem{BCF14}
A.~Borodin, I.~Corwin, and P.~Ferrari.
\newblock Free energy fluctuations for directed polymers in random media in
  {$1+1$} dimension.
\newblock {\em Comm. Pure Appl. Math.}, 67(7):1129--1214, 2014.
\newblock \href {https://doi.org/10.1002/cpa.21520}
  {\path{doi:10.1002/cpa.21520}}.

\bibitem{BCS14}
A.~Borodin, I.~Corwin, and T.~Sasamoto.
\newblock From duality to determinants for {$q$}-{TASEP} and {ASEP}.
\newblock {\em Ann. Probab.}, 42(6):2314--2382, 2014.
\newblock \href {https://doi.org/10.1214/13-AOP868}
  {\path{doi:10.1214/13-AOP868}}.

\bibitem{BW21}
A.~Borodin and M.~Wheeler.
\newblock Spin {$q$}-{W}hittaker polynomials.
\newblock {\em Adv. Math.}, 376:Paper No. 107449, 50, 2021.
\newblock \href {https://doi.org/10.1016/j.aim.2020.107449}
  {\path{doi:10.1016/j.aim.2020.107449}}.

\bibitem{BW22}
A.~Borodin and M.~Wheeler.
\newblock Nonsymmetric {M}acdonald polynomials via integrable vertex models.
\newblock {\em Trans. Amer. Math. Soc.}, 375(12):8353--8397, 2022.
\newblock \href {https://doi.org/10.1090/tran/8309}
  {\path{doi:10.1090/tran/8309}}.

\bibitem{CdGW15}
L.~Cantini, J.~de~Gier, and M.~Wheeler.
\newblock Matrix product formula for {M}acdonald polynomials.
\newblock {\em J. Phys. A}, 48(38):384001, 25, 2015.
\newblock \href {https://doi.org/10.1088/1751-8113/48/38/384001}
  {\path{doi:10.1088/1751-8113/48/38/384001}}.

\bibitem{CD18}
I.~Corwin and E.~Dimitrov.
\newblock Transversal fluctuations of the {ASEP}, stochastic six vertex model,
  and {H}all-{L}ittlewood {G}ibbsian line ensembles.
\newblock {\em Comm. Math. Phys.}, 363(2):435--501, 2018.
\newblock \href {https://doi.org/10.1007/s00220-018-3139-3}
  {\path{doi:10.1007/s00220-018-3139-3}}.

\bibitem{DGK23}
A.~Dunlap, Y.~Gu, and T.~Komorowski.
\newblock Fluctuation exponents of the {KPZ} equation on a large torus.
\newblock {\em Comm. Pure Appl. Math.}, 76(11):3104--3149, 2023.
\newblock \href {https://doi.org/10.1002/cpa.22110}
  {\path{doi:10.1002/cpa.22110}}.

\bibitem{GW20}
A.~Garbali and M.~Wheeler.
\newblock Modified {M}acdonald polynomials and integrability.
\newblock {\em Comm. Math. Phys.}, 374(3):1809--1876, 2020.
\newblock \href {https://doi.org/10.1007/s00220-020-03680-w}
  {\path{doi:10.1007/s00220-020-03680-w}}.

\bibitem{GK23b}
Y.~Gu and T.~Komorowski.
\newblock Fluctuations of the winding number of a directed polymer on a
  cylinder, 2023.
\newblock \href {https://arxiv.org/abs/2207.14091} {\path{arXiv:2207.14091}}.

\bibitem{GK23}
Y.~Gu and T.~Komorowski.
\newblock {KPZ} on torus: {G}aussian fluctuations, 2023.
\newblock \href {https://arxiv.org/abs/2104.13540} {\path{arXiv:2104.13540}}.

\bibitem{IMS21}
T.~Imamura, M.~Mucciconi, and T.~Sasamoto.
\newblock Skew {RSK} dynamics: Greene invariants, affine crystals and
  applications to $q$-{W}hittaker polynomials, 2021.
\newblock \href {https://doi.org/10.48550/ARXIV.2106.11922}
  {\path{doi:10.48550/ARXIV.2106.11922}}.

\bibitem{IMS22}
T.~Imamura, M.~Mucciconi, and T.~Sasamoto.
\newblock Solvable models in the {KPZ} class: approach through periodic and
  free boundary {S}chur measures, 2022.
\newblock \href {https://doi.org/10.48550/ARXIV.2204.08420}
  {\path{doi:10.48550/ARXIV.2204.08420}}.

\bibitem{K21}
S.~Koshida.
\newblock Free field theory and observables of periodic {M}acdonald processes.
\newblock {\em J. Combin. Theory Ser. A}, 182:Paper No. 105473, 42, 2021.
\newblock \href {https://doi.org/10.1016/j.jcta.2021.105473}
  {\path{doi:10.1016/j.jcta.2021.105473}}.

\bibitem{L18}
Z.~Liu.
\newblock Height fluctuations of stationary {TASEP} on a ring in relaxation
  time scale.
\newblock {\em Ann. Inst. Henri Poincar\'{e} Probab. Stat.}, 54(2):1031--1057,
  2018.
\newblock \href {https://doi.org/10.1214/17-AIHP831}
  {\path{doi:10.1214/17-AIHP831}}.

\bibitem{M79}
I.~G. Macdonald.
\newblock {\em Symmetric functions and {H}all polynomials}.
\newblock The Clarendon Press, Oxford University Press, New York, 1979.
\newblock Oxford Mathematical Monographs.

\bibitem{V12}
V.~Venkateswaran.
\newblock Vanishing integrals for {H}all--{L}ittlewood polynomials.
\newblock {\em Transformation Groups}, 17:259--302, 2012.
\newblock \href {https://doi.org/10.1007/s00031-012-9175-8}
  {\path{doi:10.1007/s00031-012-9175-8}}.

\end{thebibliography}
\bibliographystyle{abbrvurl}

\end{document}